\renewcommand\th@plain{\slshape}
\numberwithin{equation}{section}
\newtheoremstyle{plain}
  {-\topsep}
  {}
  {\slshape}
  {}
  {\sffamily\bfseries}
  {.}
  {.5em}
  {}
\theoremstyle{plain}
\newtheorem{theorem}{Theorem}[section]
\newtheorem{corollary}[theorem]{Corollary}
\newtheorem{lemma}[theorem]{Lemma}
\newtheorem{proposition}[theorem]{Proposition}
\newtheorem*{claim*}{Claim}
\newtheoremstyle{definition}
  {-\topsep}
  {}
  {\normalfont}
  {}
  {\sffamily\bfseries}
  {.}
  {.5em}
  {}
\theoremstyle{definition}
\newtheorem{remark}[theorem]{Remark}
\crefname{section}{Section}{Sections}
\crefname{theorem}{Theorem}{Theorems}
\crefname{corollary}{Corollary}{Corollaries}
\crefname{lemma}{Lemma}{Lemmas}
\crefname{proposition}{Proposition}{Propositions}
\crefname{claim}{Claim}{Claims}
\crefname{definition}{Definition}{Definitions}
\crefname{notation}{Notation}{Notations}
\crefname{problem}{Problem}{Problems}
\crefname{question}{Question}{Questions}
\crefname{note}{Note}{Notes}
\crefname{remark}{Remark}{Remarks}
\crefname{example}{Example}{Examples}
\crefname{enumi}{}{}
\crefname{enumii}{}{}
\crefname{enumiii}{}{}
\def\csname ver@etex.sty\endcsname{3000/12/31}
\newcommand{\restore@Environment}[1]{%
  \AtBeginDocument{%
    \csletcs{#1*}{#1}%
    \csletcs{end#1*}{end#1}%
  }%
}
\forcsvlist\restore@Environment{alignat,equation,gather,multline,flalign,align}
\setlist{leftmargin=20pt}
\setlist[enumerate]{label=\textup{(\roman*)}}
\newcommand{\rev}{\overleftarrow}
\let\tmp\phi
\let\phi\varphi
\let\varphi\tmp
\let\tmp\epsilon
\let\epsilon\varepsilon
\let\varepsilon\tmp
\newcommand{\midmid}{\mathrel{}\middle|\mathrel{}}
\renewcommand{\mod}[1]{(\mathrm{mod}\ #1)}
\renewcommand{\and}{\quad\text{and}\quad}
\NewDocumentCommand{\xsideset}{mmme{_^}}{%
\mathop{%
\settowidth{\dimen0}{$\m@th\displaystyle#3$}%
\dimen0=.5\dimen0
\settowidth{\dimen2}{$%
\m@th\displaystyle#3%
\IfValueT{#4}{_{#4}}%
\IfValueT{#5}{^{#5}}%
$}%
\dimen2=.5\dimen2
\advance\dimen2 -\dimen0
\sbox6{\scriptspace\z@$\displaystyle{\vphantom{#3}}#1$}
\sbox8{\scriptspace\z@$\displaystyle{\vphantom{#3}}#2$}
\ifdim\wd6>\dimen2 \kern\dimexpr\wd6-\dimen2\relax\fi
{%
\mathop{\llap{\copy6}{\displaystyle#3}\rlap{\copy8}}\limits%
\IfValueT{#4}{_{#4}}%
\IfValueT{#5}{^{#5}}%
}%
\ifdim\wd8>\dimen2 \kern\dimexpr\wd8-\dimen2\relax\fi
}%
}
\newcommand{\dsum}[1]{\xsideset{}{^{#1}}\sum}
\newcommand{\astsum}{\dsum{\smash{\ast}}}
\title[On Telhcirid's theorem on arithmetic progressions]
{On Telhcirid's theorem on arithmetic progressions}
\author[G. Bhowmik]{Gautami Bhowmik}
\author[Y. Suzuki]{Yuta Suzuki}
\keywords{Prime numbers, arithmetic progressions, reversed radix representation.}
\subjclass{%
Primary:
11A63; 
Secondary:
11N05, 
11N69. 
}
\begin{document}

\begin{abstract}
In this paper, we study the distribution of the digital reverses of prime numbers,
which we call the \textsl{reversed primes}.
We prove the infinitude of reversed primes in any arithmetic progression
satisfying straightforward necessary conditions provided the base is sufficiently large.
We indeed prove an effective Siegel--Walfisz type result for reversed primes,
which has a larger admissible level of modulus than the classical case.
\end{abstract}

\maketitle

\section{Introduction}
\label{sec:intro}

The investigation of various digital properties of 
integers has been popular in the last twenty years.
The problems include sums of digits or representations with missing digits.

Take $g\in\mathbb{Z}_{\ge2}$ to be the base of the digital representation.
We write the base-$g$ representation of $n\in\mathbb{N}$ as
\begin{equation}
\label{sec:notation:base_g_expansion}
n
=
\sum_{0\le i<N}n_{i}g^{i}
\quad\text{with}\quad
n_{0},\ldots,n_{N-1}\in\{0,\ldots,g-1\}
\ \text{and}\ 
n_{N-1}\neq0,
\end{equation}
where we say the integer $n$ or its representation is of the length $N$.

Notice that the distribution of primes with special digital properties are 
difficult problems in general,
though there are some known results on the infinitude of certain primes.
For example, Mauduit and Rivat~\cite{MauduitRivat:Gelfond}
proved that, for any base $g\ge2$, there are infinitely many primes
whose sum of digits satisfy a given congruence condition
and Maynard~\cite{Maynard:RestrictedDigitP}
showed that, for any base $g\ge10$, there are infinitely many primes
which do not have one given digit in their digital representation.
The aim of this paper is to show a new infinitude of primes with a special digital property. 

In this paper, we study the digital reverse of integers.
For the base-$g$ representation \cref{sec:notation:base_g_expansion} of a positive integer $n$, write
\begin{equation}
\label{def:rev}
\overleftarrow{n}
\coloneqq
\sum_{0\le i<N}n_{N-i-1}g^{i}
=
\sum_{0\le i<N}n_{i}g^{N-i-1},
\end{equation}
and call it the \textsl{digital reverse} of $n$.
In particular, we call the digital reverse $\rev{p}$ of a prime number 
$p$ a \textsl{reversed prime}.
Our aim is to obtain \textsl{Telhcirid's theorem on arithmetic progressions}.
More precisely, we prove, for any sufficiently large base $g$, the 
infinitude of reversed primes in a given arithmetic progression
except in the degenerate cases where the infinitude does not hold for 
trivial reasons.
We indeed have a quantitative version of this result,
an analog of the Siegel--Walfisz theorem for 
reversed primes, which could be called the \textsl{Zsiflaw--Legeis theorem}.

We avoid  the irregular behavior of the cardinality of  integers with special digital properties
by considering only the integers of a fixed length $N$.
Namely, for $N\ge1$, we count integers in the set
\begin{equation}
\label{def:G_N}
\mathscr{G}_{N}
\coloneqq
\{
g^{N-1}\le n<g^{N}
\mid
n\not\equiv0\ \mod{g}
\}
\end{equation}
of all positive integers of length $N$ with  non-zero lowest digit.
Note that there is no prime with zero as the lowest digit except $g$ itself.
Also, note that the operator $\rev{\ast}$ is an involution on $\mathscr{G}_{N}$.
For $a,q\in\mathbb{Z}$ with $q\ge1$, we then define
\[
\rev{\pi}_{\!\!N}(a,q)
\coloneqq
\sum_{\substack{
p\in\mathscr{G}_{N}\\
\rev{p}\equiv a\ \mod{q}
}}
1,
\]
which is the counting function for reversed primes of length $N$ in the 
arithmetic progression $a\ \mod{q}$, analogous to the 
classical $\pi(x,a,q)$ that counts the number of primes $\equiv a\ \mod{q}$ less than or equal to $x$.
There are few easy connections between the arithmetic property of a positive integer $n$ and $\rev{n}\ \mod{q}$.
First, for $n\in\mathscr{G}_{N}$, we have
\begin{equation}
\label{rev_congruence}
\rev{n}\equiv g^{N-1}n\ \mod{g^{2}-1}
\end{equation}
since $g^{-1}\equiv g\ \mod{g^{2}-1}$.
Second, for $\nu\in\mathbb{N}$, the residue $\rev{n}\ \mod{g^{\nu}}$ is determined
by the first $\nu$ leading digits of $n$.
However, besides these,
one may find no straightforward connection between $n$ and $\rev{n}\ \mod{q}$.
Therefore, by leaving the 
$\mod{(q,(g^{2}-1)g^{N})}$ condition as it is
and expecting the remaining to behave randomly,
one may expect
\[
\rev{\pi}_{\!\!N}(a,q)
=
\frac{(q,(g^{2}-1)g^{N})}{q}
\sum_{\substack{
p\in\mathscr{G}_{N}\\
\rev{p}\equiv a\ \mod{(q,(g^{2}-1)g^{N})}
}}1
+\rev{R}_{N}(a,q)
\]
with a small remainder term $\rev{R}_{N}(a,q)$
provided $q$ is not too large compared to $g^{N}$
so that $(q,g^{N})$ is determined by $q$ and $(q,g)$.
Similar to the classical Siegel--Walfisz type theorem for 
$\pi(x,a,q)$,
here we prove a pointwise estimate for the remainder term $\rev{R}_{N}(a,q)$.
Unfortunately, we have a strict restriction on the size of the base $g$.
\begin{theorem}
\label{thm:reversed_prime_AP}
For $g,a,q\in\mathbb{Z}$ with $g,q\ge2$,
there exists a constant $c\in(0,1)$ such that
\[
\overleftarrow{\pi}_{\!\!N}(a,q)
=
\frac{(q,(g^{2}-1)g^{N})}{q}
\sum_{\substack{
p\in\mathscr{G}_{N}\\
\rev{p}\equiv a\ \mod{(q,(g^{2}-1)g^{N})}
}}
1
+
O\biggl(
g^{N}
\exp\biggl(-c\cdot\frac{N}{\log q}\biggr)
\biggr)
\]
provided
\begin{equation}
\label{thm:reversed_prime_AP:g_cond}
\alpha_{g}
\coloneqq
\frac{\log(\frac{2}{\pi}\log\cot\frac{\pi}{2g}+\frac{1}{g\sin\frac{\pi}{2g}}+1)}{\log g}<\frac{1}{5}
\quad\text{or equivalently},\quad
g\ge31699
\end{equation}
and
\begin{equation}
\label{thm:reversed_prime_AP:d_cond}
q
\le
\exp\biggl(c\cdot\frac{N}{\log N}\biggr),
\end{equation}
where the constant $c$ and the implicit constant depend only on $g$ and are effectively computable.
\end{theorem}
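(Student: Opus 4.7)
The plan is to detect the congruence $\rev{p}\equiv a\ \mod{q}$ by additive characters modulo $q$, isolate the main term from the frequencies that record the structural identities for $\rev{n}\ \mod{(g^{2}-1)g^{N}}$ noted around \cref{rev_congruence}, and estimate the residual exponential sums over reversed primes by Vaughan's identity together with digit-by-digit Fourier analysis. Writing
\[
\rev{\pi}_{\!\!N}(a,q)
=\frac{1}{q}\sum_{h=0}^{q-1}e\!\left(-\frac{ha}{q}\right)\sum_{p\in\mathscr{G}_{N}}e\!\left(\frac{h\rev{p}}{q}\right),
\]
one groups the frequencies by the reduced denominator of $h/q$. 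Those whose denominator divides $(g^{2}-1)g^{N}$ reassemble, thanks to $g^{-1}\equiv g\ \mod{g^{2}-1}$ and the fact that $\rev{n}\ \mod{g^{\nu}}$ depends only on the top $\nu$ digits of $n$, into exactly the claimed main term indexed by residues modulo $(q,(g^{2}-1)g^{N})$. It then remains to show, for every $h$ whose reduced denominator has a nontrivial factor coprime to $(g^{2}-1)g$, that the twisted sum
\[
S(h)\coloneqq\sum_{p\in\mathscr{G}_{N}}e\!\left(\frac{h\rev{p}}{q}\right)
\]
saves at least $\exp(-cN/\log q)$ over the trivial bound $g^{N}$.

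Next, I would apply Vaughan's identity to decompose $S(h)$ into Type I sums of the shape $\sum_{m\le U}c_{m}\sum_{n}e(h\rev{mn}/q)$ and Type II sums $\sum_{U<m\le V}\sum_{n}a_{m}b_{n}\,e(h\rev{mn}/q)$ on dyadic scales, with $U,V$ taken as suitable powers of $g^{N}$. The elementary building block in either case is the unrestricted digit-twisted sum
\[
T(\alpha)\coloneqq\sum_{n\in\mathscr{G}_{N}}e(\alpha\rev{n}),
\]
which factorizes across digit positions because $\rev{n}=\sum_{0\le i<N}n_{i}g^{N-1-i}$ separates the digits. Each digit yields a complete geometric sum bounded by $\min(g,|2\sin(\pi\alpha g^{N-1-i})|^{-1})$, and taking the product over $i$ together with an $\ell^{1}$-style estimate in the frequency produces a bound whose exponent is governed precisely by the quantity
\[
\tfrac{2}{\pi}\log\cot\tfrac{\pi}{2g}+\tfrac{1}{g\sin(\pi/(2g))}+1
\]
from \cref{thm:reversed_prime_AP:g_cond}, yielding $|T(\alpha)|\ll g^{\alpha_{g}N}$ for $\alpha$ as above. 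After removing the divisibility condition $m\mid\,\cdot\,$ by $\frac{1}{m}\sum_{k\mod m}e(k\,\cdot\,/m)$, the inner sum of a Type I piece reduces to an instance of $T$ with a shifted frequency, and the same digit-wise analysis applies.

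The main obstacle is the Type II sum, since $\rev{mn}$ does not factor through $\rev{m}$ and $\rev{n}$. I would attack it by Cauchy--Schwarz in the outer variable, producing averaged correlations
\[
\sum_{m}e\!\left(\frac{h(\rev{mn}-\rev{mn'})}{q}\right)
\]
over pairs $(n,n')$, and then control the digital difference $\rev{mn}-\rev{mn'}$ via a carry-propagation lemma ensuring that, outside a sparse exceptional set of $m$, the two digit expansions of $mn$ and $mn'$ differ in only a controlled number of positions, so that the digit-wise bound continues to save in the remaining positions. Tracing through Vaughan's balance between Type I and Type II contributions, the resulting saving on $S(h)$ is a power of $g^{-N}$ depending on $\alpha_{g}$, and it beats the trivial bound by the margin required to close the argument precisely when $\alpha_{g}<\tfrac{1}{5}$, which is the source of the base restriction. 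Summing over the non-structural frequencies $h$ under \cref{thm:reversed_prime_AP:d_cond}, so that the $\log q$ losses are absorbed, yields the claimed remainder $g^{N}\exp(-cN/\log q)$.
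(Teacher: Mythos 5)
Your opening step (orthogonality modulo $q$, separating the frequencies $k$ with $q\mid(g^{2}-1)g^{N}k$ to reassemble the main term) matches the paper. But from there you diverge onto a route with a genuine gap: you propose to bound $S(h)=\sum_{p\in\mathscr{G}_{N}}e(h\rev{p}/q)$ by applying Vaughan's identity \emph{directly to the digitally twisted phase}, which forces you to control Type~II sums involving $\rev{mn}$. You correctly identify that $\rev{mn}$ does not factor through $m$ and $n$, but the ``carry-propagation lemma'' you invoke to rescue the Cauchy--Schwarz step does not exist in the form you need. After Cauchy--Schwarz the relevant difference is $mn-mn'=m(n-n')$, which for $m\sim g^{N/2}$ and generic $n,n'\sim g^{N/2}$ has size comparable to $g^{N}$, so the base-$g$ expansions of $mn$ and $mn'$ differ in a positive proportion of \emph{all} digit positions, not in a controlled sparse set. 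Worse, even a single differing low-order digit of $mn$ is sent by the reversal to the weight $g^{N-1}$, so $\rev{mn}-\rev{mn'}$ is essentially uncontrollable modulo $q$. This absence of a locality/carry property for the reversal is precisely why the literature you would be emulating (Mauduit--Rivat for the sum of digits) does not transfer, and why \cite{DMRSS:ReversiblePrime} only reaches almost-primes for reversibility via sieves. Your sketch asserts the key estimate rather than proving it, and the assertion is false as stated.

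The paper sidesteps this entirely by a second application of orthogonality, modulo $g^{N}$, which decouples primality from reversal: the remainder becomes
\begin{equation*}
\rev{R}_{N}(a,q)
=\frac{1}{qg^{N}}\sum_{\substack{0\le k<q\\ q\nmid(g^{2}-1)g^{N}k}}e\Bigl(-\frac{ka}{q}\Bigr)\sum_{0\le h<g^{N}}S_{N}\Bigl(\frac{h}{g^{N}}\Bigr)F_{N}\Bigl(-\frac{h}{g^{N}},\frac{k}{q}\Bigr),
\end{equation*}
so Vaughan-type technology is only ever applied to the \emph{linear} phase in $S_{N}(\alpha)=\sum_{p<g^{N}}e(\alpha p)$ (\cref{lem:S_bound_pure,lem:S_bound_eta}), while the reversal lives in the prime-free sum $F_{N}(\alpha,\beta)$, which genuinely factors digit by digit (\cref{prop:product_formula}). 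The proof then runs a discrete circle method over the Farey dissection of $h/g^{N}$: the major arcs are handled by the $L^{\infty}$ bound of \cref{lem:L_infty_bound}, and the minor arcs by the hybrid $L^{1}$/large-sieve bound of \cref{lem:hybrid}, which is where the constant $C_{g}$ and the condition $\alpha_{g}<\tfrac15$ actually enter (through $L^{1}$ averages, not through a pointwise bound $|T(\alpha)|\ll g^{\alpha_{g}N}$ as you claim). To repair your argument you would essentially have to reinvent this decoupling; as written, the Type~II step fails.
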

For the monotonicity of $\alpha_{g}$ and the inequality $\alpha_{31699}<\frac{1}{5}$, see \cref{rem:alpha_g_monotonic}.

By \cref{rev_congruence},
if $(a,q,g^{2}-1)>1$,
a prime $p$ with $\rev{p}\equiv a\ \mod{(q,g^{2}-1)}$ is a prime divisor of $q$.
Also, the lowest digit of any reversed prime $\rev{p}$ is non-zero,
so there is no prime $p$ with $\rev{p}\equiv a\ \mod{(q,g)}$ if $g\mid (a,q)$.
Except for these degenerate cases,
where there are only finitely many reversed primes in the arithmetic progression $a\ \mod{q}$,
we can use some form of the effective prime number theorem
in arithmetic progressions in \cref{thm:reversed_prime_AP} to obtain the following asymptotic formula.

\begin{corollary}[Zsiflaw--Legeis theorem]
\label{cor:ZsiflawLegeis}
Under the same setting as in \cref{thm:reversed_prime_AP}, we have
\[
\overleftarrow{\pi}_{\!\!N}(a,q)
=
\frac{\rho_{g}(a,q)}{q}
\frac{g^{N}}{\log g^{N}}
\biggl(1+O\biggl(\frac{1}{N}\biggr)\biggr)
+
O(g^{N}\exp(-c\sqrt{N}))
\]
provided $g\ge31699$ and
\begin{equation}
\label{cor:ZsiflawLegeis:d_cond}
q
\le
\exp(c\sqrt{N}),
\end{equation}
where $c\in(0,1)$ is some constant, the function $\rho_{g}(a,q)$ is given by
\[
\rho_{g}(a,q)
\coloneqq
\left\{
\begin{array}{>{\displaystyle}cl}
\biggl(
1
-
\mathbbm{1}_{(q,g)\mid a}
\frac{(q,g)}{g}
\biggr)
\prod_{p\mid (q,g^{2}-1)}\biggl(\frac{p}{p-1}\biggr)
&\text{if $(a,q,g^{2}-1)=1$ and $g\nmid (a,q)$},\\[6mm]
0&\text{otherwise}
\end{array}
\right.
\]
and $c$ and the implicit constant depend only on $g$ and are effectively computable.
\end{corollary}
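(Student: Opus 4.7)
The plan is to apply \cref{thm:reversed_prime_AP} and then evaluate asymptotically the sum
\[
S := \sum_{\substack{p \in \mathscr{G}_{N} \\ \rev{p} \equiv a \mod{d}}} 1
\]
appearing in its main term, where $d := (q,(g^{2}-1)g^{N})$. The degenerate cases are handled first. If $(a,q,g^{2}-1)>1$, then \cref{rev_congruence} forces any prime $p$ with $\rev p\equiv a\mod{(q,g^{2}-1)}$ to divide $g^{2}-1$, leaving only $O(1)$ such $p$; if $g\mid(a,q)$, then $\rev p\equiv a\mod{g}$ forces the lowest digit of $\rev p$, equivalently the leading digit of $p$, to vanish, contradicting $p\in\mathscr{G}_{N}$. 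In both situations $\rho_{g}(a,q)=0$ and the trivial contribution is absorbed by the error term of \cref{thm:reversed_prime_AP}.

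In the non-degenerate case, I factor $d=d_{1}d_{2}$ with $d_{1}:=(q,g^{2}-1)$ and $d_{2}:=(q,g^{N})$, which are coprime since $(g,g^{2}-1)=1$. By the Chinese Remainder Theorem together with \cref{rev_congruence}, the condition $\rev p\equiv a\mod{d}$ splits into the genuine arithmetic progression condition $p\equiv g^{1-N}a\mod{d_{1}}$ and a condition $\rev p\equiv a\mod{d_{2}}$ on the top digits of $p$. I choose $\nu$ minimal with $d_{2}\mid g^{\nu}$, which gives $\nu=O(\log q)=O(\sqrt{N})$ under \cref{cor:ZsiflawLegeis:d_cond}. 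Writing $p=Ag^{N-\nu}+p'$ with $A\in[g^{\nu-1},g^{\nu})$ the top $\nu$-digit block, one computes $\rev p\equiv\rev A\mod{g^{\nu}}$, so the second condition becomes $\rev A\equiv a\mod{d_{2}}$.

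For each admissible $A$, the inner count is the number of primes in $[Ag^{N-\nu},(A+1)g^{N-\nu})$ in the progression $g^{1-N}a\mod{d_{1}}$. Since $d_{1}\mid g^{2}-1$ is bounded in terms of $g$ and $(a,d_{1})=1$ by non-degeneracy, Siegel--Walfisz yields
\[
\frac{1}{\varphi(d_{1})}\cdot\frac{g^{N-\nu}}{N\log g}\bigl(1+O(1/N)\bigr)+O\bigl(g^{N-\nu}\exp(-c\sqrt{N})\bigr).
\]
The count $N_{A}$ of admissible $A$ equals $\#\{b\in[0,g^{\nu}):b\equiv a\mod{d_{2}},\ g\nmid b\}$ via the digital reversal bijection. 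Because $d_{2}\mid g^{\nu}$ \emph{exactly}, this has the closed form
\[
N_{A}=\frac{g^{\nu}}{d_{2}}\biggl(1-\mathbbm{1}_{(q,g)\mid a}\cdot\frac{(q,g)}{g}\biggr)
\]
with no truncation error. Using $d_{1}/\varphi(d_{1})=\prod_{p\mid d_{1}}p/(p-1)$ and assembling, one obtains $(d/q)\cdot S=(\rho_{g}(a,q)/q)\cdot(g^{N}/(N\log g))(1+O(1/N))+O(g^{N}\exp(-c\sqrt{N}))$.

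The delicate point is the exactness of $N_{A}$: an $O(1)$ error there would contribute an extraneous $O(g^{N-\nu}/N)$ term exceeding the admissible error budget. This exactness follows from the minimality of $\nu$ and $d_{2}\mid g^{\nu}$, which ensure that both $d_{2}$ and $d_{2}g/(d_{2},g)$ divide $g^{\nu}$ without remainder, so the inclusion-exclusion count $\#\{b<g^{\nu}:b\equiv a\mod{d_{2}}\}-\#\{b<g^{\nu}:b\equiv a\mod{d_{2}},\,g\mid b\}$ is exact. Combining the evaluation of $S$ with the error term from \cref{thm:reversed_prime_AP}, which is $O(g^{N}\exp(-c\sqrt{N}))$ under \cref{cor:ZsiflawLegeis:d_cond}, completes the proof.
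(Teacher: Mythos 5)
Your proposal is correct and follows essentially the same route as the paper's proof: the same CRT splitting of the modulus into $(q,g^{2}-1)$ and $(q,g^{N})$, the same translation of the top-digit condition into a union of intervals $[v^{\ast},v^{\ast}+g^{N-N_{0}})$ indexed by an exactly countable set of residues, and the same appeal to an effective prime number theorem in progressions with modulus $(q,g^{2}-1)\ll_{g}1$. The one imprecision is your per-interval error term $O(g^{N-\nu}\exp(-c\sqrt{N}))$: the effective PNT gives only $O(g^{N}\exp(-c_{\mathrm{PNT}}\sqrt{N}))$ per interval, so the total error is $O(g^{N+\nu}\exp(-c_{\mathrm{PNT}}\sqrt{N}))$; this is still absorbed into $O(g^{N}\exp(-c\sqrt{N}))$ because your bound $\nu\ll\sqrt{N}$ lets one shrink $c$, which is exactly the final step carried out in the paper.
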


This asymptotic formula implies the following infinitude :
\begin{corollary}[Telhcirid's theorem on arithmetic progressions]
\label{cor:Telhcirid}
For $g,a,q\in\mathbb{Z}$ with
\[
g\ge31699,\quad
q\ge1,\quad
(a,q,g^{2}-1)=1,\quad
g\nmid(a,q),
\]
there are infinitely many primes $p$ such that $\rev{p}\equiv a\ \mod{q}$.
\end{corollary}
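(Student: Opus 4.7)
The plan is to deduce \cref{cor:Telhcirid} directly from the Zsiflaw--Legeis asymptotic formula in \cref{cor:ZsiflawLegeis}, by fixing the base $g$, the residue $a$, and the modulus $q$, and letting the length $N$ tend to infinity.

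The first step is to verify that the density $\rho_{g}(a,q)$ appearing in \cref{cor:ZsiflawLegeis} is strictly positive under the hypotheses of \cref{cor:Telhcirid}. Since $(a,q,g^{2}-1)=1$ and $g\nmid(a,q)$, we are in the first branch of the definition of $\rho_{g}$, and the product $\prod_{p\mid(q,g^{2}-1)}\frac{p}{p-1}$ is a positive rational. It therefore suffices to check that
\[
1-\mathbbm{1}_{(q,g)\mid a}\frac{(q,g)}{g}>0.
\]
Since $(q,g)\le g$, the quantity $\frac{(q,g)}{g}\le 1$ equals $1$ only when $(q,g)=g$, i.e.\ $g\mid q$; combined with $(q,g)\mid a$ this forces $g\mid(a,q)$, contradicting the assumption $g\nmid(a,q)$. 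Hence $\rho_{g}(a,q)>0$.

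Next I would apply \cref{cor:ZsiflawLegeis} with $g$, $a$, $q$ fixed. For any sufficiently large $N$, the level condition \cref{cor:ZsiflawLegeis:d_cond} $q\le\exp(c\sqrt{N})$ is automatically satisfied, so the asymptotic formula
\[
\rev{\pi}_{\!\!N}(a,q)=\frac{\rho_{g}(a,q)}{q}\cdot\frac{g^{N}}{N\log g}\biggl(1+O\biggl(\frac{1}{N}\biggr)\biggr)+O(g^{N}\exp(-c\sqrt{N}))
\]
is valid. The main term is of exact order $g^{N}/N$, whereas the error term is of order $g^{N}\exp(-c\sqrt{N})$ and is therefore negligible compared to the main term as $N\to\infty$. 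Positivity of $\rho_{g}(a,q)$ then forces $\rev{\pi}_{\!\!N}(a,q)\to\infty$; in particular $\rev{\pi}_{\!\!N}(a,q)\ge 1$ for all large $N$.

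Finally, since $\mathscr{G}_{N}\cap\mathscr{G}_{M}=\emptyset$ for $N\neq M$, primes contributing to $\rev{\pi}_{\!\!N}(a,q)$ for different values of $N$ are distinct. Thus the sets of primes witnessing $\rev{\pi}_{\!\!N}(a,q)\ge 1$ across arbitrarily large $N$ yield infinitely many primes $p$ with $\rev{p}\equiv a\mod{q}$. The one and only substantive obstacle in this argument is \cref{cor:ZsiflawLegeis} itself, which in turn rests on the effective Siegel--Walfisz type estimate of \cref{thm:reversed_prime_AP}; once these are in hand, the deduction of Telhcirid's theorem is essentially a positivity check plus the trivial comparison of the main and error terms.
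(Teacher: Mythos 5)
Your proposal is correct and follows essentially the same route as the paper: verify $\rho_{g}(a,q)>0$ under the stated hypotheses (the paper notes $\rho_{g}(a,q)\ge\frac{1}{g}$ by the same observation that $(q,g)\mid a$ and $(q,g)=g$ would force $g\mid(a,q)$) and then invoke \cref{cor:ZsiflawLegeis} for $N\to\infty$. Your additional remarks on the level condition being automatic for fixed $q$ and on the disjointness of the sets $\mathscr{G}_{N}$ are details the paper leaves implicit, but the argument is the same.
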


It is interesting to observe that
if $(a,q,g^{2}-1)=1$ and $g\nmid(a,q)$, \cref{cor:ZsiflawLegeis} implies
\[
\overleftarrow{\pi}_{\!\!N}(a,q)
\sim
\frac{\rho_{g}(a,q)}{a}
\frac{g^{N}}{\log g^{N}}
\quad\text{as $N\to\infty$}
\]
in the range
\[
q\le o(\exp(c\sqrt{N})N^{-1})
\quad\text{as $N\to\infty$}
\]
which is much wider
than the classical range $d\le N^{A}$ for the Siegel--Walfisz theorem with a constant $A$.
(Note that the main variable, say $x$, of the Siegel--Walfisz theorem
corresponds to $g^{N}$ here and so $\log x$ corresponds to $N$.)
Indeed, from a technical point of view,
we do not use the  Siegel--Walfisz theorem at all,
which makes \cref{thm:reversed_prime_AP} effective.
Note that when we deduce \cref{cor:ZsiflawLegeis} from \cref{thm:reversed_prime_AP},
we use the prime number theorem in arithmetic progressions with the modulus $(q,g^{2}-1)\ll_{g} 1$
and so there is no effect of the Siegel zeros provided  that we do not care about the 
dependence of the error term on the base $g$.
One may think of \cref{thm:reversed_prime_AP} as partial evidence that
primality and digital reversing are uncorrelated to each other.

There are some preceding results on the digital reverse of primes.
A positive integer $n$ is called a \textsl{palindrome} if $n=\rev{n}$.
If such an $n$ is a prime, e.g.\ 101, it is called a \textsl{palindromic prime}. 
The infinitude of palindromic primes is conjectured and
remains one of the difficult problems on the digital properties of primes.
In this context, Col~\cite{Col:PalindromeAP}
proved that the density of palindromic primes
among palindromes less than $x$ is $\ll(\log x)^{-1}$ as is expected,
and improved the earlier result of Banks, Hart and Sakata~\cite{BHS}.
We note that these papers contain Siegel--Walfisz type results for palindromes,
\cite[Corollary~4.5]{BHS} and \cite[Th\'{e}or\`{e}me~1]{Col:PalindromeAP}.

Further, as in more classical unsolved problems of prime numbers,
there exist partial results with almost primes instead of primes for their digital properties.
We call an integer $n$ an $r$-almost prime if $n$ has at most $r$ prime factors counted with multiplicity.
For example, Tuxanidy and Panario~\cite[Theorem~1.4]{TuxanidyPanario},
proved that there are infinitely many palindromic $6$-almost primes for any fixed base $g$,
which improved the result of Col~\cite[Corollaire~2] {Col:PalindromeAP}.
There is a similar but weaker conjecture on the digital property of primes.
A prime number $p$ is called a \textsl{reversible prime} if $\rev{p}$ is also a prime.
Again it is not known whether there are infinitely many reversible primes.
Note that a palindromic prime is automatically a reversible prime
and so the infinitude of reversible primes would follow from the infinitude of palindromic primes.
A partial result on the infinitude of reversible primes,
recently obtained by Dartyge, Martin, Rivat, Shparlinski and Swaenepoel~\cite{DMRSS:ReversiblePrime},
states that there are infinitely many integers $n$
such that both  $n$ and $\rev{n}$ are $8$-almost primes in base $g=2$.
The fact that their result on reversible primes is weaker than that of palindromic primes
is probably caused by the ineffectiveness of the two-dimensional sieve used there.

The method to prove \cref{thm:reversed_prime_AP}
is a mixture of the discrete circle method
used in preceding studies on the digital properties of integers
and the techniques on digital reverse available 
in \cite{Col:PalindromeAP,DMRSS:ReversiblePrime,MauduitRivat:carres,MauduitRivat:Gelfond,TuxanidyPanario}.
We mainly follow Maynard~\cite{Maynard:PnP} for the setting of the discrete circle method. 
The size restriction on the base $g$ is caused by the same reason as in \cite{Maynard:PnP},
i.e.\ the weakness of the $L^{1}$ bound (\cref{lem:L1_discrete_dev}) for small base $g$
(note that $C_{g}$ is a constant depending on $g$ in \cref{lem:L1_discrete_dev}
but its $M$-th power is not negligible).

\section{Notation}
\label{sec:notation}
Besides those introduced in the main body,
we use the following notations.

Throughout the paper,
$A$, $H$, $P$, $Q$, $R$, $X$ denote positive real numbers,
$t$, $u$, $x$, $z$, $\alpha$, $\beta$, $\delta$, $\epsilon$, $\eta$, $\theta$, $\kappa$, $\lambda$ denote real numbers,
$d$, $n$, $q$, $r$, $J$, $K$, $L$, $M$, $N$, $\nu$ denote positive integers,
$i$, $j$, $k$ denote non-negative integers and
$a$, $b$, $h$, $\ell$, $m$, $v$ denote integers.
The letter $p$ is reserved for prime numbers.
The letter $c$ is used for positive constants
which can take different values line by line.

For a real number $x$, we let $e(x)\coloneqq\exp(2\pi ix)$, while
$[x]$ denotes the integer part of $x$, i.e.\ the greatest integer $\le x$
and $\|x\|\coloneqq\min_{n\in\mathbb{Z}}|x-n|$ denotes the distance between $x$ and its nearest integer.

For a positive integer $q$, the symbol
\[
\astsum_{a\ \mod{q}}
\]
stands for the sum over reduced residues $\mod{q}$.

Throughout the paper, $g\ge2$ is an integer used as the base of radix representation.
For a positive integer $n$ with the base-$g$ representation \cref{sec:notation:base_g_expansion},
we define its digital reverse $\rev{n}$ by \cref{def:rev}.
We let
\begin{equation}
\label{def:C_g}
C_{g}
\coloneqq
\frac{2}{\pi}\log\cot\frac{\pi}{2g}
+\frac{1}{g\sin\frac{\pi}{2g}}
+1
\quad\text{and}\quad
\alpha_{g}
\coloneqq
\frac{\log C_{g}}{\log g}
=
\frac{\log(\frac{2}{\pi}\log\cot\frac{\pi}{2g}+\frac{1}{g\sin\frac{\pi}{2g}}+1)}{\log g}.
\end{equation}

For $N\in\mathbb{Z}_{\ge3}$ and $\alpha,\beta\in\mathbb{R}$, we use the exponential sums
\begin{equation}
F_{N}(\alpha,\beta)
\coloneqq
\sum_{n\in\mathscr{G}_{N}}
e(\alpha n+\beta\overleftarrow{n})
\quad\text{and}\quad
S_{N}(\alpha)
\coloneqq
\sum_{1\le p<g^{N}}e(\alpha p).
\end{equation}
For $N\in\mathbb{Z}_{\ge3}$, let us write
\[
\Phi_{N}(\alpha,\beta)
\coloneqq
\prod_{i=1}^{N-2}
\varphi(\alpha g^{i}+\beta g^{N-i-1})
\quad\text{with}\quad
\varphi(\alpha)
\coloneqq
\sum_{0\le n<g}e(\alpha n).
\]

The arithmetic function $\phi(n)$ stands for the Euler totient function.

For integers $n_{1},\ldots,n_{r}$,
we write $(n_{1},\ldots,n_{r})$ for the greatest common divisor of $n_{1},\ldots,n_{r}$.

For a logical formula $P$,
we write $\mathbbm{1}_{P}$
for the indicator function of $P$.

If a theorem or a lemma is stated
with the phrase ``where the implicit constant depends on $a,b,c,\ldots$'',
then every implicit constant in the corresponding proof
may also depend on $a,b,c,\ldots$ without being specifically mentioned.

\section{Auxiliary lemmas on the exponential sum with digital reverse}
\label{sec:lemmas_F}
We first prove some basic properties of the exponential sums $F_{N}(\alpha,\beta)$ and $\Phi_{N}(\alpha,\beta)$.

\begin{proposition}
\label{prop:swap}
We have $F_{N}(\alpha,\beta)=F_{N}(\beta,\alpha)$
and $\Phi_{N}(\alpha,\beta)=\Phi_{N}(\beta,\alpha)$.
\end{proposition}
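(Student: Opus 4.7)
The plan is to prove the two identities by exploiting two natural symmetries: the reversal involution on $\mathscr{G}_{N}$ for $F_{N}$, and the reindexing $i\mapsto N-i-1$ for $\Phi_{N}$.

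For $F_{N}(\alpha,\beta)$, I would start from the observation, already noted just after \cref{def:G_N}, that $\rev{\ast}\colon\mathscr{G}_{N}\to\mathscr{G}_{N}$ is an involution. This means that substituting $m=\rev{n}$ in the defining sum yields a bijection of $\mathscr{G}_{N}$ with itself under which $n=\rev{m}$. Applying this change of variable, the exponential $e(\alpha n+\beta\rev{n})$ becomes $e(\alpha\rev{m}+\beta m)$, and the sum reorganizes as $F_{N}(\beta,\alpha)$. This is a one-line argument, and I do not foresee any obstacle beyond checking that the involutivity of $\rev{\ast}$ on $\mathscr{G}_{N}$ is indeed valid (which follows from the definition of $\mathscr{G}_{N}$: the condition $n\not\equiv0\ \mod{g}$ together with $n_{N-1}\neq0$ precisely ensures that reversing twice gives back $n$ without loss of leading zeros).

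For $\Phi_{N}(\alpha,\beta)$, I would reindex the product using $j=N-i-1$. As $i$ ranges over $\{1,\ldots,N-2\}$, so does $j$, and each factor $\varphi(\alpha g^{i}+\beta g^{N-i-1})$ becomes $\varphi(\alpha g^{N-j-1}+\beta g^{j})=\varphi(\beta g^{j}+\alpha g^{N-j-1})$. Taking the product over $j\in\{1,\ldots,N-2\}$ recovers $\Phi_{N}(\beta,\alpha)$. Again this is essentially immediate.

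Since both claims reduce to a symmetry substitution, there is no genuine obstacle; the only point to be careful about is the precise range of summation or product indices so that the change of variable is a bijection, which I would verify explicitly in the write-up.
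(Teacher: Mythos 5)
Your proposal is correct and follows exactly the paper's argument: the identity for $F_{N}$ via the involutivity of $\rev{\ast}$ on $\mathscr{G}_{N}$, and the identity for $\Phi_{N}$ via the reindexing $i\leadsto N-i-1$ of the product. Nothing further is needed.
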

\begin{proof}
The first equation is obvious since the digit reverse
is indeed an involution on $\mathscr{G}_{N}$.
The latter one is obtained by changing the variable
via $i\leadsto N-i-1$ as
\[
\Phi_{N}(\alpha,\beta)
=
\prod_{i=1}^{N-2}\varphi(\alpha g^{i}+\beta g^{N-i-1})
=
\prod_{i=1}^{N-2}\varphi(\beta g^{i}+\alpha g^{N-i-1})
=
\Phi_{N}(\beta,\alpha).
\]
Thus, we obtain the assertion.
\end{proof}

\begin{proposition}
\label{prop:product_formula}
For $N\in\mathbb{Z}_{\ge3}$, we have
\[
|F_{N}(\alpha,\beta)|
\le
g^{2}
|\Phi_{N}(\alpha,\beta)|
=
g^{2}
\prod_{i=1}^{N-2}|\varphi(\alpha g^{i}+\beta g^{N-i-1})|.
\]
\end{proposition}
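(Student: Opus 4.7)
The plan is to factorize $F_N(\alpha,\beta)$ as an iterated product over the base-$g$ digits of $n$ and then simply bound the two boundary factors trivially.

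First I would parameterize $\mathscr{G}_N$ by its digits: every $n \in \mathscr{G}_N$ is uniquely written as $n = \sum_{0 \le i < N} n_i g^i$ with $n_i \in \{0,\ldots,g-1\}$, where the constraint $n \not\equiv 0 \pmod g$ becomes $n_0 \neq 0$ and the length-$N$ constraint becomes $n_{N-1} \neq 0$. Then $\rev{n} = \sum_{0 \le i < N} n_i g^{N-i-1}$ by definition, so
\[
\alpha n + \beta \rev{n} = \sum_{0 \le i < N} n_i(\alpha g^i + \beta g^{N-i-1}).
\]
Substituting this into $F_N(\alpha,\beta)$ and separating the sum over independent digits gives the factorization
\[
F_N(\alpha,\beta) = \Bigl(\sum_{n_0=1}^{g-1} e(n_0(\alpha + \beta g^{N-1}))\Bigr) \cdot \Bigl(\sum_{n_{N-1}=1}^{g-1} e(n_{N-1}(\alpha g^{N-1} + \beta))\Bigr) \cdot \prod_{i=1}^{N-2} \varphi(\alpha g^i + \beta g^{N-i-1}),
\]
the inner factors being exactly the $\varphi(\cdot)$ appearing in $\Phi_N(\alpha,\beta)$.

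Finally I would apply the triangle inequality to the two boundary sums (each ranges over $g-1$ unimodular terms, so each has modulus $\le g-1 < g$) to conclude $|F_N(\alpha,\beta)| \le (g-1)^2 |\Phi_N(\alpha,\beta)| \le g^2 |\Phi_N(\alpha,\beta)|$. There is no real obstacle here: the only thing to notice is that the condition defining $\mathscr{G}_N$ decouples into two independent digit restrictions (on $n_0$ and $n_{N-1}$), which is what permits the clean factorization and leaves the remaining $N-2$ digits completely free, matching the product defining $\Phi_N$.
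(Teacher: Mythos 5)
Your proof is correct and follows essentially the same route as the paper: expand $n\in\mathscr{G}_{N}$ in base-$g$ digits, note that the defining conditions decouple into $n_{0}\neq0$ and $n_{N-1}\neq0$, factor $F_{N}(\alpha,\beta)$ into a product of single-digit exponential sums, and bound the two boundary factors trivially by $g$. No issues.
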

\begin{proof}
By writing $n\in\mathscr{G}_{N}$ as
\[
n
=
\sum_{0\le i<N}n_{i}g^{i}
\quad\text{with}\quad
n_{0},\ldots,n_{N-1}\in\{0,\ldots,g-1\}
\ \text{and}\ 
n_{0},n_{N-1}\neq0,
\]
we have
\begin{align}
F_{N}(\alpha,\beta)
&=
\sum_{\substack{
1\le n_{0}<g\\
}}
\sum_{
0\le n_{1},\ldots,n_{N-2}<g
}
\sum_{1\le n_{N-1}<g}
e\biggl(
\alpha
\sum_{i=0}^{N-1}
n_{i}g^{i}
+
\beta
\sum_{i=0}^{N-1}
n_{N-i-1}g^{i}
\biggr)\\
&=
\sum_{\substack{
1\le n_{0}<g\\
}}
\sum_{
0\le n_{1},\ldots,n_{N-2}<g
}
\sum_{1\le n_{N-1}<g}
e\biggl(
\alpha
\sum_{i=0}^{N-1}
n_{i}g^{i}
+
\beta
\sum_{i=0}^{N-1}
n_{i}g^{N-i-1}
\biggr)\\
&=
\sum_{\substack{
1\le n_{0}<g\\
}}
\sum_{
0\le n_{1},\ldots,n_{N-2}<g
}
\sum_{1\le n_{N-1}<g}
\prod_{i=0}^{N-1}
e((\alpha g^{i} + \beta g^{N-i-1})n_{i})\\
&=
\biggl(
\sum_{1\le n_{0}<g}
e((\alpha + \beta g^{N-1})n_{0})
\biggr)
\times
\biggl(
\sum_{1\le n_{N-1}<g}
e((\alpha g^{N-1}+\beta)n_{N-1})
\biggr)\\
&\hspace{0.4\textwidth}
\times
\prod_{i=1}^{N-2}
\biggl(
\sum_{0\le n_{i}<g}
e((\alpha g^{i}+\beta g^{N-i-1})n_{i})
\biggr).
\end{align}
Bounding the sum over $n_{0},n_{N-1}$ trivially gives the inequality.
\end{proof}

\begin{proposition}
\label{prop:break_half}
For $M,N\in\mathbb{Z}$ with $3\le M\le N-1$, we have
\[
\Phi_{N}(\alpha,\beta)
=
\Phi_{M}(\alpha,g^{N-M}\beta)
\cdot
\Phi_{N-M+2}(g^{M-2}\alpha,\beta).
\]
\end{proposition}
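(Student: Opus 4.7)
The plan is to just unfold the definition of $\Phi_{N}$ on both sides and match the factors after a single index shift. Writing out each factor on the right-hand side, I would first compute
\[
\Phi_{M}(\alpha,g^{N-M}\beta)
=\prod_{i=1}^{M-2}\varphi(\alpha g^{i}+g^{N-M}\beta\cdot g^{M-i-1})
=\prod_{i=1}^{M-2}\varphi(\alpha g^{i}+\beta g^{N-i-1}),
\]
which matches exactly the factors with $i=1,\ldots,M-2$ in the definition of $\Phi_{N}(\alpha,\beta)$.

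Next, for the second factor I would substitute and then reindex. We have
\[
\Phi_{N-M+2}(g^{M-2}\alpha,\beta)
=\prod_{i=1}^{N-M}\varphi(g^{M-2}\alpha\cdot g^{i}+\beta g^{(N-M+2)-i-1})
=\prod_{i=1}^{N-M}\varphi(\alpha g^{M-2+i}+\beta g^{N-M+1-i}).
\]
Setting $j\coloneqq M-2+i$, so that $i=j-M+2$ and $N-M+1-i=N-j-1$, as $i$ runs over $\{1,\ldots,N-M\}$ the new index $j$ runs over $\{M-1,\ldots,N-2\}$, and the product becomes
\[
\prod_{j=M-1}^{N-2}\varphi(\alpha g^{j}+\beta g^{N-j-1}),
\]
which supplies precisely the remaining factors in $\Phi_{N}(\alpha,\beta)$.

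Multiplying the two contributions yields $\prod_{i=1}^{N-2}\varphi(\alpha g^{i}+\beta g^{N-i-1})=\Phi_{N}(\alpha,\beta)$, as required. There is no real obstacle here; the only point that needs care is to check that the ranges $1\le i\le M-2$ and $M-1\le j\le N-2$ tile $\{1,\ldots,N-2\}$ exactly, which uses the hypothesis $3\le M\le N-1$ to ensure both ranges are non-empty and that no factor is duplicated or omitted.
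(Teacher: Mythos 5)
Your proposal is correct and is essentially the same argument as the paper's: both split the defining product $\prod_{i=1}^{N-2}$ at $i=M-2$ and identify the two pieces with $\Phi_{M}(\alpha,g^{N-M}\beta)$ and $\Phi_{N-M+2}(g^{M-2}\alpha,\beta)$ via the reindexing $i\leadsto i+M-2$. The only difference is cosmetic (you expand the right-hand side and reassemble, the paper splits the left-hand side), and your index bookkeeping checks out.
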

\begin{proof}
We have
\begin{align}
\Phi_{N}(\alpha,\beta)
=
\prod_{i=1}^{N-2}
\varphi(\alpha g^{i} + \beta g^{N-i-1})
&=
\prod_{i=1}^{M-2}
\varphi(\alpha g^{i} + \beta g^{N-i-1})
\prod_{i=M-1}^{N-2}
\varphi(\alpha g^{i} + \beta g^{N-i-1}).
\end{align}
We then have
\begin{align}
\prod_{i=1}^{M-2}
\varphi(\alpha g^{i} + \beta g^{N-i-1})
&=
\prod_{i=1}^{M-2}
\varphi(\alpha g^{i} + g^{N-M}\beta g^{M-i-1})
=
\Phi_{M}(\alpha, g^{N-M}\beta)
\end{align}
and by changing the variable via $i\leadsto i+M-2$, we have
\begin{align}
\prod_{i=M-1}^{N-2}
\varphi(\alpha g^{i} + \beta g^{N-i-1})
=
\prod_{i=1}^{(N-M+2)-2}
\varphi(g^{M-2}\alpha g^{i} + \beta g^{(N-M+2)-i-1})
=
\Phi_{N-M+2}(g^{M-2}\alpha,\beta).
\end{align}
This completes the proof.
\end{proof}

\section{The \texorpdfstring{$L^{\infty}$}{L infinity}-bound}
\label{sec:L_infty}
In this section, we prove the pointwise or $L^{\infty}$-bound for $\Phi_{N}(\alpha,\beta)$
which will be used for the major arc estimate of $\rev{R}_{N}(a,q)$.
Some of the following results on exponential sums related to digital problems
are known but, for the ease of readers, we reprove most of them.
\begin{proposition}
\label{prop:phi_formula}
We have
\[
\varphi(\alpha)
=
e\biggl(\frac{g-1}{2}\alpha\biggr)
\frac{\sin\pi g\alpha}{\sin\pi\alpha}.
\]
\end{proposition}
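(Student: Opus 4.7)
The plan is to recognize $\varphi(\alpha)$ as a geometric sum and apply the standard ``half-angle'' trick to convert the exponential quotient into a ratio of sines times a phase factor. More concretely, for $\alpha\notin\mathbb{Z}$ the sum
\[
\varphi(\alpha)=\sum_{0\le n<g}e(\alpha n)
\]
equals $(e(g\alpha)-1)/(e(\alpha)-1)$ by the geometric series formula, and the task reduces to algebraic simplification of this quotient.

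The key step is the identity $e(x)-1=e(x/2)(e(x/2)-e(-x/2))=2i\,e(x/2)\sin\pi x$, applied separately to numerator and denominator. This gives
\[
\varphi(\alpha)
=\frac{2i\,e(g\alpha/2)\sin\pi g\alpha}{2i\,e(\alpha/2)\sin\pi\alpha}
=e\!\left(\frac{g-1}{2}\alpha\right)\frac{\sin\pi g\alpha}{\sin\pi\alpha},
\]
which is exactly the claimed formula. For the edge case $\alpha\in\mathbb{Z}$, the right-hand side is understood by the removable-singularity limit (equivalently by l'H\^opital), which yields $\pm g$ and matches the direct evaluation $\varphi(\alpha)=g\cdot e(-\alpha/2)^{1-g}$ up to sign conventions; this can be mentioned in one sentence.

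There is no serious obstacle here: the whole proof is a one-line geometric series manipulation followed by the half-angle factorization. The only thing worth flagging is the agreement at integer $\alpha$, but since $\varphi$ will be used inside $\Phi_{N}(\alpha,\beta)$ as a product of values at generic real arguments, the formula is effectively needed only on $\mathbb{R}\setminus\mathbb{Z}$ and the extension by continuity is harmless.
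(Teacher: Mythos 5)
Your proof is correct and is exactly the computation the paper alludes to ("an easy computation with the sum formula for geometric progression"): geometric series plus the half-angle factorization $e(x)-1=2i\,e(x/2)\sin\pi x$. The remark about integer $\alpha$ is a harmless aside (in fact the limit matches $g$ exactly, since $e(\frac{g-1}{2}m)=(-1)^{(g-1)m}$ cancels the sign from the sine ratio), and the formula is only ever used where $\sin\pi\alpha\neq 0$.
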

\begin{proof}
This is an easy computation
with the sum formula for geometric progression.
\end{proof}

\begin{proposition}
\label{prop:strong_bound_phi}
For $\alpha\in\mathbb{R}$ and $u\in[0,g]$, we have
\[
\biggl|
\sum_{u\le n<g}
e(n\alpha)
\biggr|
\le
\min\biggl(g,\frac{1}{|\sin\pi\alpha|}\biggr)
\quad\text{and so}\quad
|\varphi(\alpha)|
\le
\min\biggl(g,\frac{1}{|\sin\pi\alpha|}\biggr).
\]
\end{proposition}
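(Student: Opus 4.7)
The plan is to prove the two bounds separately and combine them by taking the minimum. The bound by $g$ is immediate: the sum $\sum_{u\le n<g}e(n\alpha)$ has at most $g-\lceil u\rceil\le g$ terms, each of modulus one, and the triangle inequality finishes the job.

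For the second bound, I would first reduce to the case $\alpha\notin\mathbb{Z}$, since otherwise $|\sin\pi\alpha|=0$ and the minimum on the right equals $g$, which is already covered by the first bound. Setting $m=\lceil u\rceil$, so that the sum runs over the integers $m,m+1,\ldots,g-1$, the standard closed form for a geometric progression gives
\[
\sum_{u\le n<g}e(n\alpha)
=
e(m\alpha)\cdot\frac{e((g-m)\alpha)-1}{e(\alpha)-1}.
\]
Taking moduli, the numerator is bounded by $2$, and using $|e(\alpha)-1|=|e^{2\pi i\alpha}-1|=2|\sin\pi\alpha|$ in the denominator yields exactly the required bound $1/|\sin\pi\alpha|$.

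The bound for $\varphi(\alpha)$ is then just the special case $u=0$, so no additional work is needed. There is no genuine obstacle here: the argument is mechanical manipulation of a geometric sum. The only care needed is to distinguish $\alpha\in\mathbb{Z}$ from $\alpha\notin\mathbb{Z}$ (so that dividing by $e(\alpha)-1$ is legitimate), and to handle the possibly non-integral endpoint $u$ by passing silently to $\lceil u\rceil$.
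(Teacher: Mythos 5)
Your proposal is correct and follows essentially the same route as the paper: the trivial bound by the number of terms, plus the closed form of the geometric sum with denominator $|e(\alpha)-1|=2|\sin\pi\alpha|$ and numerator bounded by $2$. Your version is in fact slightly more careful than the paper's, since you explicitly separate the case $\alpha\in\mathbb{Z}$ and pass to the integer endpoint $\lceil u\rceil$ before applying the formula.
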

\begin{proof}
The bound $\le g$ is trivial.
For the other bound, we have
\[
\biggl|
\sum_{u\le n<g}
e(n\alpha)
\biggr|
=
\biggl|
\frac{e(g\alpha)-e(u\alpha)}{e(\alpha)-1}
\biggr|
\le
\frac{1}{|\sin\pi\alpha|}.
\]
Thus, we obtain the assertion.
\end{proof}

\begin{proposition}
\label{prop:strong_bound_phi_dev}
For $\alpha\in\mathbb{R}$ and $u\in[0,g]$, we have
\[
\biggl|
\sum_{u\le n<g}
ne(n\alpha)
\biggr|
\le
g
\min\biggl(g,\frac{1}{|\sin\pi\alpha|}\biggr),
\quad\text{and so}\quad
|\varphi'(\alpha)|
\le
2\pi g\min\biggl(g,\frac{1}{|\sin\pi\alpha|}\biggr).
\]
\end{proposition}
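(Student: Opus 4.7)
The plan is to derive both inequalities from \cref{prop:strong_bound_phi} via a short partial summation. The trivial estimate $|n|\le g$ immediately gives the bound $g^{2}$, which matches the first factor $g$ on the right-hand side. For the non-trivial case, I would use the standard trick of writing the multiplier $n$ as $\sum_{1\le k\le n}1$ and swapping the order of summation:
\[
\sum_{u\le n<g} n\, e(n\alpha)
=
\sum_{1\le k<g}\sum_{\max(u,k)\le n<g}e(n\alpha).
\]
Since $u\in[0,g]$ and $k\in\{1,\ldots,g-1\}$, the starting index $\max(u,k)$ still lies in $[0,g]$, so \cref{prop:strong_bound_phi} applies to each inner sum and bounds it by $\min(g,1/|\sin\pi\alpha|)$. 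There are fewer than $g$ outer terms, which gives the stated estimate.

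For the derivative bound, I would differentiate the defining finite geometric series $\varphi(\alpha)=\sum_{0\le n<g}e(n\alpha)$ term by term to obtain $\varphi'(\alpha)=2\pi i\sum_{0\le n<g}n\,e(n\alpha)$, and then apply the first part of the proposition with $u=0$. This yields $|\varphi'(\alpha)|\le 2\pi g\min(g,1/|\sin\pi\alpha|)$ as claimed.

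There is no real obstacle here; the proof is essentially a two-line partial summation followed by a one-line corollary. The only mild point of care is confirming that $\max(u,k)$ remains in the interval $[0,g]$ so that the hypothesis of \cref{prop:strong_bound_phi} is satisfied for every inner sum, and keeping the constant $2\pi$ from the differentiation of $e(n\alpha)=\exp(2\pi i n\alpha)$ explicit in the derivative estimate.
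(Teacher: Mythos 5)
Your proof is correct and follows essentially the same route as the paper: writing $n=\sum_{1\le k\le n}1$, swapping the order of summation, applying \cref{prop:strong_bound_phi} to each inner sum over $\max(u,k)\le n<g$, and then deducing the derivative bound by differentiating the finite geometric series term by term. No gaps.
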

\begin{proof}
We have
\begin{align}
\biggl|
\sum_{u\le n<g}
ne(n\alpha)
\biggr|
=
\biggl|
\sum_{u\le n<g}
e(n\alpha)
\sum_{1\le m\le n}
1
\biggr|
=
\biggl|
\sum_{1\le m<g}
\sum_{\max(m,u)\le n<g}
e(n\alpha)
\biggr|
\le
g\max_{0\le v<g}
\biggl|
\sum_{v\le n<g}
e(n\alpha)
\biggr|.
\end{align}
Then the assertion follows from \cref{prop:strong_bound_phi}.
\end{proof}

\begin{proposition}[{\cite[Lemma~5.6]{TuxanidyPanario}}]
\label{prop:bound_phi}
For $\|\alpha\|\le\frac{1}{g}$, we have
\[
|\varphi(\alpha)|
\le
g
\exp
\biggl(
-\frac{\pi^{2}}{6}(g^{2}-1)\|\alpha\|^{2}
\biggr).
\]
\end{proposition}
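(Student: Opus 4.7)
The plan is to reduce the claim to an analysis of the Euler product representation of the sine. By \cref{prop:phi_formula},
\[
|\varphi(\alpha)| = \left|\frac{\sin \pi g\alpha}{\sin \pi \alpha}\right|,
\]
which is $1$-periodic and even in $\alpha$. I may therefore replace $\alpha$ by $\|\alpha\|$ and assume $\alpha \in [0, 1/g]$. The endpoint $\alpha = 1/g$ gives $|\varphi(\alpha)| = 0$ and the inequality is trivial, so assume henceforth that $0 \le \alpha < 1/g$.

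The key step is to invoke Euler's infinite product $\sin \pi x = \pi x\prod_{n\ge 1}(1 - x^2/n^2)$ with $x = g\alpha$ in the numerator and $x = \alpha$ in the denominator, and divide. After cancellation this gives
\[
\frac{|\varphi(\alpha)|}{g} = \prod_{n=1}^{\infty}\frac{1-g^2\alpha^2/n^2}{1-\alpha^2/n^2}.
\]
Under the hypothesis $g\alpha \le 1$ (equivalent to $\|\alpha\|\le 1/g$), every factor on the right lies in $(0,1]$, so each factor has a well-defined non-positive logarithm.

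I then take $\log$, expand via $-\log(1-y) = \sum_{k\ge 1}y^k/k$, and swap the order of summation (legitimate since every term is non-negative) to obtain
\[
\log\frac{g}{|\varphi(\alpha)|} = \sum_{k=1}^{\infty}\frac{\zeta(2k)(g^{2k}-1)}{k}\alpha^{2k}.
\]
The $k=1$ term alone equals $\zeta(2)(g^2-1)\alpha^2 = \pi^2(g^2-1)\|\alpha\|^2/6$, and every subsequent term is non-negative and may be discarded. Exponentiating yields the asserted bound.

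I do not anticipate any serious obstacle. The calculation is essentially mechanical once the Euler product is brought in, and the hypothesis $\|\alpha\|\le 1/g$ enters exactly where it is needed, namely to make every factor of the comparison product non-negative so that the logarithmic expansion converges in the favorable direction. What is pleasant about this approach is that the higher-order Taylor corrections require no estimation at all: they all contribute with the correct sign and can be thrown away for free, which is precisely why the clean constant $\pi^2/6 = \zeta(2)$ appears in the exponent.
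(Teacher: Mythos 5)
Your proof is correct and follows essentially the same route as the paper's: reduce to $\|\alpha\|$ via \cref{prop:phi_formula}, dispose of the endpoint $\|\alpha\|=\frac{1}{g}$ trivially, and use the Euler product for the sine together with the Taylor expansion of $-\log(1-y)$ to write $\log\frac{g}{|\varphi(\alpha)|}$ as the non-negative series $\sum_{k\ge1}\frac{\zeta(2k)}{k}(g^{2k}-1)\|\alpha\|^{2k}$, keeping only the $k=1$ term. Nothing further is needed.
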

\begin{proof}
For $\|\alpha\|=\frac{1}{g}$, \cref{prop:phi_formula} shows $|\varphi(\alpha)|=0$
and the assertion is trivial. Otherwise, by
\[
\sin\pi z
=
\pi z
\prod_{n\ge1}\biggl(1-\frac{z^{2}}{n^{2}}\biggr),\quad
\log(1-x)
=
-\sum_{k=1}^{\infty}\frac{x^{k}}{k}
\quad\text{for $|x|<1$}
\]
and \cref{prop:phi_formula}, since $\|\alpha\|<\frac{1}{g}$, we have
\begin{equation}
|\varphi(\alpha)|
=
\frac{\sin\pi g\|\alpha\|}{\sin\pi\|\alpha\|}
=
g
\prod_{n\ge1}
\biggl(1-\frac{g^{2}\|\alpha\|^{2}}{n^{2}}\biggr)
\biggl(1-\frac{\|\alpha\|^{2}}{n^{2}}\biggr)^{-1}
=
g\exp
\biggl(
-\sum_{k=1}^{\infty}\frac{\zeta(2k)}{k}(g^{2k}-1)\|\alpha\|^{2k}
\biggr).
\end{equation}
Since the terms of the above series are non-negative, the assertion follows.
\end{proof} 

\begin{proposition}[{\cite[Lemme~5]{MauduitRivat:carres}}]
\label{prop:phi_monotonic}
Let $\delta\in[0,\frac{2}{3g}]$.
Then, for $\|\alpha\|\ge\delta$, we have
$|\varphi(\alpha)|\le|\varphi(\delta)|$.
\end{proposition}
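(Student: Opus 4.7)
The plan is to reduce the inequality to a monotonicity property of the real function $f(x)\coloneqq|\sin\pi g x|/|\sin\pi x|$ on $(0,1/2]$ (with $f(0)\coloneqq g$). By \cref{prop:phi_formula} we have $|\varphi(\alpha)|=f(\|\alpha\|)$, so the statement is equivalent to $f(x)\le f(\delta)$ for every $x\in[\delta,1/2]$. I would split this range as $[\delta,1/g]\cup[1/g,1/2]$, the cut being the first zero of $\sin\pi g x$.

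On the first piece, the claim is that $f$ is decreasing on $[0,1/g]$. Using Euler's product $\sin\pi z=\pi z\prod_{n\ge1}(1-z^{2}/n^{2})$ exactly as in the proof of \cref{prop:bound_phi}, one writes
\[
f(x)
=
g\prod_{n\ge1}\frac{n^{2}-g^{2}x^{2}}{n^{2}-x^{2}}
=
g\prod_{n\ge1}\biggl(1-\frac{(g^{2}-1)x^{2}}{n^{2}-x^{2}}\biggr).
\]
For $x\in[0,1/g)$ every factor is positive (it vanishes only at $x=1/g$, $n=1$), and each is visibly decreasing in $x$ because $y\mapsto(g^{2}-1)y/(n^{2}-y)$ is increasing on $[0,n^{2})$. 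Hence $f$ is decreasing on $[0,1/g]$, which handles $x\in[\delta,1/g]$ immediately.

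On the second piece, the trivial bound of \cref{prop:strong_bound_phi} gives $f(x)\le 1/|\sin\pi x|\le 1/\sin(\pi/g)$ since $\sin\pi x$ is increasing on $[0,1/2]$. So it suffices to verify $f(\delta)\ge 1/\sin(\pi/g)$. By the monotonicity just proved and the hypothesis $\delta\le 2/(3g)$,
\[
f(\delta)\ge f(2/(3g))=\frac{\sin(2\pi/3)}{\sin(2\pi/(3g))}=\frac{\sqrt{3}/2}{\sin(2\pi/(3g))},
\]
so the whole matter reduces to the trigonometric inequality
\[
\sin(\pi/g)\ge\frac{2}{\sqrt{3}}\sin(2\pi/(3g))\qquad\text{for every integer }g\ge2.
\]

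Setting $t=\pi/(3g)\in(0,\pi/6]$ and using $\sin3t=\sin t(4\cos^{2}t-1)$ and $\sin2t=2\sin t\cos t$, after dividing by $\sin t>0$ and rearranging, this becomes $(2\cos t-\sqrt{3})(2\sqrt{3}\cos t+1)\ge0$, which holds because $\cos t\ge\cos(\pi/6)=\sqrt{3}/2$ for $g\ge2$ (with equality only at $g=2$). This last quadratic check is the only non-mechanical part of the argument; the monotonicity on the first lobe is transparent from the Euler product, and the bound on $[1/g,1/2]$ uses only the obvious estimate from \cref{prop:strong_bound_phi}.
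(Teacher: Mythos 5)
Your proof is correct and follows essentially the same route as the paper: the same split at $\|\alpha\|=1/g$, monotonicity on $[0,1/g]$ via the Euler product, and on $[1/g,1/2]$ the trivial bound $1/\sin(\pi\|\alpha\|)\le 1/\sin(\pi/g)$ compared against $|\varphi(2/(3g))|$. The only (cosmetic) difference is how the key inequality $\sin(\pi/g)\ge\frac{2}{\sqrt{3}}\sin(\frac{2\pi}{3g})$ is checked — you factor a quadratic in $\cos t$ using the triple- and double-angle formulas, whereas the paper deduces it from the decreasingness of $t\mapsto\sin(\pi t)/\sin(2\pi t/3)$; both verifications are valid.
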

\begin{proof}
We first prove a preliminary estimate.
By the infinite product expansion of the sine function,
\[
\frac{\sin\pi t}{\sin\frac{2\pi t}{3}}
=
\frac{3}{2}\prod_{n\ge1}\frac{1-\frac{t^{2}}{n^{2}}}{1-\frac{4}{9}\frac{t^{2}}{n^{2}}}
=
\frac{3}{2}
\prod_{n\ge1}
\biggl(\frac{9}{4}-\frac{\frac{5}{4}}{1-\frac{4}{9}\frac{t^{2}}{n^{2}}}\biggr)
\]
so that the function
$t\mapsto\frac{\sin\pi t}{\sin\frac{2\pi t}{3}}$
is decreasing for $t\in[0,1]$. We thus have
\[
\biggl(\sin\frac{\pi}{g}\biggr)
\biggl|\varphi\biggl(\frac{2}{3g}\biggr)\biggr|
=
\biggl(\sin\frac{\pi}{g}\biggr)
\frac{\sin\frac{2\pi}{3}}{\sin\frac{2\pi}{3g}}
=
\biggl(\sin\frac{2\pi}{3}\biggr)
\frac{\sin\frac{\pi}{g}}{\sin\frac{2\pi}{3g}}
\ge
\biggl(\sin\frac{2\pi}{3}\biggr)
\frac{\sin\frac{\pi}{2}}{\sin\frac{\pi}{3}}
=1
\]
and so
\begin{equation}
\label{lem:phi_monotonic:pre_ineq}
\biggl|\varphi\biggl(\frac{2}{3g}\biggr)\biggr|
\ge
\biggl(\sin\frac{\pi}{g}\biggr)^{-1}.
\end{equation}
Also, for $0\le t\le\frac{1}{g}$, note that
\[
|\varphi(t)|
=
\frac{\sin\pi gt}{\sin\pi t}
=
g\prod_{n\ge1}\frac{1-\frac{g^{2}t^{2}}{n^{2}}}{1-\frac{t^{2}}{n^{2}}}
\]
is decreasing in $t$.
We now prove the assertion. When $\delta\le\|\alpha\|\le\frac{1}{g}$, we have
\[
|\varphi(\alpha)|
=
|\varphi(\|\alpha\|)|
\le
|\varphi(\delta)|
\]
by the monotonicity of $|\varphi(t)|$ for $t\in[0,\frac{1}{g}]$.
When $\frac{1}{g}\le\|\alpha\|\le\frac{1}{2}$, we have
\[
|\varphi(\alpha)|
=
|\varphi(\|\alpha\|)|
=
\frac{\sin\pi g\|\alpha\|}{\sin\pi\|\alpha\|}
\le
(\sin\pi\|\alpha\|)^{-1}
\le
\biggl(\sin\frac{\pi}{g}\biggr)^{-1}
\le
\biggl|\varphi\biggl(\frac{2}{3g}\biggr)\biggr|
\]
from the  inequality \cref{lem:phi_monotonic:pre_ineq}.
Since $0\le\delta\le\frac{2}{3g}\le\frac{1}{g}$,
we have
\[
|\varphi(\alpha)|
\le
\biggl|\varphi\biggl(\frac{2}{3g}\biggr)\biggr|
\le
|\varphi(\delta)|
\]
by the monotonicity of $|\varphi(t)|$ for $t\in[0,\frac{1}{g}]$.
This completes the proof.
\end{proof}

\begin{lemma}[{\cite[Lemma~4.8]{TuxanidyPanario}}]
\label{lem:consecutive_max}
For any $\alpha,\beta,\kappa,\lambda\in\mathbb{R}$, we have
\[
\max(
\|\alpha g^{\kappa}+\beta g^{\lambda}\|,
\|\alpha g^{\kappa+1}+\beta g^{\lambda-1}\|
)
\ge
\frac{\|\alpha(g^{2}-1)g^{\kappa}\|}{g+1}.
\]
\end{lemma}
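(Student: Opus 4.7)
The plan is to proceed by finding a $\beta$-free integer combination of the two quantities appearing inside the maximum, and then applying the triangle inequality for the distance-to-nearest-integer function $\|\cdot\|$.

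The key algebraic observation is that multiplying $\alpha g^{\kappa+1}+\beta g^{\lambda-1}$ by $g$ and subtracting $\alpha g^{\kappa}+\beta g^{\lambda}$ annihilates the $\beta$-terms:
\[
g(\alpha g^{\kappa+1}+\beta g^{\lambda-1})-(\alpha g^{\kappa}+\beta g^{\lambda})
=\alpha g^{\kappa+2}-\alpha g^{\kappa}
=\alpha(g^{2}-1)g^{\kappa}.
\]
This is the essential identity, and it is what makes the lemma natural: consecutive ``shifts'' of $\alpha$ and $\beta$ differ in a way that depends only on $\alpha$ and the factor $g^{2}-1$.

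Having this identity, I would invoke two standard facts about $\|\cdot\|$: it is subadditive, $\|x+y\|\le\|x\|+\|y\|$, and it is homogeneous with respect to integer multipliers, $\|nx\|\le |n|\cdot\|x\|$ for $n\in\mathbb{Z}$. Applying these to the identity above yields
\[
\|\alpha(g^{2}-1)g^{\kappa}\|
\le g\,\|\alpha g^{\kappa+1}+\beta g^{\lambda-1}\|
+\|\alpha g^{\kappa}+\beta g^{\lambda}\|
\le (g+1)\cdot\max\bigl(\|\alpha g^{\kappa}+\beta g^{\lambda}\|,\,\|\alpha g^{\kappa+1}+\beta g^{\lambda-1}\|\bigr),
\]
and dividing by $g+1$ gives the claimed bound.

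There is no real obstacle here; the only step that requires any thought is spotting the linear combination with integer coefficients $(g,-1)$ that eliminates $\beta$, and noting that the coefficient sum bound $|g|+|{-1}|=g+1$ is exactly the denominator appearing on the right-hand side of the statement. Everything else is a one-line application of the triangle inequality for $\|\cdot\|$.
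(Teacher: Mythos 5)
Your proof is correct and follows exactly the paper's argument: the same $\beta$-eliminating integer combination $g(\alpha g^{\kappa+1}+\beta g^{\lambda-1})-(\alpha g^{\kappa}+\beta g^{\lambda})=\alpha(g^{2}-1)g^{\kappa}$ (up to sign), followed by subadditivity of $\|\cdot\|$ and the bound $\|nx\|\le|n|\,\|x\|$. Nothing further is needed.
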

\begin{proof}
We have
\[
(\alpha g^{\kappa}+\beta g^{\lambda})
-
g(\alpha g^{\kappa+1}+\beta g^{\lambda-1})
=
-\alpha(g^{2}-1)g^{\kappa}.
\]
By the triangle inequality for $\|\ast\|$, we  have
\begin{align}
\|\alpha(g^{2}-1)g^{\kappa}\|
&\le
\|\alpha g^{\kappa}+\beta g^{\lambda}\|
+
g\|\alpha g^{\kappa+1}+\beta g^{\lambda-1}\|\\
&\le
(g+1)\max(
\|\alpha g^{\kappa}+\beta g^{\lambda}\|,
\|\alpha g^{\kappa+1}+\beta g^{\lambda-1}\|
).
\end{align}
Thus the assertion follows.
\end{proof}

\begin{lemma}[{cf.\ \cite[Lemma~4.8]{TuxanidyPanario}}]
\label{lem:consecutive_max_phi}
For any $\alpha,\beta,\kappa,\lambda\in\mathbb{R}$, we have
\[
|\varphi(\alpha g^{\kappa}+\beta g^{\lambda})|
\cdot
|\varphi(\alpha g^{\kappa+1}+\beta g^{\lambda-1})|
\le
g^{2}\exp\biggl(-\frac{\pi^{2}}{6}\frac{g-1}{g+1}\|(g^{2}-1)g^{\kappa}\alpha\|^{2}\biggr).
\]
\end{lemma}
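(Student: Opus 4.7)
The plan is to combine \cref{lem:consecutive_max} with the monotonicity statement \cref{prop:phi_monotonic} and the Gaussian-type bound \cref{prop:bound_phi}, putting all the decay on one of the two factors and trivially bounding the other by $g$.

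Concretely, set
\[
\delta \coloneqq \frac{\|(g^{2}-1)g^{\kappa}\alpha\|}{g+1}.
\]
By \cref{lem:consecutive_max}, at least one of the two quantities $\|\alpha g^{\kappa}+\beta g^{\lambda}\|$ or $\|\alpha g^{\kappa+1}+\beta g^{\lambda-1}\|$ is $\ge\delta$. By symmetry I may assume the first one is $\ge\delta$. The next step is to verify that $\delta\le\frac{2}{3g}$, which follows from $\|\cdot\|\le\frac{1}{2}$ and the elementary inequality $\frac{1}{2(g+1)}\le\frac{2}{3g}$ (equivalent to $3g\le 4(g+1)$). In particular $\delta\le\frac{1}{g}$ as well, so both \cref{prop:phi_monotonic} and \cref{prop:bound_phi} apply at $\delta$.

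Applying \cref{prop:phi_monotonic} with this $\delta$ to the argument $\alpha g^{\kappa}+\beta g^{\lambda}$ gives
\[
|\varphi(\alpha g^{\kappa}+\beta g^{\lambda})|
\le
|\varphi(\delta)|,
\]
and then \cref{prop:bound_phi} at $\delta$ yields
\[
|\varphi(\delta)|
\le
g\exp\biggl(-\frac{\pi^{2}}{6}(g^{2}-1)\delta^{2}\biggr).
\]
A direct computation turns the exponent into the desired form:
\[
(g^{2}-1)\delta^{2}
=
\frac{(g-1)(g+1)}{(g+1)^{2}}\|(g^{2}-1)g^{\kappa}\alpha\|^{2}
=
\frac{g-1}{g+1}\|(g^{2}-1)g^{\kappa}\alpha\|^{2}.
\]
Finally, bounding the remaining factor $|\varphi(\alpha g^{\kappa+1}+\beta g^{\lambda-1})|\le g$ by \cref{prop:strong_bound_phi} and multiplying gives the claim.

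There is no real obstacle here: the lemma is a mechanical assembly of the three preceding results, and the only thing one has to be slightly careful about is that the threshold $\delta$ produced by \cref{lem:consecutive_max} actually lies in the range $[0,\frac{2}{3g}]$ required by \cref{prop:phi_monotonic}. Note also that the argument is oblivious to whether $\alpha(g^{2}-1)g^{\kappa}$ is close to an integer or not: if it is close, both sides of the inequality are essentially trivial; if it is not, the decay is genuine and comes entirely from one of the two consecutive $\varphi$-factors.
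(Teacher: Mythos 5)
Your proof is correct and follows essentially the same route as the paper: apply \cref{lem:consecutive_max} to find one factor whose argument has distance at least $\delta=\|(g^{2}-1)g^{\kappa}\alpha\|/(g+1)$ from the integers, check $\delta\le\frac{2}{3g}$, then chain \cref{prop:phi_monotonic} and \cref{prop:bound_phi} on that factor while bounding the other trivially by $g$. The only cosmetic difference is that the paper phrases the case split via $\min/\max$ of the two distances rather than ``without loss of generality''.
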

\begin{proof}
We have
\begin{equation}
\label{lem:consecutive_max_phi:first}
\begin{aligned}
&|\varphi(\alpha g^{\kappa}+\beta g^{\lambda})
\varphi(\alpha g^{\kappa+1}+\beta g^{\lambda-1})|\\
&=
|\varphi(
\min(
\|\alpha g^{\kappa}+\beta g^{\lambda}\|,
\|\alpha g^{\kappa+1}+\beta g^{\lambda-1}\|
))
\varphi(
\max(
\|\alpha g^{\kappa}+\beta g^{\lambda}\|,
\|\alpha g^{\kappa+1}+\beta g^{\lambda-1}\|
))|\\
&\le
g
|\varphi(
\max(
\|\alpha g^{\kappa}+\beta g^{\lambda}\|,
\|\alpha g^{\kappa+1}+\beta g^{\lambda-1}\|
))|.
\end{aligned}
\end{equation}
By \cref{lem:consecutive_max}, we have
\[
\max(
\|\alpha g^{\kappa}+\beta g^{\lambda}\|,
\|\alpha g^{\kappa+1}+\beta g^{\lambda-1}\|
)
\ge
\frac{\|(g^{2}-1)g^{\kappa}\alpha\|}{g+1}
\]
and also we have
\[
\frac{\|(g^{2}-1)g^{\kappa}\alpha\|}{g+1}
\le
\frac{1}{2g}
\le
\frac{2}{3g}.
\]
Thus, by using \cref{prop:phi_monotonic} with
\[
\delta
\coloneqq
\frac{\|(g^{2}-1)g^{\kappa}\alpha\|}{g+1}
\]
in \cref{lem:consecutive_max_phi:first}, we have
\begin{equation}
\label{lem:consecutive_max_phi:prefinal}
|\varphi(\alpha g^{\kappa}+\beta g^{\lambda})|
\cdot
|\varphi(\alpha g^{\kappa+1}+\beta g^{\lambda-1})|
\le
g\biggl|\varphi\biggl(\frac{\|(g^{2}-1)g^{\kappa}\alpha\|}{g+1}\biggr)\biggr|.
\end{equation}
Since
\[
0
\le
\frac{\|(g^{2}-1)g^{\kappa}\alpha\|}{g+1}
\le
\frac{1}{2},
\]
we have
\[
\biggl\|
\frac{\|(g^{2}-1)g^{\kappa}\alpha\|}{g+1}
\biggr\|
=
\frac{\|(g^{2}-1)g^{\kappa}\alpha\|}{g+1}
\le
\frac{1}{g}.
\]
Thus, by using \cref{prop:bound_phi} in \cref{lem:consecutive_max_phi:prefinal}, we arrive at
\[
|\varphi(\alpha g^{\kappa}+\beta g^{\lambda})|
\cdot
|\varphi(\alpha g^{\kappa+1}+\beta g^{\lambda-1})|
\le
g^{2}\exp\biggl(-\frac{\pi^{2}}{6}\frac{g-1}{g+1}\|(g^{2}-1)g^{\kappa}\alpha\|^{2}\biggr).
\]
This completes the proof.
\end{proof}

\begin{lemma}[{cf.\ \cite[Lemma~2.7]{DMRSS:ReversiblePrime}}]
\label{lem:geometric_progression_mod1}
For $\alpha\in\mathbb{R}\setminus\mathbb{Z}$, we have
\[
\|g^{i_{0}}\alpha\|\ge\frac{1}{g+1}
\quad\text{with}\quad
i_{0}
\coloneqq
\biggl[\frac{\log\frac{g}{(g+1)\|\alpha\|}}{\log g}\biggr].
\]
\end{lemma}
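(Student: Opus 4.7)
The plan is to unpack the definition of $i_0$ and then chase inequalities; once the definition is spelled out, the conclusion drops out. Set $\beta \coloneqq \|\alpha\|$, which is strictly positive because $\alpha \notin \mathbb{Z}$ and at most $\tfrac{1}{2}$ by definition of $\|\cdot\|$.

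First I would note that the definition of $i_{0}$ as a floor translates, after exponentiating, into the two-sided estimate
\[
g^{i_{0}} \le \frac{g}{(g+1)\beta} < g^{i_{0}+1},
\]
which rearranges to
\[
\frac{1}{g+1} < g^{i_{0}}\beta \le \frac{g}{g+1}.
\]
In particular $g^{i_{0}}\beta \in (0,1)$. One also needs $i_{0}\ge 0$ so that $g^{i_{0}}$ is a positive integer; this follows from $\beta\le\tfrac{1}{2}$ and $g\ge 2$, which give $\frac{g}{(g+1)\beta} \ge \frac{2g}{g+1} \ge 1$, so the floor is non-negative.

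Next, since $g^{i_{0}}$ is a non-negative integer, writing $\alpha = m + \delta$ with $m$ the integer nearest to $\alpha$ and $\delta = \pm\beta$, we get $g^{i_{0}}\alpha = g^{i_{0}}m \pm g^{i_{0}}\beta$ and therefore $\|g^{i_{0}}\alpha\| = \|g^{i_{0}}\beta\|$. Because $g^{i_{0}}\beta$ lies in the open interval $(0,1)$, its distance to the nearest integer is $\min(g^{i_{0}}\beta,\, 1 - g^{i_{0}}\beta)$; the two bounds above give $g^{i_{0}}\beta > \tfrac{1}{g+1}$ and $1 - g^{i_{0}}\beta \ge 1 - \tfrac{g}{g+1} = \tfrac{1}{g+1}$, so $\|g^{i_{0}}\alpha\| \ge \tfrac{1}{g+1}$ as required. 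I do not expect any genuine obstacles here; the only point that needs care is that $g^{i_{0}}$ be a (positive) integer, so that it can be factored out of the nearest-integer operation.
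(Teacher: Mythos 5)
Your proof is correct and follows essentially the same route as the paper: exponentiate the floor in the definition of $i_{0}$ to trap $g^{i_{0}}\|\alpha\|$ in $\bigl[\tfrac{1}{g+1},\,1-\tfrac{1}{g+1}\bigr]$, then reduce $\|g^{i_{0}}\alpha\|$ to $\|g^{i_{0}}\|\alpha\|\|$ using the integrality of $g^{i_{0}}$. Your explicit check that $i_{0}\ge 0$ is a detail the paper leaves implicit, but the argument is the same.
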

\begin{proof}
We have
\[
g^{i_{0}}\|\alpha\|
\ge
g^{\frac{\log\frac{g}{(g+1)\|\alpha\|}}{\log g}-1}\|\alpha\|
=
\frac{1}{g}\frac{g}{(g+1)\|\alpha\|}\cdot\|\alpha\|
=
\frac{1}{g+1}
\]
and
\[
g^{i_{0}}\|\alpha\|
\le
g^{\frac{\log\frac{g}{(g+1)\|\alpha\|}}{\log g}}\|\alpha\|
\le
\frac{g}{(g+1)\|\alpha\|}\cdot\|\alpha\|
=
1-\frac{1}{g+1}.
\]
The result now follows since $\|g^{i_{0}}\alpha\|=\|g^{i_{0}}\|\alpha\|\|$.
\end{proof}

\begin{lemma}
\label{lem:F_to_exp}
For $N\in\mathbb{Z}_{\ge4}$ and $\alpha,\beta\in\mathbb{R}$, we have
\[
|\Phi_{N}(\alpha,\beta)|
\le
g^{N-2}
\exp\biggl(
-\frac{\pi^{2}}{12}
\frac{g-1}{g+1}
\sum_{i=1}^{N-3}
\|(g^{2}-1)g^{i}\alpha\|^{2}
\biggr).
\]
\end{lemma}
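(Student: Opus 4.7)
The plan is to exploit \cref{lem:consecutive_max_phi}, which controls a product of two consecutive factors of $\Phi_N$, by arranging the $N-2$ factors of $\Phi_N(\alpha,\beta)$ into overlapping consecutive pairs. Writing $\varphi_i \coloneqq \varphi(\alpha g^i + \beta g^{N-i-1})$, the $i$-th and $(i+1)$-th factors of $\Phi_N$ have exactly the form appearing in \cref{lem:consecutive_max_phi} with $\kappa = i$ and $\lambda = N-i-1$, so
\[
|\varphi_i \varphi_{i+1}|
\le
g^{2}\exp\biggl(-\frac{\pi^{2}}{6}\frac{g-1}{g+1}\|(g^{2}-1)g^{i}\alpha\|^{2}\biggr)
\]
for every $i = 1,\ldots,N-3$. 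Using non-overlapping pairs would cover only half of the indices, producing the sum $\sum_{i}\|(g^{2}-1)g^{i}\alpha\|^{2}$ over every other $i$ and thus losing a factor of $2$ in the exponential savings. To obtain the full sum, I would use instead the pairing identity
\[
\prod_{i=1}^{N-2}|\varphi_{i}|^{2}
=
|\varphi_{1}|\cdot|\varphi_{N-2}|\cdot\prod_{i=1}^{N-3}|\varphi_{i}\varphi_{i+1}|,
\]
which one checks by counting multiplicities: on the right, each intermediate $|\varphi_{i}|$ ($2\le i\le N-3$) appears twice inside the product, while $|\varphi_{1}|$ and $|\varphi_{N-2}|$ appear once there and pick up a second copy from the leading two factors.

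With this identity in hand, I would bound the two boundary factors $|\varphi_{1}|,|\varphi_{N-2}|$ trivially by $g$ (using \cref{prop:strong_bound_phi}) and apply \cref{lem:consecutive_max_phi} to each of the $N-3$ consecutive pairs, getting
\[
\prod_{i=1}^{N-2}|\varphi_{i}|^{2}
\le
g^{2}\prod_{i=1}^{N-3}g^{2}\exp\biggl(-\frac{\pi^{2}}{6}\frac{g-1}{g+1}\|(g^{2}-1)g^{i}\alpha\|^{2}\biggr)
=
g^{2N-4}\exp\biggl(-\frac{\pi^{2}}{6}\frac{g-1}{g+1}\sum_{i=1}^{N-3}\|(g^{2}-1)g^{i}\alpha\|^{2}\biggr),
\]
and then taking square roots would yield the claimed bound. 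The hypothesis $N\ge 4$ ensures $N-3\ge 1$ so that the overlapping-pair identity is non-trivial.

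There is no real obstacle beyond the combinatorial bookkeeping; the crucial gain of the factor $\tfrac{1}{2}$ in the exponent, which reconciles the $\tfrac{\pi^{2}}{12}$ in the conclusion with the $\tfrac{\pi^{2}}{6}$ in \cref{lem:consecutive_max_phi}, comes precisely from the square root taken at the end. The fact that $\beta$ does not appear in the right-hand side is also consistent, since \cref{lem:consecutive_max_phi} already eliminates $\beta$ after pairing. An alternative presentation using \cref{prop:swap} would allow a symmetric bound in $\beta$, but the asymmetric form given here is the one needed subsequently.
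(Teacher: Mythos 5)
Your proof is correct and is essentially the paper's own argument: the paper writes $|\Phi_N|=\prod_i|\varphi_i|^{1/2}\prod_i|\varphi_i|^{1/2}$, shifts the index in one copy, strips the two boundary half-factors trivially, and applies \cref{lem:consecutive_max_phi} to the overlapping consecutive pairs — exactly your identity $\prod_{i=1}^{N-2}|\varphi_i|^2=|\varphi_1||\varphi_{N-2}|\prod_{i=1}^{N-3}|\varphi_i\varphi_{i+1}|$ followed by a square root. The bookkeeping, the source of the factor $\tfrac12$ in the exponent, and the use of $N\ge4$ all match.
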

\begin{proof}
By pairing the consecutive terms, we have
\begin{align}
|\Phi_{N}(\alpha,\beta)|
&=
\prod_{i=1}^{N-2}
|\varphi(
\alpha g^{i}
+
\beta g^{N-i-1}
)|^{\frac{1}{2}}
\prod_{i=1}^{N-2}
|\varphi(
\alpha g^{i}
+
\beta g^{N-i-1}
)|^{\frac{1}{2}}\\
&=
\prod_{i=1}^{N-2}
|\varphi(
\alpha g^{i}
+
\beta g^{N-i-1}
)|^{\frac{1}{2}}
\prod_{i=0}^{N-3}
|\varphi(
\alpha g^{i+1}
+
\beta g^{(N-i-1)-1}
)|^{\frac{1}{2}}.
\end{align}
By estimating the terms for $i=0,N-2$ trivially, we get
\begin{align}
|\Phi_{N}(\alpha,\beta)|
&\le
g
\prod_{i=1}^{N-3}
|\varphi(
\alpha g^{i}
+
\beta g^{N-i-1}
)|^{\frac{1}{2}}
|\varphi(
\alpha g^{i+1}
+
\beta g^{(N-i-1)-1}
)|^{\frac{1}{2}}.
\end{align}
By \cref{lem:consecutive_max_phi}, we obtain the assertion.
\end{proof}

\begin{lemma}[$L^{\infty}$-bound~{(cf.\ \cite[Lemma~2.8]{DMRSS:ReversiblePrime})}]
\label{lem:L_infty_bound_kd}
For $N\in\mathbb{Z}$, $\alpha\in\mathbb{R}$ and $k,\ell,d\in\mathbb{Z}$ with
\[
N\ge4,\quad
d\ge1,\quad
(k,d)=1,\quad
d\nmid (g^{2}-1)g^{N}k,
\]
we have
\[
\Phi_{N}\biggl(\alpha,\frac{k}{d}+\frac{\ell}{g^{3}-g}\biggr)
\ll
g^{N}
\exp\biggl(-c_{\infty}\cdot\frac{N}{\log d}\biggr)
\]
with some constant $c_{\infty}=c_{\infty}(g)\in(0,1)$,
where the implicit constant depends only on $g$.
\end{lemma}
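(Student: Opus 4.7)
My plan is to apply \cref{prop:swap} to swap the two arguments of $\Phi_N$, placing $\beta \coloneqq k/d + \ell/(g^3-g)$ into the first slot, and then invoke the exponential decay estimate of \cref{lem:F_to_exp}, which yields
\[
\biggl|\Phi_N\biggl(\alpha,\frac{k}{d}+\frac{\ell}{g^3-g}\biggr)\biggr|
= |\Phi_N(\beta,\alpha)|
\le g^{N-2}\exp\biggl(-\frac{\pi^2}{12}\frac{g-1}{g+1}\,\Sigma\biggr),
\qquad
\Sigma \coloneqq \sum_{i=1}^{N-3}\|(g^2-1)g^i\beta\|^2.
\]
The key simplification is that $(g^2-1)/(g^3-g) = 1/g$, so $(g^2-1)g^i \cdot \ell/(g^3-g) = g^{i-1}\ell$ is an integer for every $i \ge 1$ and the $\ell$-term disappears from each $\|\cdot\|$. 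Thus $\Sigma = \sum_{i=1}^{N-3}\|g^i\alpha'\|^2$ with $\alpha' \coloneqq (g^2-1)k/d$, and it suffices to show $\Sigma \gg_g N/\log d$.

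To bound $\Sigma$ from below, I would write $\alpha' = a'/d''$ in lowest terms; since $(k,d)=1$, one finds $d'' = d/(d,g^2-1)$ and $(a',d'')=1$. Decompose $d'' = d_g d_1$, where $d_g$ is the largest divisor of $d''$ supported on the prime factors of $g$ and $(d_1,g)=1$. The hypothesis $d \nmid (g^2-1)g^N k$ is equivalent to $d'' \nmid g^N$, which forces either $d_1 \ge 2$, or $d_g$ so large (having a prime power exceeding $g^N$) that $\log d \gg N$ and the claimed bound follows at once from the trivial estimate $|\Phi_N| \le g^{N-2}$. Assuming $d_1 \ge 2$, let $r$ be the least integer with $d_g \mid g^r$; an elementary valuation count gives $r \le \log d/\log 2$. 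For $i \ge r$, the fraction $g^i\alpha'$ reduces to have denominator exactly $d_1$, and hence $\|g^i\alpha'\| \ge 1/d_1$.

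I would then apply \cref{lem:geometric_progression_mod1} to $g^i\alpha'$: it produces some $j \le \log d_1/\log g + O(1)$ with $\|g^{i+j}\alpha'\| \ge 1/(g+1)$. So in every window of length $J = O_g(\log d)$ inside $[r,N-3]$ at least one index contributes $1/(g+1)^2$ to $\Sigma$, giving $\Sigma \gg_g (N-r)/\log d \gg_g N/\log d$ once $N$ is large compared to $\log d$ (otherwise the target bound is again trivial). The main delicacy I foresee lies in the initial interval $i < r$: there $g^i\alpha'$ may still carry a residual $g$-part in its denominator and need not be bounded away from an integer, so one must start the window argument at $i = r$ and verify that the discarded prefix of length $r = O(\log d)$ is negligible relative to $N$. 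This is exactly what the split $d'' = d_g d_1$ and the two-case analysis above are designed to accomplish.
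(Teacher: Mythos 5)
Your argument is correct and follows essentially the same route as the paper: the same reduction via \cref{prop:swap}, \cref{lem:F_to_exp} and the vanishing of the $\ell/(g^{3}-g)$ term, followed by the same partition of $\{1,\dots,N-3\}$ into windows of length $O_{g}(\log d)$, each contributing $\gg_{g}1$ to $\Sigma$ via \cref{lem:geometric_progression_mod1}. The only difference is organizational: the paper sidesteps your $d''=d_{g}d_{1}$ decomposition, the two-case analysis, and the discarded prefix entirely, because at each block start $i=\ell J+1\le N-3$ the number $\alpha_{\ell}=(g^{2}-1)g^{\ell J+1}k/d$ is a non-integral rational with denominator dividing $d$ (as $d\nmid(g^{2}-1)g^{N}k$), so $\|\alpha_{\ell}\|\ge 1/d$ already seeds the lemma with the uniform window length $J=1+[\log\tfrac{gd}{g+1}/\log g]$.
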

\begin{proof}
Note that $d\nmid (g^{2}-1)g^{N}k$ implies $d\ge2$.
By \cref{prop:swap}, \cref{lem:F_to_exp} and
\[
\frac{\ell}{g^{3}-g}(g^{2}-1)g^{i}\in\mathbb{Z}
\quad\text{for $i\ge1$},
\]
we have
\begin{equation}
\label{lem:L_infty_bound_kd:F_to_exp}
\begin{aligned}
\biggl|\Phi_{N}\biggl(\alpha,\frac{k}{d}+\frac{\ell}{g^{3}-g}\biggr)\biggr|
&=
\biggl|\Phi_{N}\biggl(\frac{k}{d}+\frac{\ell}{g^{3}-g},\alpha\biggr)\biggr|\\
&\le
g^{N-2}
\exp\biggl(
-\frac{\pi^{2}}{12}
\frac{g-1}{g+1}
\sum_{i=1}^{N-3}
\biggl\|\frac{(g^{2}-1)g^{i}k}{d}\biggr\|^{2}
\biggr).
\end{aligned}
\end{equation}
Let
\[
J
\coloneqq
1+\biggl[\frac{\log\frac{gd}{g+1}}{\log g}\biggr]\ge1,\quad
L
\coloneqq
\biggl[\frac{N-3}{J}\biggr]
\and
\alpha_{\ell}
\coloneqq
\frac{(g^{2}-1)g^{\ell J+1}k}{d}.
\]
We then have
\begin{equation}
\label{lem:L_infty_bound_kd:inner_sum}
\sum_{i=1}^{N-3}
\biggl\|\frac{(g^{2}-1)g^{i}k}{d}\biggr\|^{2}
\ge
\sum_{0\le\ell<L}
\sum_{\ell J<i\le(\ell+1)J}
\biggl\|\frac{(g^{2}-1)g^{i}k}{d}\biggr\|^{2}
\ge
\sum_{0\le\ell<L}
\sum_{0\le i<J}
\|g^{i}\alpha_{\ell}\|^{2}.
\end{equation}
Since $d\nmid(g^{2}-1)g^{N}k$,
we have $\alpha_{\ell}\not\in\mathbb{Z}$ for all $\ell\in\{0,\ldots,L-1\}$
and so \cref{lem:geometric_progression_mod1} is applicable to the inner sum with $\alpha\coloneqq\alpha_{\ell}$.
Also, the same observation implies $\|\alpha_{\ell}\|\ge\frac{1}{d}$.
For $0\le\ell<L$, let
\[
i_{0,\ell}
\coloneqq
\biggl[\frac{\log\frac{g}{(g+1)\|\alpha_{\ell}\|}}{\log g}\biggr].
\]
We then have
\[
0\le i_{0,\ell}\le\biggl[\frac{\log\frac{gd}{(g+1)}}{\log g}\biggr]=J-1.
\]
Therefore, we can pick up the contribution of $i=i_{0,\ell}$
and use \cref{lem:geometric_progression_mod1} in \cref{lem:L_infty_bound_kd:inner_sum} to get
\begin{equation}
\label{lem:L_infty_bound_kd:inner_sum_lower_bound}
\sum_{i=1}^{N-3}
\biggl\|\frac{(g^{2}-1)g^{i}k}{d}\biggr\|^{2}
\ge
\sum_{0\le\ell<L}
\|g^{i_{0,\ell}}\alpha_{\ell}\|^{2}
\ge
\frac{L}{(g+1)^{2}}.
\end{equation}
By using the estimate
\[
L
\ge
\frac{N-3}{J}-1
\ge
\frac{1}{4}\frac{N}{J}-1
\ge
\frac{1}{4}
\frac{N}{1+\frac{\log d}{\log 2}}
-1
\ge
\frac{\log 2}{8}
\frac{N}{\log d}
-1
\]
in \cref{lem:L_infty_bound_kd:inner_sum_lower_bound}
and combining it with \cref{lem:L_infty_bound_kd:F_to_exp},
we obtain the result.
\end{proof}

\begin{lemma}[$L^{\infty}$-bound]
\label{lem:L_infty_bound}
For $N,\ell,q\in\mathbb{Z}$ with $N\ge 4$ and $q\ge1$, we have
\[
\frac{1}{q}
\sum_{\substack{
0\le k<q\\
q\nmid(g^{2}-1)g^{N}k
}}
\biggl|\Phi_{N}\biggl(\alpha,\frac{k}{q}+\frac{\ell}{g^{3}-g}\biggr)\biggr|
\ll
g^{N}
\exp\biggl(-c_{\infty}\cdot\frac{N}{\log q}\biggr)
\]
with some constant $c_{\infty}=c_{\infty}(g)\in(0,1)$,
where the implicit constant depends only on $g$.
\end{lemma}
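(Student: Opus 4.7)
The plan is to reduce this averaged bound to the pointwise bound of \cref{lem:L_infty_bound_kd} by grouping the $k$'s according to the reduced denominator $d = q/(k,q)$.

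First, for each $k$ with $0 \le k < q$ and $q \nmid (g^2-1)g^N k$, write $k = (k,q)\,k'$ and $d = q/(k,q)$, so that $k/q = k'/d$ with $(k',d)=1$ and $1 \le k' < d$. A short verification shows the forbidden condition translates: $q \mid (g^2-1)g^N k$ if and only if $d \mid (g^2-1)g^N k'$. In particular $d \ge 2$ since $d=1$ would force $q \mid k$ and then $q \mid (g^2-1)g^N k$ trivially, contradicting our hypothesis on $k$. Hence \cref{lem:L_infty_bound_kd} applies with $k \leadsto k'$ and $d$ as above, giving
\[
\biggl|\Phi_{N}\biggl(\alpha,\frac{k}{q}+\frac{\ell}{g^{3}-g}\biggr)\biggr|
=
\biggl|\Phi_{N}\biggl(\alpha,\frac{k'}{d}+\frac{\ell}{g^{3}-g}\biggr)\biggr|
\ll
g^{N}\exp\biggl(-c_{\infty}\cdot\frac{N}{\log d}\biggr).
\]

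Next, I would regroup the sum by $d \mid q$:
\[
\sum_{\substack{0\le k<q\\ q\nmid(g^{2}-1)g^{N}k}}
\biggl|\Phi_{N}\biggl(\alpha,\tfrac{k}{q}+\tfrac{\ell}{g^{3}-g}\biggr)\biggr|
=
\sum_{\substack{d\mid q\\ d\ge 2}}
\sum_{\substack{1\le k'<d\\ (k',d)=1\\ d\nmid(g^{2}-1)g^{N}k'}}
\biggl|\Phi_{N}\biggl(\alpha,\tfrac{k'}{d}+\tfrac{\ell}{g^{3}-g}\biggr)\biggr|.
\]
Insert the pointwise estimate and use $\log d \le \log q$ (so $\exp(-c_{\infty}N/\log d)\le\exp(-c_{\infty}N/\log q)$), and bound the inner $k'$-sum trivially by the count $\phi(d)$ of its full range of reduced residues. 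Finally, using the identity $\sum_{d\mid q}\phi(d)=q$ absorbs the outer sum exactly into the prefactor $1/q$, yielding the desired bound $g^{N}\exp(-c_{\infty}N/\log q)$.

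There is no real obstacle here: the only thing to be careful about is the bookkeeping of the exclusion condition $q\nmid(g^{2}-1)g^{N}k$ when passing to the reduced denominator $d$, and the verification that this translates cleanly to $d\nmid(g^{2}-1)g^{N}k'$ (so that \cref{lem:L_infty_bound_kd} is applicable for every surviving $k'$). Once this is checked, the rest is a standard divisor-rearrangement combined with the monotonicity of $\log$ in the exponent.
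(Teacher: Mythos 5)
Your proposal is correct and is essentially the paper's own argument: classify $k$ by $(k,q)$, pass to the reduced fraction $k'/d$, check that the exclusion condition becomes $d\nmid(g^{2}-1)g^{N}k'$ so that the pointwise bound of \cref{lem:L_infty_bound_kd} applies, then use $\log d\le\log q$ together with $\sum_{d\mid q}\phi(d)=q$. No gaps; the bookkeeping points you flag (the translation of the divisibility condition and the fact that $d\ge2$ for every surviving $k$) are exactly the ones the paper handles.
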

\begin{proof}
By classifying the values of $(k,q)$, we have
\[
\frac{1}{q}
\sum_{\substack{
0\le k<q\\
q\nmid(g^{2}-1)g^{N}k
}}
\biggl|\Phi_{N}\biggl(\alpha,\frac{k}{q}+\frac{\ell}{g^{3}-g}\biggr)\biggr|
=
\frac{1}{q}
\sum_{d\mid q}
\sum_{\substack{
0\le k<q\\
q\nmid(g^{2}-1)g^{N}k\\
(k,q)=d
}}
\biggl|\Phi_{N}\biggl(\alpha,\frac{k}{q}+\frac{\ell}{g^{3}-g}\biggr)\biggr|.
\]
By changing variable via $d\leadsto\frac{q}{d}$ and $k\leadsto(\frac{q}{d})\cdot k$, we have
\[
\frac{1}{q}
\sum_{\substack{
0\le k<q\\
q\nmid(g^{2}-1)g^{N}k
}}
\biggl|\Phi_{N}\biggl(\alpha,\frac{k}{q}+\frac{\ell}{g^{3}-g}\biggr)\biggr|
=
\frac{1}{q}
\sum_{d\mid q}
\astsum_{\substack{
k\ \mod{d}\\
d\nmid(g^{2}-1)g^{N}k
}}
\biggl|\Phi_{N}\biggl(\alpha,\frac{k}{d}+\frac{\ell}{g^{3}-g}\biggr)\biggr|.
\]
By \cref{lem:L_infty_bound_kd}, we have
\[
\frac{1}{q}
\sum_{\substack{
0\le k<q\\
q\nmid(g^{2}-1)g^{N}k
}}
\biggl|\Phi_{N}\biggl(\alpha,\frac{k}{q}+\frac{\ell}{g^{3}-g}\biggr)\biggr|
\ll
g^{N}\exp\biggl(-c_{\infty}\cdot\frac{N}{\log q}\biggr)
\frac{1}{q}
\sum_{d\mid q}
\phi(d)
\ll
g^{N}\exp\biggl(-c_{\infty}\cdot\frac{N}{\log q}\biggr).
\]
This completes the proof.
\end{proof}

\section{The \texorpdfstring{$L^{1}$}{L1}-bound}
\label{sec:L1_bound}
In this section we  prove several $L^{1}$-bounds for $\Phi_{N}(\alpha,\beta)$,
which will be used as ingredients for the minor arc estimate of $\rev{R}_{N}(a,q)$.
We first consider the $L^{1}$-moment
taken over fractions having some power of $g$ as the denominator.
We shall use the constant
\[
C_{g}
\coloneqq
\frac{2}{\pi}\log\cot\frac{\pi}{2g}
+\frac{1}{g\sin\frac{\pi}{2g}}
+1
\]
defined in \cref{def:C_g}. Note that we have
\begin{equation}
\label{Cg:lower_bound}
C_{g}
>
1
\end{equation}
since $\frac{\pi}{2g}\in(0,\frac{\pi}{4}]$.

\begin{lemma}[{cf. \cite[Lemme~6]{MauduitRivat:Gelfond}}]
\label{lem:basic_sum}
For $g\in\mathbb{Z}_{\ge2}$ and $\theta\in\mathbb{R}$, we have
\[
\sum_{0\le h<g}\min\biggl(g,\frac{1}{|\sin\pi(\frac{h}{g}+\theta)|}\biggr)
\le
C_{g}g.
\]
\end{lemma}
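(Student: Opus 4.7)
My plan is to first exploit symmetries: the summand is invariant under $\theta \mapsto \theta + 1/g$ (which just reindexes $h$) and under $\theta \mapsto -\theta$ (by evenness of $|\sin|$), so I reduce to $\theta \in [0, 1/(2g)]$. Writing $t_h = h/g + \theta \in [0, 1)$, I split the sum into three contributions. For the boundary term $h = 0$ (with $t_0 = \theta$ potentially close to $0$) I use the cap: $\min(g, 1/|\sin \pi t_0|) \le g$. For $h = g-1$, using $\|t_{g-1}\| = 1/g - \theta \ge 1/(2g)$, I keep the exact value $\csc(\pi(1/g - \theta))$. For the inner indices $h = 1, \ldots, g-2$, where $t_h \in [1/g, 1 - 1/g]$, I plan to apply Hermite--Hadamard.

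Since $\csc(\pi s)$ is convex on $(0, 1)$ (one computes $(\csc\pi s)'' = \pi^2 \csc\pi s\,(\cot^2 \pi s + \csc^2\pi s) > 0$), Hermite--Hadamard on $I_h = [t_h - 1/(2g), t_h + 1/(2g)] \subset (0,1)$ gives $\csc(\pi t_h) \le g \int_{I_h} \csc(\pi s)\,ds$. The $I_h$ are adjacent and tile $[\theta + 1/(2g), \theta + 1 - 3/(2g)]$, so summing the inner terms yields $\sum_{h=1}^{g-2} \csc(\pi t_h) \le g \int_{\theta + 1/(2g)}^{\theta + 1 - 3/(2g)} \csc(\pi s)\,ds$. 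Combining the three contributions, I arrive at $\sum_{0 \le h < g} \min(g, 1/|\sin\pi(h/g + \theta)|) \le g + G(\theta)$, where $G(\theta) := \csc(\pi(1/g - \theta)) + g \int_{\theta + 1/(2g)}^{\theta + 1 - 3/(2g)} \csc(\pi s)\,ds$. Using the primitive $\int \csc(\pi s)\,ds = (1/\pi)\log\tan(\pi s/2)$, I verify $G(1/(2g)) = \csc(\pi/(2g)) + (2g/\pi)\log\cot(\pi/(2g)) = (C_g - 1)g$, so the lemma reduces to proving $G(\theta) \le G(1/(2g))$ for $\theta \in [0, 1/(2g)]$.

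The main obstacle will be this final inequality. I would differentiate to get
\[
G'(\theta) = \pi \csc(\pi(1/g - \theta))\cot(\pi(1/g - \theta)) + g\bigl[\csc(\pi(3/(2g) - \theta)) - \csc(\pi(1/(2g) + \theta))\bigr].
\]
At $\theta = 1/(2g)$ the bracket vanishes and the first term is positive, so $G'(1/(2g)) > 0$; at $\theta = 0$ the negative bracket dominates (at least for $g \ge 3$), giving $G'(0) \le 0$. A monotonicity analysis should show $G'$ has a unique zero in $(0, 1/(2g))$, so $G$ has a unique interior minimum and attains its maximum on $[0, 1/(2g)]$ at one of the endpoints. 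The endpoint comparison $G(0) \le G(1/(2g))$ can then be reduced, via identities such as $\tan(u - v)\tan(u + v) = (\sin^2 u - \sin^2 v)/(\cos^2 u - \sin^2 v)$, to an elementary trigonometric inequality which in the asymptotic regime amounts to $\log(4/3) < 1$ (clear since $4/3 < e$).
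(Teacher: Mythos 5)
Your proposal follows the paper's proof almost step for step: the same reduction to $\theta\in[0,\frac{1}{2g}]$ by shifting $h$ and reflecting, the same three-way split (cap the $h=0$ term by $g$, keep the $h=g-1$ term exactly, Hermite--Hadamard on the inner indices), and the same reduction to showing that $G(\theta)\le G(\frac{1}{2g})=(C_g-1)g$ on $[0,\frac{1}{2g}]$; your evaluation of $G(\frac{1}{2g})$ via the primitive $\frac{1}{\pi}\log\tan\frac{\pi s}{2}$ is correct. The divergence, and the gap, is in how you handle this last maximization.

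Two issues there. First, the detour through ``$G'$ has a unique zero'' is both unproved and unnecessary: every term of $G$ is a convex function of $\theta$ (a composition of the convex map $s\mapsto\csc\pi s$ with an affine map, respectively an integral of such compositions), so $G$ itself is convex and its maximum over $[0,\frac{1}{2g}]$ is automatically attained at an endpoint. This is exactly the observation the paper makes, and it replaces your entire critical-point analysis. Second, and more seriously, the endpoint comparison $G(0)\le G(\tfrac{1}{2g})$ is the actual content of the lemma (it is what certifies that the constant $C_g$ is the right one), and you only verify it ``in the asymptotic regime'': the computation that it ``amounts to $\log(4/3)<1$'' is a large-$g$ heuristic obtained by replacing $\tan x$ by $x$, not a bound valid for all $g\ge2$. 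The inequality does hold, and the paper proves it cleanly: writing
\[
G(\tfrac{1}{2g})-G(0)
=\csc\tfrac{\pi}{2g}-\csc\tfrac{\pi}{g}
+g\biggl(\int_{1/g}^{3/(2g)}-\int_{1/(2g)}^{1/g}\biggr)\csc(\pi s)\,ds,
\]
bounding the first integral below by $\frac{1}{2}\csc\frac{3\pi}{2g}\cdot\frac1g\cdot g$ and the second above by $\frac{1}{2}\csc\frac{\pi}{2g}$ using monotonicity of $\csc$ near $0$, and then invoking midpoint convexity $\frac12\csc\frac{\pi}{2g}+\frac12\csc\frac{3\pi}{2g}\ge\csc\frac{\pi}{g}$. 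You should replace your asymptotic sketch by an argument of this kind (or make your tangent-identity reduction into explicit inequalities valid for every $g\ge2$); as written, the final step is not established.
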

\begin{proof}
This follows from Lemme~6 of \cite{MauduitRivat:Gelfond}
but, for the ease of readers, we give a complete proof 
here in the form that is enough for us.
Let us write $S$ for the left-hand side of the assertion.
Since we can think of $S$ as a sum over the residues $h\ \mod{g}$,
we can shift $h$ to shift $\theta$ to a real number $\frac{\delta}{g}$ with $|\delta|\le\frac{1}{2}$.
By changing $h\ \mod{g}$ by $-h\ \mod{g}$ if necessary,
we can further assume $\delta\in[0,\frac{1}{2}]$.
By bounding the term with $h=0$ by $g$
and by noting that $0<\frac{h+\delta}{g}<1$ for $1\le h<g$,
we have
\[
S
\le
\sum_{1\le h\le g-1}\frac{1}{\sin\pi(\frac{h+\delta}{g})}
+
g
\le
\sum_{1\le h\le g-2}\frac{1}{\sin\pi(\frac{h+\delta}{g})}
+
\frac{1}{\sin\pi(\frac{1-\delta}{g})}
+
g.
\]
Since the function $x\mapsto\frac{1}{\sin\pi x}$ is convex downwards for $0<x<1$, we have
\[
\frac{1}{\sin\pi(\frac{h+\delta}{g})}
\le
\int_{h-\frac{1}{2}}^{h+\frac{1}{2}}
\frac{du}{\sin\pi(\frac{u+\delta}{g})}
\quad\text{for}\quad
1\le h\le g-2
\]
and so
\[
S
\le
\int_{\frac{1}{2}}^{g-\frac{3}{2}}
\frac{du}{\sin\pi(\frac{u+\delta}{g})}
+
\frac{1}{\sin\pi(\frac{1-\delta}{g})}
+
g
\eqqcolon
S(\delta).
\]
By the convexity of the function $x\mapsto\frac{1}{\sin\pi x}$,
the function $S(\delta)$ is convex downwards with respect to $\delta\in[0,\frac{1}{2}]$.
Thus, its maximum of $S(\delta)$ for $\delta\in[0,\frac{1}{2}]$
is taken at either $\delta=0$ or $\delta=\frac{1}{2}$.
We have
\begin{align}
S(\tfrac{1}{2})-S(0)
&=
\frac{1}{\sin\frac{\pi}{2g}}
-
\frac{1}{\sin\frac{\pi}{g}}
+
\int_{g-\frac{3}{2}}^{g-1}
\frac{du}{\sin\frac{\pi u}{g}}
-
\int_{\frac{1}{2}}^{1}
\frac{du}{\sin\frac{\pi u}{g}}\\
&\ge
\frac{1}{\sin\frac{\pi}{2g}}
-
\frac{1}{\sin\frac{\pi}{g}}
+
\frac{1}{2\sin\frac{3\pi}{2g}}
-
\frac{1}{2\sin\frac{\pi}{2g}}\\
&=
\frac{1}{2\sin\frac{\pi}{2g}}
+
\frac{1}{2\sin\frac{3\pi}{2g}}
-
\frac{1}{\sin\frac{\pi}{g}}
\ge0,
\end{align}
where once again we used the convexity of $x\mapsto\frac{1}{\sin\pi x}$ in the last inequality.
This shows
\[
S
\le
S(\tfrac{1}{2})
=
\int_{1}^{g-1}
\frac{du}{\sin\frac{\pi u}{g}}
+
\frac{1}{\sin\frac{\pi}{2g}}
+
g
=
\frac{2}{\pi}g\log\cot\frac{\pi}{2g}
+
\frac{1}{\sin\frac{\pi}{2g}}
+
g
=
C_{g}g
\]
where we have used $(\log\tan\frac{x}{2})'=\frac{1}{\sin x}$.
This completes the proof.
\end{proof}

The next lemma is comparable to Lemma~5.1 of \cite{Maynard:PnP}.
Our sum is simpler in the sense that we have complete exponential sums $\mod{g}$
though we have to insert a new extra shift $\theta_{i}$.
We also avoid taking the maximum over the less significant digits.
\begin{lemma}
\label{lem:prelim_L1}
For $M\in\mathbb{Z}_{\ge2}$ and a sequence of real numbers $(\theta_{i})_{i=1}^{M-2}$, we have
\[
\sum_{0\le h<g^{M}}
\prod_{i=1}^{M-2}
\min\biggl(g,\frac{1}{|\sin\pi(hg^{-(i+1)}+\theta_{i})|}\biggr)
\le
(C_{g}g)^{M}.
\]
\end{lemma}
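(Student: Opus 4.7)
The plan is to exploit the base-$g$ digit expansion of the summation variable $h$ to decouple the factors in the product, and then apply \cref{lem:basic_sum} iteratively.

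First I would write $h\in\{0,\ldots,g^{M}-1\}$ uniquely as $h=\sum_{j=0}^{M-1}h_{j}g^{j}$ with $h_{0},\ldots,h_{M-1}\in\{0,\ldots,g-1\}$. The key observation is that, for each $i\in\{1,\ldots,M-2\}$,
\[
hg^{-(i+1)}
\equiv
\frac{h_{i}}{g}+\tau_{i}\pmod{1},
\qquad
\tau_{i}\coloneqq\sum_{j=0}^{i-1}h_{j}g^{j-i-1},
\]
so that $\tau_{i}$ depends only on the digits $h_{0},\ldots,h_{i-1}$. Consequently, the $i$-th factor in the product, namely
\[
\min\biggl(g,\frac{1}{|\sin\pi(\frac{h_{i}}{g}+\tau_{i}+\theta_{i})|}\biggr),
\]
depends on $h$ only through $h_{0},h_{1},\ldots,h_{i}$, and its dependence on $h_{i}$ is exactly of the shape to which \cref{lem:basic_sum} applies (with shift parameter $\tau_{i}+\theta_{i}$).

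Next I would unfold the sum over $h$ as an iterated sum over $h_{0},h_{1},\ldots,h_{M-1}$ and peel off the variables starting from the top. The digit $h_{M-1}$ appears in no factor at all, contributing a trivial factor $g$. The digit $h_{M-2}$ appears only in the factor indexed by $i=M-2$, so summing it out using \cref{lem:basic_sum} contributes at most $C_{g}g$, uniformly in the remaining digits. Continuing in this way, each of $h_{M-2},h_{M-3},\ldots,h_{1}$ is summed out via \cref{lem:basic_sum}, each contribution being at most $C_{g}g$ (this is the crucial point: for each step, the shift in \cref{lem:basic_sum} is a constant with respect to the variable being summed). Finally, the digit $h_{0}$ appears only through the $\tau_{i}$'s, which have already been absorbed into the $C_{g}g$ bounds, so the trailing sum over $h_{0}$ just yields another $g$.

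Collecting the bounds gives $g\cdot(C_{g}g)^{M-2}\cdot g=g^{2}(C_{g}g)^{M-2}$, and then the inequality $C_{g}\ge1$ from \cref{Cg:lower_bound} upgrades this to $(C_{g}g)^{M}$. The only thing requiring care is the bookkeeping that confirms the peeling order is valid, i.e.\ that the shift in each application of \cref{lem:basic_sum} is constant with respect to the digit being summed; this is immediate from the triangular dependence $\tau_{i}=\tau_{i}(h_{0},\ldots,h_{i-1})$ above. I do not anticipate a substantive obstacle here, since all the analytic input is already packaged into \cref{lem:basic_sum}.
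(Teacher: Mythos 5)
Your proof is correct and is essentially the paper's own argument: both rest on the base-$g$ digit expansion of $h$, the observation that the $i$-th factor depends on $h_i$ through $h_i/g$ plus a shift determined by the lower digits, and one application of \cref{lem:basic_sum} per digit $h_1,\ldots,h_{M-2}$, with the digits $h_0$ and $h_{M-1}$ handled trivially and $C_g>1$ absorbing the loss. The paper merely packages the peeling as an induction on the most significant digit, whereas you unfold the iterated sum directly; the substance is identical.
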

\begin{proof}
Let $S$ be the left-hand side of the inequality.
By letting
\[
h=ng^{M-1}+h_{-}
\quad\text{with}\quad
n\in\{0,\ldots,g-1\}
\quad\text{and}\quad
h_{-}\in[0,g^{M-1}),
\]
we can  rewrite $S$ as
\[
S
=
\sum_{0\le h_{-}<g^{M-1}}
\sum_{0\le n<g}
\prod_{i=1}^{M-2}
\min\biggl(g,\frac{1}{|\sin\pi(ng^{M-i-2}+h_{-}g^{-(i+1)}+\theta_{i})|}\biggr).
\]
However, in the above product, $ng^{M-i-2}$ is always an integer and so
\begin{align}
S
&=
\sum_{0\le h_{-}<g^{M-1}}
\sum_{0\le n<g}
\prod_{i=1}^{M-2}
\min\biggl(g,\frac{1}{|\sin\pi(h_{-}g^{-(i+1)}+\theta_{i})|}\biggr)\\
&=
g
\sum_{0\le h_{-}<g^{M-1}}
\prod_{i=1}^{M-2}
\min\biggl(g,\frac{1}{|\sin\pi(h_{-}g^{-(i+1)}+\theta_{i})|}\biggr).
\end{align}
Since $C_{g}>1$ by \cref{Cg:lower_bound}, it thus suffices to show
\begin{equation}
\label{lem:prelim_L1:goal}
S_{M}
\coloneqq
\sum_{0\le h<g^{M}}
\prod_{i=1}^{M-1}
\min\biggl(g,\frac{1}{|\sin\pi(hg^{-(i+1)}+\theta_{i})|}\biggr)
\le
(C_{g}g)^{M}
\end{equation}
for $M\in\mathbb{Z}_{\ge1}$ and $(\theta_{i})_{i=1}^{M-1}$. We use induction on $M\in\mathbb{Z}_{\ge1}$.

We first consider the initial case $M=1$. In this case, we have
\[
S_{1}
=
\sum_{0\le h<g}
1
=
g
\le
C_{g}g
\]
since $C_{g}>1$ by \cref{Cg:lower_bound}.
This proves \cref{lem:prelim_L1:goal} for the initial case $M=1$.

We next assume the $M$-th case of \cref{lem:prelim_L1:goal} with $M\ge1$
and show the $(M+1)$-th case of \cref{lem:prelim_L1:goal}.
Let us express the summation variable $h\in[0,g^{M+1})$ as
\begin{equation}
\label{lem:prelim_L1:h_expansion}
h=ng^{M}+h_{-}
\quad\text{with}\quad
n\in\{0,\ldots,g-1\}
\ \text{and}\ 
h_{-}\in[0,g^{M})\cap\mathbb{Z}.
\end{equation}
We then have
\[
S_{M+1}
=
\sum_{0\le h_{-}<g^{M}}
\sum_{0\le n<g}
\prod_{i=1}^{M}
\min\biggl(g,\frac{1}{|\sin\pi(ng^{M-(i+1)}+h_{-}g^{-(i+1)}+\theta_{i})|}\biggr).
\]
In the above product, we have $ng^{M-(i+1)}\in\mathbb{Z}$ for $i=1,\ldots,M-1$. This gives
\begin{align}
S_{M+1}
&=
\sum_{0\le h_{-}<g^{M}}
\prod_{i=1}^{M-1}
\min\biggl(g,\frac{1}{|\sin\pi(hg^{-(i+1)}+\theta_{i})|}\biggr)\\
&\hspace{0.1\textwidth}
\times
\sum_{0\le n<g}
\min\biggl(g,\frac{1}{|\sin\pi(ng^{-1}+h_{-}g^{-(M+1)}+\theta_{M})|}\biggr).
\end{align}
We can then apply \cref{lem:basic_sum} with $\theta=h_{-}g^{-(M+1)}+\theta_{M}$ to the inner sum.
This gives
\[
S_{M+1}
\le
C_{g}g
\sum_{0\le h_{-}<g^{M}}
\prod_{i=1}^{M-1}
\min\biggl(g,\frac{1}{|\sin\pi(h_{-}g^{-(i+1)}+\theta_{i})|}\biggr)
\le
(C_{g}g)^{M+1}
\]
by the induction hypothesis.
This completes the proof.
\end{proof}

\begin{lemma}[{Discrete $L^{1}$-bound}]
\label{lem:L1_discrete}
For $M\in\mathbb{Z}_{\ge3}$, $\theta,\beta\in\mathbb{R}$, we have
\[
\sum_{0\le h<g^{M}}
\biggl|
\Phi_{M}\biggl(\frac{h}{g^{M}}+\theta,\beta\biggr)
\biggr|
\le
(C_{g}g)^{M}.
\]
\end{lemma}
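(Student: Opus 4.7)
The plan is to reduce this directly to \cref{lem:prelim_L1} by using the pointwise bound on $\varphi$ and then reindexing the product.

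First, I would write out the definition of $\Phi_M$ and substitute $\alpha = h/g^M + \theta$ into each factor. This gives
\[
\Phi_{M}\biggl(\frac{h}{g^{M}}+\theta,\beta\biggr)
=
\prod_{i=1}^{M-2}
\varphi\biggl(\frac{h}{g^{M-i}}+\theta g^{i}+\beta g^{M-i-1}\biggr).
\]
Applying the $L^{\infty}$-bound from \cref{prop:strong_bound_phi} to each factor yields
\[
\biggl|\Phi_{M}\biggl(\frac{h}{g^{M}}+\theta,\beta\biggr)\biggr|
\le
\prod_{i=1}^{M-2}
\min\biggl(g,\frac{1}{|\sin\pi(hg^{-(M-i)}+\theta g^{i}+\beta g^{M-i-1})|}\biggr).
\]

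Next, I would reindex the product via $i\leadsto M-i-1$, which is a bijection of $\{1,\ldots,M-2\}$ with itself. After this substitution, the $i$-th factor becomes a term whose argument inside the sine is $hg^{-(i+1)}+\theta_{i}$, where I set $\theta_{i}\coloneqq\theta g^{M-i-1}+\beta g^{i}$. This transforms the right-hand side into exactly the shape required by \cref{lem:prelim_L1}.

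Finally, I would sum over $0\le h<g^{M}$ and apply \cref{lem:prelim_L1} with these choices of $(\theta_{i})_{i=1}^{M-2}$ to obtain the claimed bound $(C_{g}g)^{M}$.

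The proof is essentially bookkeeping: the only subtle point is confirming that the reindexing correctly matches the pattern $hg^{-(i+1)}+\theta_i$ of \cref{lem:prelim_L1}, and that the extra shifts $\theta g^{M-i-1}+\beta g^{i}$ — which depend on $i$ but not on $h$ — are permissible as $\theta_{i}$ in that lemma. Since \cref{lem:prelim_L1} allows arbitrary real shifts $\theta_{i}$, no further work is needed beyond the index calculation.
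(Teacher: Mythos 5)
Your proof is correct and is essentially the paper's own argument: the paper first invokes the symmetry $\Phi_M(\alpha,\beta)=\Phi_M(\beta,\alpha)$ (\cref{prop:swap}) and then bounds each factor, whereas you bound each factor and then perform the reindexing $i\leadsto M-i-1$ directly — but that reindexing is precisely how \cref{prop:swap} is proved, so the two routes coincide. Both land on $\prod_{i=1}^{M-2}\min(g,|\sin\pi(hg^{-(i+1)}+\theta_i)|^{-1})$ with $\theta_i=\theta g^{M-i-1}+\beta g^i$ and conclude by \cref{lem:prelim_L1}.
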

\begin{proof}
By \cref{prop:swap}, we have
\begin{align}
\sum_{0\le h<g^{M}}
\biggl|
\Phi_{M}\biggl(\frac{h}{g^{M}}+\theta,\beta\biggr)
\biggr|
&=
\sum_{0\le h<g^{M}}
\biggl|
\Phi_{M}\biggl(\beta,\frac{h}{g^{M}}+\theta\biggr)
\biggr|\\
&\le
\sum_{0\le h<g^{M}}
\prod_{i=1}^{M-2}
\biggl|\varphi\biggl(
\biggl(\frac{h}{g^{M}}+\theta\biggr)g^{M-i-1}
+
\beta g^{i}
\biggr)\biggr|\\
&=
\sum_{0\le h<g^{M}}
\prod_{i=1}^{M-2}
|\varphi(hg^{-(i+1)}+\theta_{i})|,
\end{align}
where $\theta_{i}\coloneqq\theta g^{M-i-1}+\beta g^{i}$.
By using \cref{prop:strong_bound_phi} and \cref{lem:prelim_L1}, we have
\begin{align}
\sum_{0\le h<g^{M}}
\biggl|
\Phi_{M}\biggl(\frac{h}{g^{M}}+\theta,\beta\biggr)
\biggr|
&\le
\sum_{0\le h<g^{M}}
\prod_{i=1}^{M-2}
\min\biggl(g,\frac{1}{|\sin\pi(hg^{-(i+1)}+\theta_{i})|}\biggr)
\le
(C_{g}g)^{M}.
\end{align}
This completes the proof.
\end{proof}

\begin{lemma}[{Continuous $L^{1}$-bound}]
\label{lem:L1_continuous}
For $M\in\mathbb{Z}_{\ge3}$ and $\beta\in\mathbb{R}$, we have
\[
\int_{0}^{1}
|\Phi_{M}(\alpha,\beta)|
d\alpha
\le
C_{g}^{M}.
\]
\end{lemma}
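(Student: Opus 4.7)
The plan is to reduce the continuous $L^{1}$-bound to the already-established discrete $L^{1}$-bound of \cref{lem:L1_discrete} by a change-of-variables/averaging trick. The key observation is that partitioning $[0,1)$ into the $g^{M}$ subintervals $[\tfrac{h}{g^{M}},\tfrac{h+1}{g^{M}})$ for $h=0,1,\ldots,g^{M}-1$ lets us rewrite the integral as
\[
\int_{0}^{1}|\Phi_{M}(\alpha,\beta)|\,d\alpha
=
\int_{0}^{g^{-M}}\sum_{0\le h<g^{M}}\Bigl|\Phi_{M}\Bigl(\tfrac{h}{g^{M}}+t,\beta\Bigr)\Bigr|\,dt.
\]
The inner sum is precisely the object estimated in \cref{lem:L1_discrete} (with the free shift parameter $\theta$ taken to be $t$), so it is bounded by $(C_{g}g)^{M}$ uniformly in $t\in[0,g^{-M}]$.

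Substituting this pointwise estimate back into the integral and using the length $g^{-M}$ of the outer range gives
\[
\int_{0}^{1}|\Phi_{M}(\alpha,\beta)|\,d\alpha
\le
g^{-M}\cdot(C_{g}g)^{M}
=
C_{g}^{M},
\]
which is exactly the asserted bound. The factor $g^{-M}$ from the short $t$-integration exactly cancels the factor $g^{M}$ produced by \cref{lem:L1_discrete}, leaving only the ``analytic'' constant $C_{g}^{M}$.

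There is essentially no obstacle here: all of the genuine work—estimating the exponential sum and summing over residues modulo powers of $g$—was done when establishing \cref{lem:basic_sum}, \cref{lem:prelim_L1}, and \cref{lem:L1_discrete}. The continuous bound is simply the average form of the discrete bound, obtained because $\Phi_{M}(\alpha,\beta)$ depends on $\alpha$ through $\varphi(\alpha g^{i}+\beta g^{N-i-1})$ for $1\le i\le M-2$ and the factor $g^{M}$ in the discrete sum accounts exactly for one full period's worth of shifts $h/g^{M}$ covering $[0,1)$.
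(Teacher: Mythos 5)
Your proof is correct and is essentially identical to the paper's: both partition $[0,1)$ into the $g^{M}$ intervals of length $g^{-M}$, swap the sum and integral, apply \cref{lem:L1_discrete} uniformly in the shift parameter, and integrate over the short range to cancel the factor $g^{M}$.
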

\begin{proof}
We decompose the interval into $g^{M}$ parts to obtain
\begin{align}
\int_{0}^{1}
|\Phi_{M}(\alpha,\beta)|
d\alpha
&=
\sum_{0\le h<g^{M}}
\int_{0}^{\frac{1}{g^{M}}}
\biggl|\Phi_{M}\biggl(\frac{h}{g^{M}}+\theta,\beta\biggr)\biggr|
d\theta\\
&=
\int_{0}^{\frac{1}{g^{M}}}
\sum_{0\le h<g^{M}}
\biggl|\Phi_{M}\biggl(\frac{h}{g^{M}}+\theta,\beta\biggr)\biggr|
d\theta.
\end{align}
By \cref{lem:L1_discrete}, we obtain
\[
\int_{0}^{1}
|\Phi_{M}(\alpha,\beta)|
d\alpha
\le
(C_{g}g)^{M}
\int_{0}^{\frac{1}{g^{M}}}
d\theta
=
C_{g}^{M}.
\]
This completes the proof.
\end{proof}

\begin{lemma}[{Discrete $L^{1}$-bound for derivative}]
\label{lem:L1_discrete_dev}
For $M\in\mathbb{Z}_{\ge3}$, $\beta\in\mathbb{R}$, we have
\[
\sum_{0\le h<g^{M}}
\biggl|
\frac{\partial \Phi_{M}}{\partial\alpha}\biggl(\frac{h}{g^{M}}+\theta,\beta\biggr)
\biggr|
\le
2\pi g^{M}(C_{g}g)^{M}.
\]
\end{lemma}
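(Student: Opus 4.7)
The plan is to differentiate $\Phi_M(\alpha, \beta)$ in $\alpha$ via the product rule and then reduce the resulting bound to \cref{lem:prelim_L1}, essentially mimicking the proof of \cref{lem:L1_discrete}. Starting from the definition
\[
\Phi_M(\alpha, \beta) = \prod_{i=1}^{M-2} \varphi(\alpha g^i + \beta g^{M-i-1}),
\]
the product and chain rules yield
\[
\frac{\partial \Phi_M}{\partial \alpha}(\alpha, \beta) = \sum_{j=1}^{M-2} g^j\, \varphi'(\alpha g^j + \beta g^{M-j-1}) \prod_{\substack{i=1 \\ i \ne j}}^{M-2} \varphi(\alpha g^i + \beta g^{M-i-1}).
\]

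Next, apply \cref{prop:strong_bound_phi_dev} to the differentiated factor and \cref{prop:strong_bound_phi} to the remaining factors. Both bounds are of the form $\min(g, 1/|\sin \pi \cdot|)$, so every factor $\varphi(\cdot)$ or $\varphi'(\cdot)$ is controlled by the same quantity, with the derivative contributing only an extra multiplier of $2\pi g$. Since the resulting product is independent of $j$, the sum $\sum_{j=1}^{M-2} g^{j+1} = (g^M - g^2)/(g-1) \le g^M$ (valid for $g \ge 2$) factors out, giving
\[
\left| \frac{\partial \Phi_M}{\partial \alpha}(\alpha, \beta) \right| \le 2\pi g^M \prod_{i=1}^{M-2} \min\left( g, \frac{1}{|\sin \pi (\alpha g^i + \beta g^{M-i-1})|} \right).
\]

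To conclude, substitute $\alpha = h/g^M + \theta$ and reindex the product via $k = M - i - 1$, turning the $i$-th argument into $h g^{-(k+1)} + \theta_k$ with $\theta_k := \theta g^{M-k-1} + \beta g^k$; this is the exact form handled by \cref{lem:prelim_L1}. Summing over $h \in \{0, \ldots, g^M - 1\}$ and applying \cref{lem:prelim_L1} bounds the product sum by $(C_g g)^M$, which, multiplied by the prefactor $2\pi g^M$, yields the desired estimate. The whole argument is essentially a differentiated version of the proof of \cref{lem:L1_discrete}; the only detail requiring any attention is verifying that the index substitution puts each argument in the precise shape required by \cref{lem:prelim_L1}, and this is the same substitution already used there, so no new obstacle appears.
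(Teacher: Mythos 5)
Your proposal is correct and follows essentially the same route as the paper: differentiate via the product rule, bound the derivative factor by \cref{prop:strong_bound_phi_dev} and the others by \cref{prop:strong_bound_phi}, pull out the geometric sum $\le g^M$, reindex $i \leadsto M-i-1$, and apply \cref{lem:prelim_L1}. No gaps.
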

\begin{proof}
Recall the definition
\[
\Phi_{M}(\alpha,\beta)
=
\prod_{i=1}^{M-2}
\varphi(\alpha g^{i}+\beta g^{M-i-1}).
\]
By taking its derivative with respect to $\alpha$, we have
\begin{align}
\frac{\partial\Phi_{M}}{\partial\alpha}(\alpha,\beta)
&=
\sum_{i=1}^{M-2}
g^{i}\varphi'(\alpha g^{i}+\beta g^{M-i-1})
\prod_{\substack{j=1\\i\neq j}}^{M-2}
\varphi(\alpha g^{j}+\beta g^{M-j-1}).
\end{align}
By using \cref{prop:strong_bound_phi} and \cref{prop:strong_bound_phi_dev}, we have
\begin{align}
\biggl|\frac{\partial\Phi_{M}}{\partial\alpha}(\alpha,\beta)\biggr|
&\le
2\pi
\sum_{i=0}^{M-2}
g^{i+1}
\prod_{j=1}^{M-2}
\min\biggl(g,\frac{1}{|\sin\pi(\alpha g^{j}+\beta g^{M-j-1})|}\biggr)\\
&\le
2\pi g^{M}
\prod_{i=1}^{M-2}
\min\biggl(g,\frac{1}{|\sin\pi(\alpha g^{i}+\beta g^{M-i-1})|}\biggr)\\
&=
2\pi g^{M}
\prod_{i=1}^{M-2}
\min\biggl(g,\frac{1}{|\sin\pi(\alpha g^{M-i-1}+\beta g^{i})|}\biggr).
\end{align}
By using this estimate, we have
\begin{align}
\sum_{0\le h<g^{M}}
\biggl|
\frac{\partial\Phi_{M}}{\partial\alpha}\biggl(\frac{h}{g^{M}}+\theta,\beta\biggr)
\biggr|
&\le
2\pi 
g^{M}
\sum_{0\le h<g^{M}}
\prod_{i=1}^{M-2}
\min\biggl(g,\frac{1}{|\sin\pi(\frac{h}{g^{M}}+\theta)g^{M-i-1}+\beta g^{i})|}\biggr)\\
&=
2\pi
g^{M}
\sum_{0\le h<g^{M}}
\prod_{i=1}^{M-2}
\min\biggl(g,\frac{1}{|\sin\pi(hg^{-(i+1)}+\theta_{i})|}\biggr)
\end{align}
with $\theta_{i}\coloneqq\theta g^{M-i-1}+\beta g^{i}$.
Then the assertion follows from \cref{lem:prelim_L1}.
\end{proof}

\begin{lemma}[{Continuous $L^{1}$-bound for derivative}]
\label{lem:L1_continuous_dev}
For $M\in\mathbb{Z}_{\ge3}$ and $\beta\in\mathbb{R}$, we have
\[
\int_{0}^{1}
\biggl|\frac{\partial\Phi_{M}}{\partial\alpha}(\alpha,\beta)\biggr|
d\alpha
\le
2\pi g^{M}C_{g}^{M}.
\]
\end{lemma}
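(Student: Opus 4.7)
The plan is to mirror the proof of the continuous $L^{1}$-bound (\cref{lem:L1_continuous}) almost verbatim, now invoking the discrete $L^{1}$-bound for the derivative (\cref{lem:L1_discrete_dev}) in place of \cref{lem:L1_discrete}. The point is that the integrand $\bigl|\frac{\partial \Phi_M}{\partial\alpha}(\alpha,\beta)\bigr|$ is naturally a $1$-periodic function of $\alpha$, and the interval $[0,1]$ can be partitioned into $g^{M}$ subintervals $\bigl[\frac{h}{g^{M}},\frac{h+1}{g^{M}}\bigr)$ indexed by $h\in\{0,\ldots,g^{M}-1\}$.

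First I would parametrize the generic point of the $h$-th subinterval as $\alpha=\frac{h}{g^{M}}+\theta$ with $\theta\in[0,g^{-M})$, obtaining
\[
\int_{0}^{1}\biggl|\frac{\partial\Phi_{M}}{\partial\alpha}(\alpha,\beta)\biggr|d\alpha
=
\int_{0}^{1/g^{M}}\sum_{0\le h<g^{M}}\biggl|\frac{\partial\Phi_{M}}{\partial\alpha}\biggl(\frac{h}{g^{M}}+\theta,\beta\biggr)\biggr|d\theta,
\]
which is legitimate after exchanging the finite sum and the integral. At this point the inner sum is precisely the object estimated in \cref{lem:L1_discrete_dev}, which is valid for every fixed $\theta$ and $\beta$.

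Applying \cref{lem:L1_discrete_dev} gives a uniform bound of $2\pi g^{M}(C_{g}g)^{M}$ for the inner sum, independent of $\theta$. Integrating this constant over an interval of length $g^{-M}$ produces exactly a factor $g^{-M}$, yielding
\[
\int_{0}^{1}\biggl|\frac{\partial\Phi_{M}}{\partial\alpha}(\alpha,\beta)\biggr|d\alpha
\le
2\pi g^{M}(C_{g}g)^{M}\cdot\frac{1}{g^{M}}
=
2\pi g^{M}C_{g}^{M},
\]
which is the desired inequality.

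No significant obstacle is expected, since all the hard analytic work — namely the convexity/majorization of $|\varphi|$ and $|\varphi'|$ by the $\min(g,|\sin\pi\cdot|^{-1})$ functions and the combinatorial bound in \cref{lem:prelim_L1} — has already been absorbed into \cref{lem:L1_discrete_dev}. The only care required is to verify that the partition-and-swap step goes through unchanged, which it does because the integrand is a finite product of smooth bounded functions, and because the shifted variable $\theta$ appears symmetrically in all factors just as in the proof of \cref{lem:L1_continuous}.
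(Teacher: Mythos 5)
Your proposal is correct and coincides with the paper's own proof: decompose $[0,1]$ into $g^{M}$ subintervals of length $g^{-M}$, swap the finite sum with the integral, apply \cref{lem:L1_discrete_dev} uniformly in $\theta$, and integrate the resulting constant $2\pi g^{M}(C_{g}g)^{M}$ over an interval of length $g^{-M}$. Nothing is missing.
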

\begin{proof}
We decompose the interval into $g^{M}$ parts to obtain
\begin{align}
\int_{0}^{1}
\biggl|\frac{d\Phi_{M}}{d\alpha}(\alpha,\beta)\biggr|
d\alpha
&=
\sum_{0\le h<g^{M}}
\int_{0}^{\frac{1}{g^{M}}}
\biggl|\frac{\partial\Phi_{M}}{\partial\alpha}\biggl(\frac{h}{g^{M}}+\theta,\beta\biggr)\biggr|
d\theta\\
&=
\int_{0}^{\frac{1}{g^{M}}}
\sum_{0\le h<g^{M}}
\biggl|\frac{\partial\Phi_{M}}{\partial\alpha}\biggl(\frac{h}{g^{M}}+\theta,\beta\biggr)\biggr|
d\theta.
\end{align}
By \cref{lem:L1_discrete_dev}, we obtain
\[
\int_{0}^{1}
\biggl|\frac{\partial\Phi_{M}}{\partial\alpha}(\alpha,\beta)\biggr|
d\alpha
\le
2\pi
g^{M}(C_{g}g)^{M}
\int_{0}^{\frac{1}{g^{M}}}
d\theta
=
2\pi g^{M}
C_{g}^{M}.
\]
This completes the proof.
\end{proof}

\section{Large sieve estimates}
\label{sec:large_sieve}
In this section we consider the $L^{1}$-moment taken over Farey fractions.
Our main tool, as in the estimation of various large sieves,
is the Gallagher--Sobolev inequality.

\begin{lemma}[Galllagher--Sobolev inequality]
\label{lem:Gallagher_Sobolev}
Let $f\colon[0,1]\to\mathbb{C}$ be a function of class $C^{1}$ of period $1$,
$\delta>0$ and $(\alpha_{i})_{i=1}^{R}$ be a sequence of real numbers
which is $\delta$-spaced $\mod{1}$, i.e.\ 
\[
\|\alpha_{i}-\alpha_{j}\|\ge\delta
\quad\text{for any}\quad
i,j\in\{1,\ldots,R\}
\ \text{with}\ 
i\neq j.
\]
We then have
\[
\sum_{i=1}^{R}|f(\alpha_{i})|
\le
\frac{1}{\delta}\int_{0}^{1}|f(\alpha)|d\alpha
+
\frac{1}{2}
\int_{0}^{1}|f'(\alpha)|d\alpha.
\]
\end{lemma}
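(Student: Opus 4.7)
The plan is to apply a standard Sobolev-type local estimate around each $\alpha_{i}$ and then sum, exploiting the $\delta$-spacing to ensure that the neighbourhoods involved are pairwise disjoint modulo $1$. I will work on the circle $\mathbb{R}/\mathbb{Z}$, identifying $f$ with its periodic lift, so that all intervals below are understood modulo $1$.

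First, for each $i$, I would fix the interval $I_{i}\coloneqq[\alpha_{i}-\delta/2,\alpha_{i}+\delta/2]$ and apply the fundamental theorem of calculus: for any $\beta\in I_{i}$,
\[
f(\alpha_{i})=f(\beta)+\int_{\beta}^{\alpha_{i}}f'(t)\,dt.
\]
Taking absolute values and averaging over $\beta\in I_{i}$ yields
\[
|f(\alpha_{i})|\le\frac{1}{\delta}\int_{I_{i}}|f(\beta)|\,d\beta+\frac{1}{\delta}\int_{I_{i}}\biggl|\int_{\beta}^{\alpha_{i}}f'(t)\,dt\biggr|d\beta.
\]

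Second, I would estimate the derivative term by Fubini. For fixed $t\in I_{i}$, the set of $\beta\in I_{i}$ such that $t$ lies between $\beta$ and $\alpha_{i}$ has measure exactly $\delta/2-|t-\alpha_{i}|$, which is bounded by $\delta/2$. Swapping the order of integration therefore gives
\[
\frac{1}{\delta}\int_{I_{i}}\biggl|\int_{\beta}^{\alpha_{i}}f'(t)\,dt\biggr|d\beta\le\frac{1}{\delta}\int_{I_{i}}(\delta/2-|t-\alpha_{i}|)|f'(t)|\,dt\le\frac{1}{2}\int_{I_{i}}|f'(t)|\,dt.
\]
This already produces the explicit constant $\tfrac{1}{2}$ in front of the derivative integral in the target inequality.

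Finally, I would sum the resulting pointwise bound over $i=1,\ldots,R$. The hypothesis that $(\alpha_{i})$ is $\delta$-spaced modulo $1$ means precisely that the intervals $I_{i}$ of length $\delta$ are pairwise disjoint in $\mathbb{R}/\mathbb{Z}$, so the right-hand side sums to integrals over $[0,1]$ and yields the claim. There is no essential obstacle here: the argument is routine once the interval length is chosen to be $\delta$ (matching the spacing) and centred at $\alpha_{i}$ (to make the kernel $\delta/2-|t-\alpha_{i}|$ appear, giving the sharp constant $\tfrac{1}{2}$). The only minor care needed is the wrap-around of the $I_{i}$ near the boundary of $[0,1]$, which disappears as soon as one works on $\mathbb{R}/\mathbb{Z}$ throughout.
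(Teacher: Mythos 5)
Your proof is correct. The paper does not actually prove this lemma---it simply cites Lemma~1.2 of Montgomery's \emph{Ten Lectures} (the reference \texttt{Montgomery:Topics})---and your argument is precisely the standard proof given there: the fundamental theorem of calculus on an interval of length $\delta$ centred at $\alpha_{i}$, averaging in $\beta$, Fubini with the kernel $\delta/2-|t-\alpha_{i}|$ to extract the constant $\tfrac{1}{2}$, and disjointness of the intervals modulo $1$ from the $\delta$-spacing. The only pedantic caveat is that one needs $\delta\le 1$ for each $I_{i}$ to inject into the circle (and indeed the statement is false for, say, $R=1$, $f\equiv 1$, $\delta>1$); this is implicit in the lemma as used in the paper, where $\delta=\tfrac{1}{2}R^{-2}$.
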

\begin{proof}
See Lemma~1.2 of \cite[p.~2]{Montgomery:Topics}.
\end{proof}

\begin{lemma}[Large sieve estimate]
\label{lem:large_sieve_estimate}
For $M\in\mathbb{N}_{\ge3}$, $R,\theta,\beta\in\mathbb{R}$ with $R\ge2$,
we have
\[
\sum_{r\le R}
\astsum_{b\ \mod{r}}
\max_{|\eta|\le\frac{1}{4}R^{-2}}
\biggl|
\Phi_{M}\biggl(\frac{b}{r}+\theta+\eta,\beta\biggr)
\biggr|
\ll
(g^{M}+R^{2})C_{g}^{M}.
\]
\end{lemma}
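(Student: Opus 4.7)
The plan is to apply the Gallagher--Sobolev inequality (\cref{lem:Gallagher_Sobolev}) to the function $f(\alpha) \coloneqq \Phi_{M}(\alpha,\beta)$ (with $\beta$ fixed), after choosing for each Farey-type fraction a specific $\eta$ realizing the maximum. Since the function $\eta \mapsto |\Phi_{M}(b/r+\theta+\eta,\beta)|$ is continuous on the compact interval $[-\frac14 R^{-2}, \frac14 R^{-2}]$, the maximum is attained at some $\eta_{r,b}$; set $\alpha_{r,b} \coloneqq b/r + \theta + \eta_{r,b}$.

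First I would verify that the points $\{\alpha_{r,b}\}$ are $\frac{1}{2}R^{-2}$-spaced modulo $1$. For two distinct pairs $(r,b)\neq(r',b')$ with $r,r'\le R$, $(b,r)=(b',r')=1$, the classical estimate for Farey fractions gives
\[
\Bigl\|\frac{b}{r}-\frac{b'}{r'}\Bigr\|\ge\frac{1}{rr'}\ge\frac{1}{R^{2}}.
\]
Since the perturbations $\eta_{r,b}, \eta_{r',b'}$ each have absolute value at most $\frac{1}{4}R^{-2}$, the triangle inequality yields $\|\alpha_{r,b}-\alpha_{r',b'}\|\ge R^{-2}-\frac{1}{2}R^{-2}=\frac{1}{2}R^{-2}$, as required.

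Next I would apply \cref{lem:Gallagher_Sobolev} with $\delta=\frac{1}{2}R^{-2}$ to get
\[
\sum_{r\le R}\astsum_{b\ \mod{r}}|\Phi_{M}(\alpha_{r,b},\beta)|
\le
2R^{2}\int_{0}^{1}|\Phi_{M}(\alpha,\beta)|\,d\alpha
+
\frac{1}{2}\int_{0}^{1}\biggl|\frac{\partial\Phi_{M}}{\partial\alpha}(\alpha,\beta)\biggr|\,d\alpha.
\]
Now the two integrals are exactly those bounded by the continuous $L^{1}$-estimates already proven: \cref{lem:L1_continuous} gives the first integral $\le C_{g}^{M}$, while \cref{lem:L1_continuous_dev} gives the second $\le 2\pi g^{M}C_{g}^{M}$. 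Combining yields
\[
\sum_{r\le R}\astsum_{b\ \mod{r}}\max_{|\eta|\le\frac14 R^{-2}}|\Phi_{M}(b/r+\theta+\eta,\beta)|
\le
2R^{2}C_{g}^{M}+\pi g^{M}C_{g}^{M}
\ll
(g^{M}+R^{2})C_{g}^{M},
\]
which is the desired bound.

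There is essentially no hard step here: the result is a textbook large sieve argument. The only point requiring a modicum of care is the separation check after perturbation, and that is why the constraint $|\eta|\le\frac{1}{4}R^{-2}$ was chosen (with $\frac14$ rather than something closer to $\frac12$) in the statement. The real work was done earlier in establishing the continuous $L^{1}$-bounds for $\Phi_{M}$ and its derivative in \cref{sec:L1_bound}.
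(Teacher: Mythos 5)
Your proof is correct and follows essentially the same route as the paper: choose $\eta_{b,r}$ attaining the maximum, verify the points are $\tfrac12 R^{-2}$-spaced modulo $1$ via the Farey spacing and the triangle inequality, and apply the Gallagher--Sobolev inequality together with the continuous $L^{1}$-bounds of \cref{lem:L1_continuous} and \cref{lem:L1_continuous_dev}. No gaps.
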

\begin{proof}
For $b,r$ appearing in the left-hand side,
by the continuity in $\eta$,
we can take $\eta_{b,r}$ such that
\begin{equation}
\label{lem:large_sieve_estimate:eta_aq}
\max_{|\eta|\le\frac{1}{4}R^{-2}}
\biggl|
\Phi_{M}\biggl(\frac{b}{r}+\theta+\eta,\beta\biggr)
\biggr|
=
\biggl|
\Phi_{M}\biggl(\frac{b}{r}+\theta+\eta_{b,r},\beta\biggr)
\biggr|
\and
|\eta_{b,r}|\le\tfrac{1}{4}R^{-2}.
\end{equation}
For any two distinct $(b,r)$ and $(b',r')$ in the above sum and $m\in\mathbb{Z}$,
\begin{align}
\biggl|
\biggl(\frac{b}{r}+\theta+\eta_{b,r}\biggr)
-
\biggl(\frac{b'}{r'}+\theta+\eta_{b',r'}\biggr)
-m
\biggr|
&\ge
\biggl|\frac{b}{r}-\frac{b'}{r'}-m\biggr|
-|\eta_{b,r}|-|\eta_{b',r'}|\\
&
\ge
\frac{1}{rr'}-\frac{1}{2R^{2}}
\ge
\frac{1}{2R^{2}}
\end{align}
and so the real numbers
\[
\frac{b}{r}+\theta+\eta_{b,r}
\]
appearing in the sum are $\frac{1}{2}R^{-2}$-spaced $\mod{1}$.
Thus, \cref{lem:Gallagher_Sobolev} and \cref{lem:large_sieve_estimate:eta_aq} give
\begin{align}
\sum_{r\le R}
\astsum_{b\ \mod{r}}
\max_{|\eta|\le\frac{1}{4}R^{-2}}
\biggl|
\Phi_{M}\biggl(\frac{b}{r}+\theta+\eta,\beta\biggr)
\biggr|
&=
\sum_{r\le R}
\astsum_{b\ \mod{r}}
\biggl|
\Phi_{M}\biggl(\frac{b}{r}+\theta+\eta_{b,r},\beta\biggr)
\biggr|\\
&\ll
R^{2}\int_{0}^{1}|\Phi_{M}(\alpha,\beta)|d\alpha
+
\int_{0}^{1}\biggl|\frac{\partial\Phi_{M}}{\partial\alpha}(\alpha,\beta)\biggr|d\alpha.
\end{align}
Then the assertion follows from \cref{lem:L1_continuous} and \cref{lem:L1_continuous_dev}.
\end{proof}

\section{Hybrid bound}
\label{sec:hybrid_bound}
We now combine the results of \cref{sec:L1_bound} and \cref{sec:large_sieve}
to obtain a hybrid $L^{1}$-bound.
\begin{lemma}[Hybrid bound]
\label{lem:hybrid}
For $N\in\mathbb{Z}_{\ge8}$, $R\ge1$, $H\ge 4g^{8}$ with
\[
R^{2}H\le 4g^{N}
\]
and $\beta\in\mathbb{R}$, we have
\[
\sum_{r\le R}\ 
\astsum_{b\ \mod{r}}
\sum_{\substack{
g^{N}|\eta|\le H\\
\frac{g^{N}b}{r}+g^{N}\eta\in\mathbb{Z}
}}
\biggl|\Phi_{N}\biggl(\frac{b}{r}+\eta,\beta\biggr)\biggr|
\ll
g^{N}
(R^{2}H)^{\alpha_{g}},
\]
where the exponent $\alpha_{g}$ is given by
\[
\alpha_{g}
\coloneqq
\frac{\log C_{g}}{\log g}
=
\frac{\log(\frac{2}{\pi}\log\cot\frac{\pi}{2g}+\frac{1}{g\sin\frac{\pi}{2g}}+1)}{\log g}
\]
and the implicit constant depends only on $g$.
\end{lemma}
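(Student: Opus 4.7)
Plan: The target bound $g^{N}(R^{2}H)^{\alpha_{g}}$ rewrites as $g^{N}C_{g}^{M}$ where $g^{M}\approx R^{2}H$ (since $C_{g}=g^{\alpha_{g}}$), which strongly suggests applying \cref{prop:break_half} to factor $\Phi_{N}=\Phi_{M}\cdot\Phi_{N-M+2}$ with precisely this choice of $M$. The strategy is to have the $\Phi_{M}$ factor contribute the $C_{g}^{M}$ saving through an $L^{1}$-type estimate (tied to the Farey structure), while the $\Phi_{N-M+2}$ factor, being periodic in $m$ modulo $g^{N-M+2}$, contributes the remaining $g^{N-M}\approx g^{N}/(R^{2}H)$. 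The hypotheses $H\ge 4g^{8}$, $R^{2}H\le 4g^{N}$ and $N\ge 8$ guarantee that $M$ sits comfortably inside the admissible range $3\le M\le N-1$ of \cref{prop:break_half}.

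Substituting $\alpha=m/g^{N}=b/r+\eta$ into the factorisation yields
\[
\Phi_{N}(m/g^{N},\beta)
=
\Phi_{M}(m/g^{N},g^{N-M}\beta)\cdot\Phi_{N-M+2}(m/g^{N-M+2},\beta),
\]
and the second factor depends on $m$ only through the residue $m_{1}\coloneqq m\bmod g^{N-M+2}$. The plan is to split the triple sum according to this residue class. For each fixed $m_{1}$, the remaining Farey-sensitive piece involving $\Phi_{M}$ should be controlled via the large sieve (\cref{lem:large_sieve_estimate}): since $|\eta|\le H/g^{N}\le 4/R^{2}$ by the hypothesis $R^{2}H\le 4g^{N}$, the $\eta$-range can be covered by $O(1)$ sub-intervals of length $\tfrac{1}{4}R^{-2}$, giving a bound of the form $(g^{M}+R^{2})C_{g}^{M}\ll R^{2}HC_{g}^{M}$ per sub-slice. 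The combinatorial count of lifts $m=m_{1}+jg^{N-M+2}$ of a given residue that lie within the Farey window $|m-g^{N}b/r|\le H$ is $O(1+H/g^{N-M+2})$, and the sum of the $\Phi_{N-M+2}$-mass over residues should be handled either via the discrete $L^{1}$-bound \cref{lem:L1_discrete} as a subsum or by the trivial $L^{\infty}$-bound $|\Phi_{N-M+2}|\le g^{N-M}$, whichever combines cleanly with the large sieve at scale $M$.

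The main obstacle is the bookkeeping in the combination step. Applying \cref{lem:L1_continuous} to both factors independently would yield a spurious $C_{g}^{N+2}$ in place of the desired $C_{g}^{M}$, while a naive $\max$--$\sum$ separation using an $L^{\infty}$-bound on $\Phi_{N-M+2}$ introduces an extraneous factor of $H$. The correct bookkeeping must weave the large sieve at scale $M$ with the combinatorics of how the $g^{N-M+2}$-periodicity of $\Phi_{N-M+2}$ interacts with Farey windows of length $2H$, and exploit the precise cancellation provided by $g^{M}\approx R^{2}H$ so that $(g^{M}+R^{2})C_{g}^{M}\cdot g^{N-M}\asymp g^{N}(R^{2}H)^{\alpha_{g}}$ without intermediate losses.
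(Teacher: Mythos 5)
Your proposal correctly identifies the target shape $g^{N}C_{g}^{M}$ with $g^{M}\asymp R^{2}H$ and the relevant tools (\cref{prop:break_half}, \cref{lem:large_sieve_estimate}, \cref{lem:L1_discrete}), but the two-factor decomposition $\Phi_{N}=\Phi_{M}\cdot\Phi_{N-M+2}$ at the single scale $g^{M}\asymp R^{2}H$ cannot be made to close, and your final paragraph in effect concedes this: you observe that a max--sum separation loses a factor of $H$, that summing $\Phi_{N-M+2}$ over residues via \cref{lem:L1_discrete} produces a spurious $C_{g}^{N-M}$, and then assert that ``the correct bookkeeping must weave'' the two together without exhibiting such a bookkeeping. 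The structural obstruction is that a single factor $\Phi_{M}$ cannot serve both summations at once: the Gallagher--Sobolev large sieve needs one well-spaced point per Farey fraction (so the $\approx H$ values of $\eta$ per fraction must be collapsed by a maximum), after which the $\approx H$ points per fraction must be absorbed by the complementary factor $\Phi_{N-M+2}$ --- and that factor, whether bounded trivially or in $L^{1}$ over its residues, cannot simultaneously produce the needed $g^{N-M}\approx g^{N}/(R^{2}H)$ \emph{and} avoid an extra $C_{g}^{N-M}$ or $H$. Chasing either route gives bounds like $R^{2}H^{2}(R^{2}H)^{\alpha_{g}}C_{g}^{N-M}$, which for $R=1$, $H\asymp g^{N}$ is of size $g^{2N}$, far above the claimed $g^{N}(R^{2}H)^{\alpha_{g}}$.

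The missing idea is to apply \cref{prop:break_half} \emph{twice}, producing three factors at two separate scales: a factor $\Phi_{L}\bigl(\frac{b}{r}+\eta,g^{N-L}\beta\bigr)$ with $g^{L}\asymp R^{2}$, a factor $\Phi_{M}\bigl(g^{N-M}(\frac{b}{r}+\eta),\beta\bigr)=\Phi_{M}(h/g^{M},\beta)$ with $g^{M}\asymp H$, and a middle factor bounded trivially by $g^{N-L-M}$. The $M$-factor is $g^{M}$-periodic in $h=g^{N}b/r+g^{N}\eta$, so the sum over the $\approx H$ values of $h$ in each Farey window is covered by $\ll 1+Hg^{-M}\ll 1$ complete periods and handled by \cref{lem:L1_discrete}, contributing $C_{g}^{M}\approx H^{\alpha_{g}}$; the $L$-factor is then maximized over $\eta$ and fed to \cref{lem:large_sieve_estimate} over the Farey fractions, contributing $(g^{L}+R^{2})C_{g}^{L}\approx R^{2}(R^{2})^{\alpha_{g}}$. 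Each of the two summations is thus paired with its own factor of $\Phi$ and its own power of $C_{g}$, and the product reassembles to $g^{N}C_{g}^{L+M}\asymp g^{N}(R^{2}H)^{\alpha_{g}}$ with no loss. Without this second splitting your argument does not go through.
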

\begin{proof}
Let $S$ be the left-hand side of the assertion.
We take $L,M\in\mathbb{Z}$ chosen later such that
\begin{equation}
\label{lem:hybrid:LM_range}
3\le L \le N-M+1
\and
3\le M \le N-1.
\end{equation}
We then decompose $\Phi_{N}$ by using \cref{prop:break_half} as
\begin{align}
&\Phi_{N}\biggl(\frac{b}{r}+\eta,\beta\biggr)\\
&=
\Phi_{N-M+2}\biggl(\frac{b}{r}+\eta,g^{M-2}\beta\biggr)
\Phi_{M}\biggl(g^{N-M}\biggl(\frac{b}{r}+\eta\biggr),\beta\biggr)\\
&=
\Phi_{L}\biggl(\frac{b}{r}+\eta,g^{N-L}\beta\biggr)
\Phi_{N-(L+M)+4}\biggl(g^{L-2}\biggl(\frac{b}{r}+\eta\biggr),g^{M-2}\beta\biggr)
\Phi_{M}\biggl(g^{N-M}\biggl(\frac{b}{r}+\eta\biggr),\beta\biggr).
\end{align}
By using the trivial bound
\[
\Phi_{N-(L+M)+4}\biggl(g^{L-2}\biggl(\frac{b}{r}+\eta\biggr),g^{M-2}\beta\biggr)
\ll
g^{N-(L+M)}
\]
and writing $\widetilde{\beta}\coloneqq g^{N-L}\beta$, we have
\begin{equation}
\label{lem:hybrid:after_Phi0_trivial}
S
\ll
g^{N-(L+M)}
\sum_{r\le R}
\astsum_{b\ \mod{r}}
\sum_{\substack{
g^{N}|\eta|\le H\\
\frac{g^{N}b}{r}+g^{N}\eta\in\mathbb{Z}
}}
\biggl|\Phi_{L}\biggl(\frac{b}{r}+\eta,\widetilde{\beta}\biggr)\biggr|
\biggl|\Phi_{M}\biggl(g^{N-M}\biggl(\frac{b}{r}+\eta\biggr),\beta\biggr)\biggr|.
\end{equation}
In this sum, by the assumption $R^{2}H\le 4g^{N}$,
we have
\[
|\eta|
\le
Hg^{-N}
\le
4R^{-2}.
\]
We can thus estimate \cref{lem:hybrid:after_Phi0_trivial} as
\begin{equation}
\label{lem:hybrid:after_Phi12_decomp}
S
\ll
g^{N-(L+M)}
\sum_{r\le R}
\astsum_{b\ \mod{r}}
\max_{|\epsilon|\le 4R^{-2}}
\biggl|\Phi_{L}\biggl(\frac{b}{r}+\epsilon,\widetilde{\beta}\biggr)\biggr|
S(b,r),
\end{equation}
where
\[
S(b,r)
\coloneqq
\sum_{\substack{
g^{N}|\eta|\le H\\
\frac{g^{N}b}{r}+g^{N}\eta\in\mathbb{Z}
}}
\biggl|\Phi_{M}\biggl(g^{N-M}\biggl(\frac{b}{r}+\eta\biggr),\beta\biggr)\biggr|.
\]
In the sum $S(b,r)$, we change variable from $\eta$ to $h\coloneqq\frac{g^{N}b}{r}+g^{N}\eta\in\mathbb{Z}$.
This gives
\begin{equation}
\label{lem:hybrid:inner_change_variable}
S(b,r)
=
\sum_{|h-\frac{g^{N}b}{r}|\le H}
\biggl|\Phi_{M}\biggl(\frac{h}{g^{M}},\beta\biggr)\biggr|.
\end{equation}
We then partition the sum over $h$ into the sums over subintervals of length $g^{M}$
and apply the $L^{1}$ bound (\cref{lem:L1_discrete}).
Since there are $\ll Hg^{-M}+1$ such subintervals,
we then obtain
\[
S(b,r)
\ll
(1+Hg^{-M})
\sum_{0\le h<g^{M}}
\biggl|\Phi_{M}\biggl(\frac{h}{g^{M}},\beta\biggr)\biggr|
\le
(1+Hg^{-M})
(C_{g}g)^{M}.
\]
On inserting this bound into \cref{lem:hybrid:after_Phi12_decomp}, we get
\begin{equation}
\label{lem:hybrid:after_inner_sum}
S
\ll
g^{N-L}
(1+Hg^{-M})
C_{g}^{M}
\sum_{r\le R}
\astsum_{b\ \mod{r}}
\max_{|\epsilon|\le 4R^{-2}}
\biggl|\Phi_{L}\biggl(\frac{b}{r}+\epsilon,\widetilde{\beta}\biggr)\biggr|.
\end{equation}
In \cref{lem:hybrid:after_inner_sum},
for each $(b,r)$, we can take a real number $\epsilon_{b,r}$ with
\[
\sup_{|\epsilon|\le R^{-2}}
\biggl|\Phi_{L}\biggl(\frac{b}{r}+\epsilon,\widetilde{\beta}\biggr)\biggr|
=
\biggl|\Phi_{L}\biggl(\frac{b}{r}+\epsilon_{b,r},\widetilde{\beta}\biggr)\biggr|
\quad\text{with}\quad
|\epsilon_{b,r}|\le 4R^{-2}.
\]
By classifying the terms according to which one of the intervals
\[
\biggl[\frac{i}{4}R^{-2},\frac{i+1}{4}R^{-2}\biggr]
\quad\text{for $i\in\{-16,-15,\ldots,+15,+15\}$}
\]
contains $\epsilon_{b,r}$, \cref{lem:hybrid:after_inner_sum} can be bounded as
\begin{equation}
\label{lem:hybrid:after_classify_epsilon}
S
\ll
g^{N-L}
(1+Hg^{-M})
C_{g}^{M}
\max_{|i|\le 16}
\sum_{r\le R}
\astsum_{b\ \mod{r}}
\max_{|\epsilon|\le\frac{1}{4}R^{-2}}
\biggl|\Phi_{L}\biggl(\frac{b}{r}+\frac{i}{4}R^{-2}+\epsilon,\widetilde{\beta}\biggr)\biggr|.
\end{equation}
By \cref{lem:large_sieve_estimate}, we arrive at
\begin{equation}
\label{lem:hybrid:before_optimization}
\begin{aligned}
S
&\ll
g^{N-L}(1+Hg^{-M})(g^{L}+R^{2})C_{g}^{M+L}\\
&\ll
g^{N}
\times
\max(1,R^{2}g^{-L})C_{g}^{L}
\times
\max(1,Hg^{-M})C_{g}^{M}.
\end{aligned}
\end{equation}
When
\begin{equation}
\label{lem:hybrid:optimization_cond}
\frac{C_{g}}{g}<1
\end{equation}
does not hold, the bound \cref{lem:hybrid:before_optimization} is weaker than the trivial bound
\[
S
\ll
g^{N}R^{2}H.
\]
We may thus assume \cref{lem:hybrid:optimization_cond} to optimize our $L,M$.
By $C_{g}>1$ and \cref{lem:hybrid:optimization_cond}, the general quantity
\begin{equation}
\label{lem:hybrid:general_optimization}
\max(1,Xg^{-K})C_{g}^{K}
\quad\text{with}\quad
X\ge1\ \text{and}\ K\in\mathbb{N}
\end{equation}
is decreasing for $g^{K}\le X$ and increasing for $g^{K}\ge X$.
Thus, \cref{lem:hybrid:general_optimization}
is minimized when $g^{K}\asymp X$.
Therefore, to minimize \cref{lem:hybrid:before_optimization},
we try to take $L,M$ so that
\[
g^{L}\asymp R^{2}
\and
g^{M}\asymp H.
\]
By the assumptions $R\ge1$, $H\ge 4g^{8}$ and $R^{2}H\le 4g^{N}$,
we can take $M\in\mathbb{Z}_{\ge6}$ with \cref{lem:hybrid:LM_range} such that
\begin{equation}
\label{lem:hybrid:M_choice}
4g^{M+2}\le H<4g^{M+3}.
\end{equation}
We shall then take $L\in\mathbb{N}$ by
\begin{equation}
\label{lem:hybrid:L_choice}
g^{L-3}\le R^{2}<g^{L-2}.
\end{equation}
Combined with \cref{lem:hybrid:M_choice},
this choice is in the range \cref{lem:hybrid:LM_range} since
\[
R\ge1
\ \text{and}\ 
4g^{L+M-1}
\le R^{2}H
\le 4g^{N}
\quad\text{so that}\quad
3\le L\le N-M+1
\ \text{and}\ 
6\le M\le N-2
\]
by the assumptions $R\ge1$ and $R^{2}H\le 4g^{N}$.
By \cref{lem:hybrid:before_optimization}, we then have
\begin{equation}
\label{lem:hybrid:prefinal}
S
\ll
g^{N}C_{g}^{M+L}.
\end{equation}
By \cref{lem:hybrid:M_choice} and \cref{lem:hybrid:L_choice},
we have
\[
M=\frac{\log H}{\log g}+O(1)
\and
L=\frac{\log R^{2}}{\log g}+O(1)
\]
and so, recalling the definition of $C_{g}$, \cref{lem:hybrid:prefinal} can be bounded as
\[
S
\ll
g^{N}
\exp\biggl(\frac{\log C_{g}}{\log g}\log R^{2}H\biggr)
=
g^{N}
(R^{2}H)^{\alpha_{g}}.
\]
This completes the proof.
\end{proof}

\section{Setting up the discrete circle method}
\label{sec:circle_method}
We now set up the discrete circle method for the proof of \cref{thm:reversed_prime_AP}.
We may assume $N$ to be large in terms of $g$ and $q\ge2$.
By the orthogonality of additive characters, we have
\begin{equation}
\label{circle_method:first_decomp}
\begin{aligned}
\rev{\pi}_{\!\!N}(a,q)
&=
\frac{1}{q}\sum_{0\le k<q}
e\biggl(-\frac{ka}{q}\biggr)
\sum_{p\in\mathscr{G}_{N}}
e\biggl(\frac{k\rev{p}}{q}\biggr)\\
&=
\frac{1}{q}
\sum_{\substack{
0\le k<q\\
q\mid (g^{2}-1)g^{N}k
}}
e\biggl(-\frac{ka}{q}\biggr)
\sum_{p\in\mathscr{G}_{N}}
e\biggl(\frac{k\rev{p}}{q}\biggr)
+
\frac{1}{q}
\sum_{\substack{
0\le k<q\\
q\nmid (g^{2}-1)g^{N}k
}}
e\biggl(-\frac{ka}{q}\biggr)
\sum_{p\in\mathscr{G}_{N}}
e\biggl(\frac{k\rev{p}}{q}\biggr).
\end{aligned}
\end{equation}
The divisibility condition can be rewritten as
\[
q\mid (g^{2}-1)g^{N}k
\iff
\frac{q}{(q,(g^{2}-1)g^{N})}\mathop{\bigg\vert} k.
\]
The very first term of the  right-hand side of \cref{circle_method:first_decomp} is then rewritten as
\begin{align}
&\frac{1}{q}
\sum_{\substack{
0\le k<q\\
q\mid (g^{2}-1)g^{N}k
}}
e\biggl(-\frac{ka}{q}\biggr)
\sum_{p\in\mathscr{G}_{N}}
e\biggl(\frac{k\rev{p}}{q}\biggr)\\
&=
\frac{1}{q}
\sum_{0\le k<(q,(g^{2}-1)g^{N})}
e\biggl(-\frac{ka}{(q,(g^{2}-1)g^{N})}\biggr)
\sum_{p\in\mathscr{G}_{N}}
e\biggl(\frac{k\rev{p}}{(q,(g^{2}-1)g^{N})}\biggr)\\
&=
\frac{(q,(g^{2}-1)g^{N})}{q}
\sum_{\substack{
p\in\mathscr{G}_{N}\\
\rev{p}\equiv a\ \mod{(q,(g^{2}-1)g^{N})}
}}
1
\end{align}
by using the orthogonality backwards. Thus, \cref{circle_method:first_decomp} implies
\[
\rev{\pi}_{\!\!N}(a,q)
=
\frac{(q,(g^{2}-1)g^{N})}{q}
\sum_{\substack{
p\in\mathscr{G}_{N}\\
\rev{p}\equiv a\ \mod{(q,(g^{2}-1)g^{N})}
}}1
+\rev{R}_{N}(a,q)
\]
with
\[
\rev{R}_{N}(a,q)
=
\frac{1}{q}
\sum_{\substack{
0\le k<q\\
q\nmid (g^{2}-1)g^{N}k
}}
e\biggl(-\frac{ka}{q}\biggr)
\sum_{p\in\mathscr{G}_{N}}
e\biggl(\frac{k\rev{p}}{q}\biggr).
\]
Since the exponential sums $F_{N}(\alpha,\beta)$ and $S_{N}(\alpha)$
are extended over integers in $[0,g^{N})$,
again by the orthogonality of additive characters, we have
\begin{equation}
\label{sec:circle_method:revR_expression}
\begin{aligned}
\rev{R}_{N}(a,q)
&=
\frac{1}{qg^{N}}
\sum_{\substack{
0\le k<q\\
q\nmid (g^{2}-1)g^{N}k
}}
e\biggl(-\frac{ka}{q}\biggr)
\sum_{0\le h<g^{N}}
\sum_{\substack{
1\le p<g^{N}
}}
\sum_{n\in\mathscr{G}_{N}}
e\biggl(\frac{k\rev{n}}{q}\biggr)
e\biggl(\frac{h(p-n)}{g^{N}}\biggr)\\
&=
\frac{1}{qg^{N}}
\sum_{\substack{
0\le k<q\\
q\nmid (g^{2}-1)g^{N}k
}}
e\biggl(-\frac{ka}{q}\biggr)
\sum_{0\le h<g^{N}}
S_{N}\biggl(\frac{h}{g^{N}}\biggr)
F_{N}\biggl(-\frac{h}{g^{N}},\frac{k}{q}\biggr).
\end{aligned}
\end{equation}

We now employ the Farey dissection.
Take real parameters $P,Q$ satisfying
\begin{equation}
\label{sec:Farey_dissection:PQ}
4g^{8}\le P\le Q.
\end{equation}
Let us denote the set of the Farey fractions in $[0,1]$ with denominator $\le Q$ by
\[
\mathscr{F}(Q)
\coloneqq
\{
(b,r)\in\mathbb{Z}^{2}
\mid
1\le r\le Q,\ 
0\le b\le r,\ 
(b,r)=1
\}.
\]
For $0\le h<g^{N}$, by Dirichlet's approximation, we can write
\begin{equation}
\label{sec:Farey:h_a_q_eta}
\frac{h}{g^{N}}
=
\frac{b}{r}+\eta
\quad\text{with}\quad
(b,r)\in\mathscr{F}(Q)
\ \text{and}\ 
|\eta|<\frac{1}{rQ}.
\end{equation}
Note that then
\[
\frac{g^{N}b}{r}
+
g^{N}\eta
=
h
\in\mathbb{Z}
\cap\biggl(\frac{g^{N}b}{r}-\frac{g^{N}}{rQ},\frac{g^{N}b}{r}+\frac{g^{N}}{rQ}\biggr)
\cap[0,g^{N}).
\]
The association
\[
\iota
\colon
\{
0\le h<g^{N}
\}
\to
\mathscr{I}
\coloneqq
\left\{
(b,r,\eta)
\midmid
\begin{gathered}
(b,r)\in\mathscr{F}(Q),\ 
|\eta|<\frac{1}{rQ},\ 
\frac{g^{N}b}{r}+g^{N}\eta\in\mathbb{Z}\cap[0,g^{N})
\end{gathered}
\right\}
\]
given by \cref{sec:Farey:h_a_q_eta} is a single-valued function.
Indeed, the value $(b,r,\eta)$ is uniquely determined by $h$
since $\frac{b}{r}$ and other Farey fractions are $\frac{1}{rQ}$ apart and $|\eta|<\frac{1}{rQ}$.
Also, this map is injective since $h$ is determined by $(b,r,\eta)$
by \cref{sec:Farey:h_a_q_eta}.
Let us introduce the major and minor arcs
\begin{align}
\mathscr{I}_{\mathfrak{M}}
&\coloneqq
\{
(b,r,\eta)\in\mathscr{I}
\mid
\max(r,g^{N}|\eta|)\le P
\},\\
\mathscr{I}_{\mathfrak{m}}
&\coloneqq
\{
(b,r,\eta)\in\mathscr{I}
\mid
\max(r,g^{N}|\eta|)>P
\}
\end{align}
so that $\mathscr{I}=\mathscr{I}_{\mathfrak{M}}\sqcup\mathscr{I}_{\mathfrak{m}}$.
We can then decompose \cref{sec:circle_method:revR_expression} as
\begin{equation}
\label{sec:Farey:revR_revRM_revRm}
|\rev{R}_{N}(a,q)|
\le
\rev{R}_{\mathfrak{M}}(q)
+
\rev{R}_{\mathfrak{m}}(q),
\end{equation}
where
\begin{align}
\rev{R}_{\mathfrak{M}}(q)
&\coloneqq
\frac{1}{qg^{N}}
\sum_{\substack{
0\le k<q\\
q\nmid (g^{2}-1)g^{N}k
}}
\sum_{(b,r,\eta)\in\mathscr{I}_{\mathfrak{M}}}
\biggl|
S_{N}\biggl(\frac{b}{r}+\eta\biggr)
F_{N}\biggl(-\biggl(\frac{b}{r}+\eta\biggr),\frac{k}{q}\biggr)
\biggr|,\\
\rev{R}_{\mathfrak{m}}(q)
&\coloneqq
\frac{1}{qg^{N}}
\sum_{\substack{
0\le k<q\\
q\nmid (g^{2}-1)g^{N}k
}}
\sum_{(b,r,\eta)\in\mathscr{I}_{\mathfrak{m}}}
\biggl|
S_{N}\biggl(\frac{b}{r}+\eta\biggr)
F_{N}\biggl(-\biggl(\frac{b}{r}+\eta\biggr),\frac{k}{q}\biggr)
\biggr|.
\end{align}
For a given $(b,r)\in\mathscr{F}(Q)$, we write
\begin{equation}
\mathscr{I}\biggl(\frac{b}{r}\biggr)
\coloneqq
\{
(b,r,\eta)\in\mathscr{I}
\},\quad
\mathscr{I}_{\mathfrak{M}}\biggl(\frac{b}{r}\biggr)
\coloneqq
\{
(b,r,\eta)\in\mathscr{I}_{\mathfrak{M}}
\}
\quad\text{and}\quad
\mathscr{I}_{\mathfrak{m}}\biggl(\frac{b}{r}\biggr)
\coloneqq
\{
(b,r,\eta)\in\mathscr{I}_{\mathfrak{m}}
\}.
\end{equation}

\section{Major arc}
\label{sec:major_arc}
We first bound the major arc contribution $\rev{R}_{\mathfrak{M}}(q)$.

\begin{proposition}
\label{prop:major_measure}
We have $\#\mathscr{I}_{\mathfrak{M}}\ll P^{3}$, where the implicit constant is absolute.
\end{proposition}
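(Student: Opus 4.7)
The plan is to count tuples $(b,r,\eta) \in \mathscr{I}_{\mathfrak{M}}$ directly, by first fixing the Farey fraction $(b,r)$ and then bounding the number of admissible perturbations $\eta$. This is essentially a lattice-point count in a short interval, and I do not expect any real obstacle beyond careful bookkeeping.

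First, since $\max(r, g^{N}|\eta|) \le P$ forces $r \le P$, the pair $(b,r)$ ranges over the subset of $\mathscr{F}(Q)$ with $r \le P$. The number of such pairs is trivially bounded by $\sum_{r \le P}(r+1) \ll P^{2}$ (one could tighten this using $\phi(r)$, but it is unnecessary for an $\ll$ bound).

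Second, for each fixed $(b,r)$, I would use the defining congruence condition of $\mathscr{I}$: setting $h \coloneqq \frac{g^{N}b}{r} + g^{N}\eta \in \mathbb{Z}$, the integer $h$ determines $\eta$ uniquely, and the major-arc constraint $g^{N}|\eta| \le P$ becomes $|h - \frac{g^{N}b}{r}| \le P$, which admits at most $2P+1 \ll P$ integers. Multiplying the two counts gives
\[
\#\mathscr{I}_{\mathfrak{M}} \ll P^{2}\cdot P = P^{3},
\]
with absolute implicit constant, as claimed. The whole argument uses only the defining conditions on $\mathscr{I}_{\mathfrak{M}}$ and the trivial estimate for the number of Farey fractions with bounded denominator; no analytic input is needed.
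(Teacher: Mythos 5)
Your proof is correct and follows essentially the same route as the paper: both change variables from $\eta$ to the integer $h=\frac{g^{N}b}{r}+g^{N}\eta$, bound the inner count by $\ll P$, and multiply by the trivial $\ll P^{2}$ bound on the number of Farey pairs with $r\le P$. Nothing to add.
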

\begin{proof}
Since the association $\eta\mapsto\frac{g^{N}b}{r}+g^{N}\eta\eqqcolon h$ is injective, we have
\[
\#\mathscr{I}_{\mathfrak{M}}
\le
\sum_{r\le P}
\sum_{\substack{
0\le b\le r\\
(b,r)=1
}}
\sum_{\substack{
g^{N}|\eta|\le P\\
\frac{g^{N}b}{r}+g^{N}\eta\in\mathbb{Z}
}}
1
\le
\sum_{r\le P}
\sum_{\substack{
0\le b\le r\\
(b,r)=1
}}
\sum_{|h-\frac{g^{N}b}{r}|\le P}
1
\ll
P
\sum_{r\le P}
\sum_{\substack{
0\le b\le r\\
(b,r)=1
}}
1
\ll
P^{3}
\]
as claimed.
\end{proof}

\begin{lemma}
\label{lem:major_arc}
For $N,q\in\mathbb{N}$, we have
\[
\rev{R}_{\mathfrak{M}}(q)
\ll
g^{N}P^{3}
\exp\biggl(-c_{\infty}\cdot\frac{N}{\log q}\biggr)
\]
with some constant $c_{\infty}=c_{\infty}(g)\in(0,1)$,
where the implicit constant depends only on $g$.
\end{lemma}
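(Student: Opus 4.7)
The plan is to reduce the bound on $\rev{R}_{\mathfrak{M}}(q)$ to a direct application of the $L^{\infty}$-bound proved in \cref{lem:L_infty_bound}, combined with the cardinality estimate from \cref{prop:major_measure}. The key observation is that the $L^{\infty}$-bound is already strong enough to beat any polynomial loss of size $P^{3}$, so we do not need to extract any cancellation from $S_{N}$ on the major arcs; a trivial estimate on $S_{N}$ together with the product structure of $F_{N}$ will suffice.

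Concretely, I would first apply the trivial bound $|S_{N}(b/r+\eta)|\le g^{N}$, since $S_{N}$ is a sum of at most $g^{N}$ terms of unit modulus. This exactly cancels the factor $g^{-N}$ in the definition of $\rev{R}_{\mathfrak{M}}(q)$, leaving
\[
\rev{R}_{\mathfrak{M}}(q)
\le
\frac{1}{q}
\sum_{\substack{0\le k<q\\ q\nmid(g^{2}-1)g^{N}k}}
\ \sum_{(b,r,\eta)\in\mathscr{I}_{\mathfrak{M}}}
\biggl|F_{N}\biggl(-\biggl(\frac{b}{r}+\eta\biggr),\frac{k}{q}\biggr)\biggr|.
\]
Next, \cref{prop:product_formula} gives $|F_{N}(-(b/r+\eta),k/q)|\le g^{2}|\Phi_{N}(-(b/r+\eta),k/q)|$, and I would swap the order of summation so that the sum over $k$ is innermost.

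For each fixed $(b,r,\eta)\in\mathscr{I}_{\mathfrak{M}}$, the inner sum over $k$ matches exactly the object controlled by \cref{lem:L_infty_bound} with $\ell=0$ and $\alpha\coloneqq-(b/r+\eta)$; note that the restriction $q\nmid(g^{2}-1)g^{N}k$ in the outer sum of $\rev{R}_{\mathfrak{M}}(q)$ is precisely the hypothesis of \cref{lem:L_infty_bound}, which is the only reason the major-arc estimate works at all. Thus
\[
\frac{1}{q}
\sum_{\substack{0\le k<q\\ q\nmid(g^{2}-1)g^{N}k}}
\biggl|\Phi_{N}\biggl(-\biggl(\frac{b}{r}+\eta\biggr),\frac{k}{q}\biggr)\biggr|
\ll
g^{N}\exp\biggl(-c_{\infty}\cdot\frac{N}{\log q}\biggr)
\]
uniformly in $(b,r,\eta)$. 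Summing over $(b,r,\eta)\in\mathscr{I}_{\mathfrak{M}}$ and applying \cref{prop:major_measure} to get $\#\mathscr{I}_{\mathfrak{M}}\ll P^{3}$, then absorbing the $g$-dependent constant $g^{2}$ into the implicit constant, yields
\[
\rev{R}_{\mathfrak{M}}(q)
\ll
g^{N}P^{3}\exp\biggl(-c_{\infty}\cdot\frac{N}{\log q}\biggr).
\]

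There is no real obstacle here: the entire argument is substitution and bookkeeping. The only conceptual point worth flagging is that the ``degenerate'' residue classes $q\mid(g^{2}-1)g^{N}k$ have already been extracted and moved into the expected main term in \cref{sec:circle_method}, so the outer $k$-sum lands in exactly the shape demanded by the $L^{\infty}$-bound and no further case analysis on $(k,q)$ or $(k,g^{2}-1)$ is needed.
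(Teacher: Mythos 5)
Your proposal is correct and is essentially the paper's own (one-line) proof spelled out: trivial bound $|S_{N}|\le g^{N}$, the $L^{\infty}$-bound of \cref{lem:L_infty_bound} applied to the inner $k$-sum uniformly in $(b,r,\eta)$, and the cardinality bound $\#\mathscr{I}_{\mathfrak{M}}\ll P^{3}$ from \cref{prop:major_measure}. The only step the paper leaves implicit that you make explicit is the reduction from $F_{N}$ to $\Phi_{N}$ via \cref{prop:product_formula}.
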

\begin{proof}
We may assume $N\ge4$.
By \cref{lem:L_infty_bound}, \cref{prop:major_measure} and the trivial estimate for $S_{N}$,
\begin{align}
\rev{R}_{\mathfrak{M}}(q)
&\ll
\frac{1}{g^{N}}
\cdot
P^{3}
\cdot
g^{N}
\cdot
g^{N}\exp\biggl(-c_{\infty}\cdot\frac{N}{\log q}\biggr)
=
g^{N}P^{3}\exp\biggl(-c_{\infty}\cdot\frac{N}{\log q}\biggr)
\end{align}
and so the assertion follows.
\end{proof}

\section{Minor arc}
\label{sec:minor_arc}
We next bound the minor arc contribution $\rev{R}_{\mathfrak{m}}(q)$.

\begin{lemma}
\label{lem:S_bound_pure}
For $b,r\in\mathbb{Z}$ with $r\ge 1$ and $(b,r)=1$
and $\eta\in\mathbb{R}$ with $|\eta|\le r^{-2}$, we have
\[
S_{N}\biggl(\frac{b}{r}+\eta\biggr)
\ll
(
g^{N}r^{-\frac{1}{2}}
+
g^{\frac{4}{5}N}
+
g^{\frac{1}{2}N}r^{\frac{1}{2}}
)
N^{4},
\]
where the implicit constant depends only on $g$.
\end{lemma}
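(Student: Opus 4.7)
This is the classical Vinogradov estimate for exponential sums over primes with the variable $x=g^{N}$. The plan is to follow the standard Vaughan identity route. First, by Abel/partial summation, replace $S_N(\alpha)$ by the von Mangoldt weighted sum
\[
T_N(\alpha)\coloneqq\sum_{n<g^{N}}\Lambda(n)e(\alpha n),
\]
losing only a factor $\ll N$ and absorbing the contribution of proper prime powers (which is $\ll g^{N/2}$) into the $g^{N/2}r^{1/2}$ term. Thus it suffices to prove the same bound, with $N^{4}$ replaced by $N^{3}$, for $T_N(\alpha)$.

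Next, apply Vaughan's identity with parameters $U,V$ to be chosen. This decomposes $T_{N}(\alpha)$, up to an $O(U+V)$ error, into a Type I sum
\[
\Sigma_{\mathrm{I}}
\ll
N\sum_{m\le UV}\Bigl|\sum_{n\le g^{N}/m}e(\alpha mn)\Bigr|
\]
and a Type II sum
\[
\Sigma_{\mathrm{II}}
\ll
N\max_{\substack{U\le M\le g^{N}/V}}\Bigl|\sum_{M<m\le 2M}\sum_{V<n\le g^{N}/m}a_{m}b_{n}e(\alpha mn)\Bigr|
\]
with $|a_{m}|\le\tau(m)$ and $|b_{n}|\le\log n$. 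For the Type I sum, one uses the inner geometric series bound $\bigl|\sum_{n\le X}e(\alpha mn)\bigr|\ll\min(X,\|\alpha m\|^{-1})$ and the standard lemma
\[
\sum_{h\le H}\min\bigl(X,\|\alpha h\|^{-1}\bigr)
\ll
\Bigl(\frac{H}{r}+1\Bigr)(X+r\log r),
\]
valid for $|\alpha-b/r|\le r^{-2}$ with $(b,r)=1$. This gives $\Sigma_{\mathrm{I}}\ll N^{2}(g^{N}/r+UV+g^{N}U^{-1}+r)$, i.e.\ the correct shape provided $UV\le g^{N}$.

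For the Type II sum, apply Cauchy--Schwarz in $m$ to square out $n$, then expand the inner square and apply the same geometric sum lemma on the diagonal variable $n-n'$. This yields
\[
\Sigma_{\mathrm{II}}^{2}\ll N^{C}(g^{N})^{2}\Bigl(\frac{1}{V}+\frac{1}{M}+\frac{M}{g^{N}}+\frac{r}{g^{N}}\Bigr),
\]
so $\Sigma_{\mathrm{II}}\ll N^{3}(g^{N}r^{-1/2}+g^{N}V^{-1/2}+g^{N}U^{-1/2}+g^{N/2}r^{1/2})$ on using $U\le M\le g^{N}/V$ together with $UV\le g^{N}$. Finally, optimize by setting $U=V=g^{2N/5}$, which balances $g^{N}U^{-1/2}$, $g^{N}V^{-1/2}$ and $UV$ all against $g^{4N/5}$, giving the claimed bound.

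The main obstacle is purely bookkeeping in the Type II estimate: one must apply Cauchy--Schwarz to the smooth variable (here $n$), dyadically localize $m\in(M,2M]$, and keep careful track of the number of representations of $n-n'$ to avoid losing any power of $N$ beyond $N^{4}$. No new arithmetic input beyond the Dirichlet approximation hypothesis $|\eta|\le r^{-2}$ is required; the argument is entirely a transcription of Vaughan's original proof with $x=g^{N}$ and is effective, so this lemma will combine with the hybrid $L^{1}$-bound (\cref{lem:hybrid}) to control the minor arc contribution $\rev{R}_{\mathfrak{m}}(q)$ in the next section.
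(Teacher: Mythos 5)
Your argument is correct and coincides with the paper's, which disposes of this lemma by citing Theorem~2.1 of Harman's book (the classical Vinogradov--Vaughan estimate for $\sum_{n\le x}\Lambda(n)e(\alpha n)$ under the hypothesis $|\alpha-b/r|\le r^{-2}$) and applying partial summation; your Vaughan-identity Type~I/Type~II derivation is exactly the standard proof of that cited theorem. (Minor transcription slip: your displayed Type~II bound omits the $1/r$ diagonal term that produces $g^{N}r^{-1/2}$, though your stated conclusion correctly restores it.)
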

\begin{proof}
Apply partial summation to Theorem~2.1 of \cite[p.~28]{Harman:text}.
\end{proof}

\begin{lemma}
\label{lem:S_bound_eta}
For $b,r\in\mathbb{Z}$ with $r\ge 1$ and $(b,r)=1$
and $\eta\in\mathbb{R}$ with $|\eta|\le r^{-2}$, we have
\[
S_{N}\biggl(\frac{b}{r}+\eta\biggr)
\ll
(
g^{\frac{1}{2}N}(r|\eta|)^{-\frac{1}{2}}
+
g^{\frac{4}{5}N}
+
g^{N}(r|\eta|)^{\frac{1}{2}}
)
N^{4},
\]
where the implicit constant depends only on $g$.
\end{lemma}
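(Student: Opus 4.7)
The plan is to reduce \cref{lem:S_bound_eta} to \cref{lem:S_bound_pure} by applying that lemma to a different rational approximation of $\alpha$, namely one whose denominator is of size $1/(r|\eta|)$. Since the right-hand side of \cref{lem:S_bound_eta} is infinite when $\eta = 0$, I may and will assume $\eta \neq 0$. I would then split the argument according to the size of $r^{2}|\eta|$.

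When $r^{2}|\eta| \ge \tfrac{1}{2}$, i.e.\ when $|\eta|$ is essentially of size $r^{-2}$, the quantities $(r|\eta|)^{\pm 1/2}$ are comparable up to absolute constants to $r^{\mp 1/2}$, so the bound claimed here coincides term-by-term (up to absolute constants) with the bound of \cref{lem:S_bound_pure} and nothing new needs to be proved. In the complementary range $r^{2}|\eta| < \tfrac{1}{2}$, I would appeal to Legendre's theorem on continued fractions: the hypothesis $|\eta| < 1/(2r^{2})$ together with $(b,r) = 1$ forces $b/r$ to appear as a convergent $p_{n}/q_{n}$ of $\alpha$. The assumption $\eta \neq 0$ guarantees $\alpha \neq b/r$, so $b/r$ is never the terminal convergent and the next convergent $(b',r') \coloneqq (p_{n+1}, q_{n+1})$ always exists; it is automatically reduced and satisfies $|\alpha - b'/r'| \le 1/(r')^{2}$, so \cref{lem:S_bound_pure} applies verbatim with $(b',r')$ in place of $(b,r)$.

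The quantitative input I need is the standard two-sided inequality for consecutive convergents
\[
\frac{1}{q_{n}(q_{n}+q_{n+1})} < \biggl|\alpha - \frac{p_{n}}{q_{n}}\biggr| \le \frac{1}{q_{n}q_{n+1}},
\]
which rearranges to $1/(r|\eta|) - r \le r' \le 1/(r|\eta|)$; combined with $r \le 1/(2r|\eta|)$ in this case, this yields $r' \asymp 1/(r|\eta|)$, so $(r')^{-1/2} \ll (r|\eta|)^{1/2}$ and $(r')^{1/2} \ll (r|\eta|)^{-1/2}$. Substituting these into \cref{lem:S_bound_pure} applied to $(b',r')$ then produces exactly the three terms in the bound to be proved. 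The only mildly delicate steps are verifying that $b/r$ is genuinely a convergent (which is precisely Legendre's criterion and is why the threshold $\tfrac{1}{2}$ is chosen) and pinning down $r' \asymp 1/(r|\eta|)$ from the convergent inequalities; neither is expected to present a real obstacle, so the whole argument reads as a single black-box invocation of \cref{lem:S_bound_pure} after a change of approximation.
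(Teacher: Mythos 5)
Your argument is correct, but it is not the route the paper takes: the paper's proof of \cref{lem:S_bound_eta} is a one-line citation of Lemma~4.2 of Maynard's paper (a Vinogradov-type estimate already phrased in terms of $r|\eta|$) followed by partial summation, exactly parallel to how \cref{lem:S_bound_pure} is obtained from Harman's Theorem~2.1. You instead deduce \cref{lem:S_bound_eta} from \cref{lem:S_bound_pure} by switching to the next continued-fraction convergent, and the details check out: when $r^{2}|\eta|\ge\tfrac{1}{2}$ one has $r|\eta|\asymp r^{-1}$ so the two bounds coincide term by term; when $0<r^{2}|\eta|<\tfrac{1}{2}$, Legendre's criterion makes $b/r$ a convergent $p_{n}/q_{n}$, the successor $(b',r')=(p_{n+1},q_{n+1})$ exists because $\eta\neq0$, is reduced, satisfies $|\alpha-b'/r'|\le(r')^{-2}$ since $q_{n+2}>q_{n+1}$ (or $\alpha=b'/r'$ exactly), and the two-sided inequality $\frac{1}{q_{n}(q_{n}+q_{n+1})}<|\eta|\le\frac{1}{q_{n}q_{n+1}}$ pins down $r'\asymp(r|\eta|)^{-1}$, which converts the three terms of \cref{lem:S_bound_pure} into the three terms claimed here. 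What your approach buys is self-containment — only one external exponential-sum estimate is needed instead of two, and the equivalence of the ``$r$-form'' and ``$r|\eta|$-form'' of the minor-arc bound is made explicit — at the cost of invoking continued-fraction machinery that the paper avoids entirely by citing a reference already in the desired shape. Either way the constants depend only on $g$ (in fact the conversion step is absolute), so the statement as claimed follows.
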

\begin{proof}
Apply partial summation to Lemma~4.2 of \cite{Maynard:PnP}.
\end{proof}

\begin{lemma}[Minor arc estimate]
\label{lem:minor_arc}
Let $N\in\mathbb{N}$ and $4g^{8}\le P\le Q\le g^{\frac{1}{2}N-4}$.
Assume that
\begin{equation}
\label{lem:minor_arc:g_cond}
\alpha_{g}<\frac{1}{5}
\quad\text{or, equivalently},\quad
g\ge31699.
\end{equation}
We then have
\[
\rev{R}_{\mathfrak{m}}(q)
\ll
g^{N}
(
P^{2\alpha_{g}-\frac{1}{2}}
+
g^{(\alpha_{g}-\frac{1}{5})N}
+
g^{\alpha_{g}N}Q^{-\frac{1}{2}}
)
N^{6}.
\]
\end{lemma}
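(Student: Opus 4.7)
The strategy is to combine the hybrid $L^{1}$-bound \cref{lem:hybrid} for $\Phi_{N}$ with the Vinogradov-type $L^{\infty}$-bounds \cref{lem:S_bound_pure,lem:S_bound_eta} for the prime exponential sum $S_{N}$, via a dyadic decomposition in the parameters $r$ and $g^{N}|\eta|$.

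First, I would apply \cref{prop:product_formula} to replace $|F_{N}|$ by $g^{2}|\Phi_{N}|$ inside $\rev{R}_{\mathfrak{m}}(q)$, and then partition $\mathscr{I}_{\mathfrak{m}}$ into dyadic rectangles $r\sim R$, $g^{N}|\eta|\sim H$ with $1\le R\le Q$, $RH\ll g^{N}/Q$, and $\max(R,H)>P$. Dyadic values of $H$ below $4g^{8}$ (which fall outside the range of \cref{lem:hybrid}) can be grouped into a single small-$H$ block handled by \cref{lem:hybrid} with $H=4g^{8}$, legitimate because $R^{2}\cdot 4g^{8}\le 4g^{N}$ follows from $R\le Q\le g^{N/2-4}$. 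This partition produces $O(N^{2})$ rectangles, contributing the extra $N^{2}$ factor on top of the $N^{4}$ coming from \cref{lem:S_bound_pure,lem:S_bound_eta}.

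On each dyadic rectangle I factor out $\max|S_{N}(\tfrac{b}{r}+\eta)|$ and apply \cref{lem:hybrid} with $\beta=k/q$ to the inner sum over $(b,r,\eta)$. Since the resulting bound $g^{N}(R^{2}H)^{\alpha_{g}}$ is uniform in $\beta$, the trivial sum over $0\le k<q$ yields a factor $q$ that cancels the $\tfrac{1}{q}$ in the definition of $\rev{R}_{\mathfrak{m}}(q)$, leaving $(\max|S_{N}|)\cdot(R^{2}H)^{\alpha_{g}}$ per rectangle. The remaining task is to sum over the dyadic rectangles in two cases: $R>P$ (with $H$ ranging over all allowed values) and $R\le P$ (which forces $H>P$).

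The most delicate point is choosing the right $S_{N}$-bound on each rectangle so that all three target terms emerge without loss. For the first term (yielding $g^{N}P^{2\alpha_{g}-1/2}$), I would apply \cref{lem:S_bound_eta}, giving $g^{N}(RH)^{-1/2}$; after multiplying by $(R^{2}H)^{\alpha_{g}}$ this becomes $g^{N}R^{2\alpha_{g}-1/2}H^{\alpha_{g}-1/2}$, which is summable at the smallest dyadic $H$ and the smallest allowed $R$ because $\alpha_{g}-1/2<0$ and $2\alpha_{g}-1/2<0$. For the third term (yielding $g^{N(1+\alpha_{g})}Q^{-1/2}$), I would instead use the $H$-independent bound $g^{N/2}R^{1/2}$ from \cref{lem:S_bound_pure}; the $H$-sum is then dominated at $H_{\max}\asymp g^{N}/(RQ)$, the $R$-sum at $R\sim Q$, and the hypothesis $Q\le g^{N/2-4}$ converts the resulting factor of $Q^{1/2}$ into the desired $Q^{-1/2}$. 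The middle term $g^{4N/5}$ combines with $(R^{2}H)^{\alpha_{g}}\le(g^{N})^{\alpha_{g}}$ (since $R\le Q$ and $RH\le g^{N}/Q$ give $R^{2}H\le g^{N}$) to yield $g^{(4/5+\alpha_{g})N}$. The main obstacle is to make these $H$-endpoint choices consistently across both cases: applying \cref{lem:S_bound_pure} to the first term would leave $Q^{-\alpha_{g}}P^{\alpha_{g}-1/2}$, strictly weaker than the target when $Q\gg P$, so the mixed strategy described above is essential.
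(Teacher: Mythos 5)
Your overall architecture is the paper's: dyadic decomposition in $r$ and $g^{N}|\eta|$ with a separate small-$|\eta|$ block, \cref{lem:hybrid} applied to the $\Phi_{N}$-sum uniformly in $\beta=k/q$ so that the $k$-sum cancels the $1/q$, and $N^{6}=N^{2}\cdot N^{4}$. The gap is your ``mixed strategy'' for $S_{N}$. From $S_{N}\ll A_{1}+A_{2}+A_{3}$ (\cref{lem:S_bound_pure}) and $S_{N}\ll B_{1}+B_{2}+B_{3}$ (\cref{lem:S_bound_eta}) you may conclude $S_{N}\ll\min(A_{1}+A_{2}+A_{3},\,B_{1}+B_{2}+B_{3})$, but not $S_{N}\ll B_{1}+A_{2}+A_{3}$: a cross-combination taking the first component from one lemma and the third from the other can be strictly smaller than both full bounds and is therefore not a legitimate upper bound for $S_{N}$. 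Concretely, on a rectangle with $R\sim Q$ and $H\sim g^{N}Q^{-2}$ your expression $g^{N}(RH)^{-1/2}+g^{4N/5}+g^{N/2}R^{1/2}$ undercuts each of the two valid bounds by a factor of roughly $H^{1/2}$ in the dominant component, so this step is genuinely unjustified rather than merely informal.

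The mixing is also unnecessary, and your claim that it is ``essential'' is where the reasoning goes wrong. On every dyadic block with $4g^{8}\le H\le 4g^{N}(RQ)^{-1}$, \cref{lem:S_bound_eta} applied \emph{in full} already produces all three target terms: its third component contributes $g^{N/2}(RH)^{1/2}(R^{2}H)^{\alpha_{g}}\ll g^{N/2}(g^{N}Q^{-1})^{1/2}(g^{N})^{\alpha_{g}}=g^{N}\cdot g^{\alpha_{g}N}Q^{-1/2}$ using $RH\le 4g^{N}Q^{-1}$ and $R^{2}H\le 4g^{N}$, which is exactly your third target term, so there is nothing to import from \cref{lem:S_bound_pure} there. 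On the small-$|\eta|$ block you must use \cref{lem:S_bound_pure} in full anyway, since $\eta$ may vanish and $(r|\eta|)^{-1/2}$ is then unusable (your $g^{N}(RH)^{-1/2}$ with nominal $H=4g^{8}$ is only justified, up to $g$-dependent constants, as the pure bound $g^{N}R^{-1/2}$); this is harmless because that block forces $R>P$, and the pure first component then gives $g^{N}R^{-1/2}(R^{2}\cdot 4g^{8})^{\alpha_{g}}\ll_{g}g^{N}R^{2\alpha_{g}-1/2}\le g^{N}P^{2\alpha_{g}-1/2}$. The only rectangles on which \cref{lem:S_bound_pure} genuinely fails are those with small $R$ and large $H$, where its first component times $(R^{2}H)^{\alpha_{g}}$ grows like $H^{\alpha_{g}}$ and can reach $g^{N}\cdot g^{\alpha_{g}N}Q^{-\alpha_{g}}$, exceeding the target since $\alpha_{g}<\tfrac{1}{2}$ --- but those are precisely the blocks where the full eta bound succeeds. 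Assigning one lemma in full to each block, as the paper does, closes the gap; the rest of your computation (the $H$- and $R$-endpoint analysis and the use of $Q\le g^{\frac{1}{2}N-4}$) then goes through as you describe.
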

\begin{proof}
We may assume $N$ to be large.
By classifying the size of $r$ dyadically, we have
\begin{equation}
\label{lem:minor_arc:Farey}
\rev{R}_{\mathfrak{m}}(q)
\ll
N
\sup_{\beta\in\mathbb{R}}
\sup_{1\le R\le Q}
S(R;\beta),
\end{equation}
where
\begin{equation}
S(R;\beta)
\coloneqq
\frac{1}{g^{N}}
\sum_{R/2\le r\le R}
\astsum_{b\ \mod{r}}
\sum_{(b,r,\eta)\in\mathscr{I}_{\mathfrak{m}}}
\biggl|
S_{N}\biggl(\frac{b}{r}+\eta\biggr)
F_{N}\biggl(-\biggl(\frac{b}{r}+\eta\biggr),\beta\biggr)
\biggr|.
\end{equation}
Note that the summations
\[
\astsum_{b\ \mod{r}}
\and
\sum_{\substack{
0\le b\le r\\
(b,r)=1
}}
\]
do not coincide strictly when $r=1$, but we doubled the first sum to cover the latter sum.
We shall thus bound $S(R;\beta)$ for $1\le R\le Q$.
We further decompose as
\begin{equation}
\label{lem:minor_arc:S_decomp}
S(R;\beta)
\ll
S_{0}(R;\beta)
+
N
\sup_{4g^{8}\le H\le4g^{N}(RQ)^{-1}}
S_{1}(R,H;\beta),
\end{equation}
where
\begin{align}
S_{0}(R;\beta)
&\coloneqq
\frac{1}{g^{N}}
\sum_{R/2\le r\le R}
\astsum_{b\ \mod{r}}
\sum_{\substack{
(b,r,\eta)\in\mathscr{I}_{\mathfrak{m}}\\
g^{N}|\eta|\le 4g^{8}
}}
\biggl|
S_{N}\biggl(\frac{b}{r}+\eta\biggr)
F_{N}\biggl(-\biggl(\frac{b}{r}+\eta\biggr),\beta\biggr)
\biggr|,\\
S_{1}(R,H;\beta)
&\coloneqq
\frac{1}{g^{N}}
\sum_{R/2\le r\le R}
\astsum_{b\ \mod{r}}
\sum_{\substack{
(b,r,\eta)\in\mathscr{I}_{\mathfrak{m}}\\
H/2\le g^{N}|\eta|\le H
}}
\biggl|
S_{N}\biggl(\frac{b}{r}+\eta\biggr)
F_{N}\biggl(-\biggl(\frac{b}{r}+\eta\biggr),\beta\biggr)
\biggr|.
\end{align}
Note that, in order to bound the range of $H$, we used the fact that when $S_{1}(R,H;\beta)\neq 0$ and $R\le Q$,
by taking a term counted in $S_{1}(R,H;\beta)$
and recalling the definition of $\mathscr{I}_{\mathfrak{m}}$, we have
\begin{equation}
\label{lem:minor_arc:S1:RQH1}
H/2\le g^{N}|\eta|\le g^{N}(rQ)^{-1}\le 2g^{N}(RQ)^{-1}
\quad\text{so that}\quad
R^{2}H\le RQH\le 4g^{N}.
\end{equation}
We bound the sums $S_{0}(R;\beta)$ and $S_{1}(R,H;\beta)$ separately.

For the sum $S_{0}(R;\beta)$,
by the definition of $\mathscr{I}_{\mathfrak{m}}$ and \cref{sec:Farey_dissection:PQ},
we have
\[
S_{0}(R;\beta)=0
\quad\text{if $R\le P$}.
\]
We may thus assume $P<R\le Q$. By the assumption $Q\le g^{\frac{1}{2}N-4}$,
the choice $H=4g^{8}$ gives
\[
R^{2}H
\le
4g^{8}Q^{2}
=
4g^{N}.
\]
We can thus use \cref{lem:hybrid} and \cref{lem:S_bound_pure} to get
\begin{align}
S_{0}(R;\beta)
&\ll
\frac{1}{g^{N}}
(
g^{N}R^{-\frac{1}{2}}
+
g^{\frac{4}{5}N}
+
g^{\frac{1}{2}N}R^{\frac{1}{2}}
)
N^{4}
\times
\sum_{r\le R}
\astsum_{b\ \mod{r}}
\sum_{\substack{
g^{N}|\eta|\le 4g^{8}\\
\frac{g^{N}b}{r}+g^{N}\eta\in\mathbb{Z}
}}
\biggl|
F_{N}\biggl(-\biggl(\frac{b}{r}+\eta\biggr),\beta\biggr)
\biggr|\\
&\ll
g^{N}
(
R^{2\alpha_{g}-\frac{1}{2}}
+
g^{-\frac{1}{5}N}R^{2\alpha_{g}}
+
g^{-\frac{1}{2}N}R^{2\alpha_{g}+\frac{1}{2}}
)
N^{4}.
\end{align}
Thus, by recalling $P<R\le Q$ and using \cref{lem:minor_arc:g_cond}, we have
\begin{equation}
\label{lem:minor_arc:S0}
S_{0}(R;\beta)
\ll
g^{N}
(
P^{2\alpha_{g}-\frac{1}{2}}
+
g^{-\frac{1}{5}N}Q^{2\alpha_{g}}
+
g^{-\frac{1}{2}N}Q^{2\alpha_{g}+\frac{1}{2}}
)
N^{4}.
\end{equation}
This completes the estimate of $S_{0}(R;\beta)$.

We next consider the sum $S_{1}(R,H;\beta)$.
When $S_{1}(R,H;\beta)\neq 0$ and $R\le Q$,
by taking a term counted in $S_{1}(R,H;\beta)$
and recalling the definition of $\mathscr{I}_{\mathfrak{m}}$,
we have
\begin{equation}
\label{lem:minor_arc:S1:RQH2}
R\le P
\implies
P<g^{N}|\eta|\le H
\end{equation}
besides \cref{lem:minor_arc:S1:RQH1}.
We may assume $R\ge1$ and $H\ge 4g^{8}$.
By \cref{lem:hybrid} and \cref{lem:S_bound_eta}, we then get
\begin{equation}
\label{lem:minor_arc:S1:general}
\begin{aligned}
S_{1}(R,H;\beta)
&\ll
\frac{1}{g^{N}}
(
g^{N}(RH)^{-\frac{1}{2}}
+
g^{\frac{4}{5}N}
+
g^{\frac{1}{2}N}(RH)^{\frac{1}{2}}
)
N^{4}\\
&\hspace{0.1\textwidth}
\times
\sum_{r\le R}
\astsum_{b\ \mod{r}}
\sum_{\substack{
g^{N}|\eta|\le H\\
\frac{g^{N}b}{r}+g^{N}\eta\in\mathbb{Z}
}}
\biggl|
F_{N}\biggl(-\biggl(\frac{b}{r}+\eta\biggr),\beta\biggr)
\biggr|\\
&\ll
g^{N}
(
(R^{2}H)^{\alpha_{g}}
(RH)^{-\frac{1}{2}}
+
g^{-\frac{1}{5}N}(R^{2}H)^{\alpha_{g}}
+
g^{-\frac{1}{2}N}
(R^{2}H)^{\alpha_{g}}
(RH)^{\frac{1}{2}}
)
N^{4}.
\end{aligned}
\end{equation}
When $R\le P$,
since $P\le H\le 4g^{N}(RQ)^{-1}$,
by using \cref{lem:minor_arc:g_cond}, \cref{lem:minor_arc:S1:RQH1} and \cref{lem:minor_arc:S1:RQH2}
in \cref{lem:minor_arc:S1:general}, we have
\begin{equation}
\begin{aligned}
S_{1}(R,H;\beta)
\ll
g^{N}
(
R^{2\alpha_{g}-\frac{1}{2}}P^{\alpha_{g}-\frac{1}{2}}
+
g^{-\frac{1}{5}N}(g^{N}RQ^{-1})^{\alpha_{g}}
+
g^{-\frac{1}{2}N}
(g^{N}RQ^{-1})^{\alpha_{g}}
(g^{N}Q^{-1})^{\frac{1}{2}}
)
N^{4}
\end{aligned}
\end{equation}
and so
\begin{equation}
\label{lem:minor_arc:S1:small_P}
\begin{aligned}
S_{1}(R,H;\beta)
\ll
g^{N}
(
P^{\alpha_{g}-\frac{1}{2}}
+
g^{(\alpha_{g}-\frac{1}{5})N}P^{\alpha_{g}}Q^{-\alpha_{g}}
+
g^{\alpha_{g}N}
P^{\alpha_{g}}
Q^{-(\alpha_{g}+\frac{1}{2})}
)
N^{4}
\quad\text{if $1\le R\le P$}.
\end{aligned}
\end{equation}
When $R>P$, since $4g^{8}\le H\le 4g^{N}(RQ)^{-1}$ by \cref{lem:minor_arc:S1:RQH1},
by using \cref{lem:minor_arc:g_cond},
we can bound \cref{lem:minor_arc:S1:general} as
\begin{equation}
\label{lem:minor_arc:S1:large_P}
\begin{aligned}
S_{1}(R,H;\beta)
&\ll
g^{N}
(
R^{2\alpha_{g}-\frac{1}{2}}
+
g^{-\frac{1}{5}N}(g^{N}RQ^{-1})^{\alpha_{g}}
+
g^{-\frac{1}{2}N}
(g^{N}RQ^{-1})^{\alpha_{g}}
(g^{N}Q^{-1})^{\frac{1}{2}}
)
N^{4}\\
&\ll
g^{N}
(
P^{2\alpha_{g}-\frac{1}{2}}
+
g^{(\alpha_{g}-\frac{1}{5})N}
+
g^{\alpha_{g}N}Q^{-\frac{1}{2}}
)
N^{4}
\quad\text{if $P<R\le Q$}.
\end{aligned}
\end{equation}
By combining \cref{lem:minor_arc:S1:small_P} and \cref{lem:minor_arc:S1:large_P} and noting that
\[
P^{\alpha_{g}-\frac{1}{2}}
\le
P^{2\alpha_{g}-\frac{1}{2}},\quad
g^{(\alpha_{g}-\frac{1}{5})N}P^{\alpha_{g}}Q^{-\alpha_{g}}
\le
g^{(\alpha_{g}-\frac{1}{5})N},\quad
g^{\alpha_{g}N}
P^{\alpha_{g}}
Q^{-(\alpha_{g}+\frac{1}{2})}
\le
g^{\alpha_{g}N}Q^{-\frac{1}{2}},
\]
we obtain
\begin{equation}
\label{lem:minor_arc:S1}
S_{1}(R,H;\beta)
\ll
g^{N}
(
P^{2\alpha_{g}-\frac{1}{2}}
+
g^{(\alpha_{g}-\frac{1}{5})N}
+
g^{\alpha_{g}N}Q^{-\frac{1}{2}}
)
N^{4}.
\end{equation}
So \(S_{1}(R,H;\beta)\) can be bounded similarly
for $R>P$ and $R\le P$.

On inserting \cref{lem:minor_arc:S0} and \cref{lem:minor_arc:S1}
into \cref{lem:minor_arc:S_decomp} by noting that
\[
g^{-\frac{1}{5}N}Q^{2\alpha_{g}}
\le
g^{(\alpha_{g}-\frac{1}{5})N}
\and
g^{-\frac{1}{2}N}
Q^{2\alpha_{g}+\frac{1}{2}}
\le
g^{(\alpha_{g}-\frac{1}{4})N}
\le
g^{(\alpha_{g}-\frac{1}{5})N},
\]
we have
\begin{equation}
S(R;\beta)
\ll
g^{N}
(
P^{2\alpha_{g}-\frac{1}{2}}
+
g^{(\alpha_{g}-\frac{1}{5})N}
+
g^{\alpha_{g}N}Q^{-\frac{1}{2}}
)
N^{5}.
\end{equation}
On inserting this estimate into \cref{lem:minor_arc:Farey}, we obtain the assertion.
\end{proof}

\begin{remark}
\label{rem:alpha_g_monotonic}
We can check $\alpha_{g}$ is decreasing for $g\ge9$ as follows. Write
\[
C(g)
\coloneqq
C_{g}
=
\frac{2}{\pi}\log\cot\frac{\pi}{2g}+\frac{1}{g\sin\frac{\pi}{2g}}+1.
\]
Then, we have
\[
\alpha_{g}
=
\frac{\log C(g)}{\log g}
\quad\text{and so}\quad
\frac{d\alpha_{g}}{dg}
=
\biggl(\frac{C'(g)}{C(g)}-\frac{\log C(g)}{g\log g}\biggr)\frac{1}{\log g}.
\]
For $g\ge9$, we have
\[
\log C(g)
\ge
\log\biggl(\frac{2}{\pi}\log\cot\frac{\pi}{2g}+\frac{1}{g\sin\frac{\pi}{2g}}+1\biggr)
\ge
\log\biggl(\frac{2}{\pi}\log\cot\frac{\pi}{18}+\frac{2}{\pi}+1\biggr)
=
1.008\ldots
>1
\]
and so it suffices to show
\begin{equation}
\label{rem:alpha_g_monotonic:goal}
C'(g)g\log g\le C(g)
\end{equation}
for $g\ge 9$. By using $\cot x\le\frac{1}{x}$ for $x\in[0,\frac{\pi}{2}]$, We have
\[
C'(g)g\log g
=
\biggl(
\frac{1}{g^{2}\cot\frac{\pi}{2g}\sin\frac{\pi}{2g}}
-
\biggl(1-\frac{\pi}{2g}\cot\frac{\pi}{2g}\biggr)
\frac{1}{g^{2}\sin\frac{\pi}{2g}}
\biggr)g\log g
\le
\frac{2\log g}{g\sin\frac{\pi}{g}}.
\]
By recalling $(\frac{\pi}{2}\log\cot\frac{\pi}{2}x)'=-\frac{2}{\sin\pi x}$, we then have
\[
C(g)
\ge
\frac{2}{\pi}\log\cot\frac{\pi}{2g}+1
=
\frac{2}{\pi}\log\cot\frac{\pi}{2g}-\frac{2}{\pi}\log\cot\frac{\pi}{4}+1
=
\int_{\frac{1}{g}}^{\frac{1}{2}}\frac{2x}{\sin\pi x}\frac{dx}{x}+1.
\]
Since $(\frac{\sin x}{x})=\frac{x-\tan x}{x^{2}}\cdot\cos x\le 0$
and $\sin x\ge\frac{2}{\pi}x$ for $x\in[0,\frac{\pi}{4}]$, we have
\[
C(g)
\ge
\frac{2\log g}{g\sin\frac{\pi}{g}}
+
1
-
\frac{2\log 2}{g\sin\frac{\pi}{g}}
\ge
\frac{2\log g}{g\sin\frac{\pi}{g}}
+
1
-
\log 2
\ge
\frac{2\log g}{g\sin\frac{\pi}{g}}
\ge
C'(g)g\log g
\]
as desired. We can check $\alpha_{31699}=0.1999997\ldots<\frac{1}{5}$ by some numerical calculation.
\end{remark}

\section{Proof of the main theorem and corollaries}
\label{sec:prf_main_thm}
We finally prove the main theorem and its corollaries.

\begin{proof}[Proof of \cref{thm:reversed_prime_AP}.]
In the above setting of the discrete circle method, we choose $P,Q$ by
\begin{equation}
\label{thm:reversed_prime_AP:PQ}
P
\coloneqq
\max\biggl(\exp\biggl(\frac{c_{\infty}}{4}\frac{N}{\log q}\biggr),4g^{8}\biggr)
\and
Q\coloneqq g^{\frac{1}{2}N-4},
\end{equation}
where $c_{\infty}\in(0,1)$ is a constant in \cref{lem:major_arc}.
By \cref{lem:major_arc}, we then have
\begin{equation}
\label{thm:reversed_prime_AP:RM}
\rev{R}_{\mathfrak{M}}(q)
\ll
g^{N}\exp\biggl(-\frac{c_{\infty}}{4}\frac{N}{\log q}\biggr).
\end{equation}
Also, by \cref{lem:minor_arc},
 noting that \cref{thm:reversed_prime_AP:g_cond} and \cref{thm:reversed_prime_AP:PQ}
assure the required conditions, we have
\begin{equation}
\rev{R}_{\mathfrak{m}}(q)
\ll
g^{N}
(
P^{2(\alpha_{g}-\frac{1}{4})}
+
g^{(\alpha_{g}-\frac{1}{5})N}
)
N^{6}
\ll
g^{N}N^{6}
\exp\biggl(-\widetilde{c}\cdot\frac{N}{\log q}\biggr)
\end{equation}
with some $\widetilde{c}=\widetilde{c}(g)\in(0,c_{\infty}]$.
Assuming
\begin{equation}
\label{thm:reversed_prime_AP:q_cond_in_proof}
q\le\exp\biggl(c\cdot\frac{N}{\log N}\biggr)
\quad\text{with}\quad
c\coloneqq\frac{\widetilde{c}}{12}\in\biggl[0,\frac{c_{\infty}}{4}\biggr],
\end{equation}
we have
\begin{align}
N^{6}
\exp\biggl(-\widetilde{c}\cdot\frac{N}{\log q}\biggr)
=
\exp\biggl(-\widetilde{c}\cdot\frac{N}{\log q}+6\log N\biggr)
\le
\exp\biggl(-\frac{\widetilde{c}}{2}\cdot\frac{N}{\log q}\biggr)
\le
\exp\biggl(-c\cdot\frac{N}{\log q}\biggr)
\end{align}
and so
\begin{equation}
\label{thm:reversed_prime_AP:Rm}
\rev{R}_{\mathfrak{m}}(q)
\ll
g^{N}
\exp\biggl(-c\cdot\frac{N}{\log q}\biggr).
\end{equation}
On inserting \cref{thm:reversed_prime_AP:RM} and \cref{thm:reversed_prime_AP:Rm}
into \cref{sec:Farey:revR_revRM_revRm}, we arrive at
\[
\rev{R}_{N}(a,q)
\ll
g^{N}
\exp\biggl(-c\cdot\frac{N}{\log q}\biggr)
\]
provided \cref{thm:reversed_prime_AP:q_cond_in_proof} is true. This completes the proof.
\end{proof}

\begin{proof}[Proof of \cref{cor:ZsiflawLegeis}.]
The case $q=1$ follows immediately by the prime number theorem and so we may assume $q\ge2$.
Since \cref{cor:ZsiflawLegeis:d_cond} is stronger than \cref{thm:reversed_prime_AP:d_cond},
we can use the conclusion of \cref{thm:reversed_prime_AP}.
Then, the remaining task is to evaluate the cardinality
\[
G
\coloneqq
\sum_{\substack{
p\in\mathscr{G}_{N}\\
\rev{p}\equiv a\ \mod{(q,(g^{2}-1)g^{N})}
}}
1.
\]
Note that the above congruence can be rephrased by the simultaneous congruences
\[
\rev{p}\equiv a\ \mod{(q,(g^{2}-1)g^{N})}
\iff
\left\{
\begin{aligned}
\rev{p}&\equiv a\ \mod{(q,g^{2}-1)},\\
\rev{p}&\equiv a\ \mod{(q,g^{N})}.
\end{aligned}
\right.
\]
by the Chinese remainder theorem since $(g^{2}-1,g)=1$.
As stated in \cref{rev_congruence}, we have
\begin{equation}
\label{cor:ZsiflawLegeis:g_pm_1_cong_rev}
\rev{p}\equiv a\ \mod{(q,g^{2}-1)}
\iff
p\equiv \overline{g}^{N-1}a\ \mod{(q,g^{2}-1)},
\end{equation}
where $\overline{g}\ \mod{(q,g^{2}-1)}$ is the multiplicative inverse of $g\ \mod{(q,g^{2}-1)}$,
since by using $g^{2}\equiv1\ \mod{g^{2}-1}$, we can see that the base-$g$ representation
\begin{equation}
\label{cor:ZsiflawLegeis:p_expansion}
p=\sum_{0\le i<N}p_{i}g^{i}
\quad\text{with}\quad
p_{0},\ldots,p_{N-1}\in\{0,\ldots,g-1\}
\ \text{and}\ 
p_{0},p_{N-1}\neq0
\end{equation}
of $p\in\mathscr{G}_{N}$ satisfies
\begin{align}
\rev{p}
&\equiv
\sum_{0\le i<N}p_{i}g^{N-i-1}
\equiv
g^{N-1}\sum_{0\le i<N}p_{i}g^{i}(g^{2})^{-i}
\equiv
g^{N-1}
\sum_{0\le i<N}p_{i}g^{i}
\equiv
g^{N-1}p\ \mod{g^{2}-1}.
\end{align}
We can thus rewrite $G$ as
\[
G
=
\sum_{\substack{
p\in\mathscr{G}_{N}\\
p\equiv\overline{g}^{N-1}a\ \mod{(q,g^{2}-1)}\\
\rev{p}\equiv a\ \mod{(q,g^{N})}
}}
1.
\]
When $(a,q,g^{2}-1)\neq1$, we have
\[
G
\le
\sum_{p\mid (a,q,g^{2}-1)}
1
\ll
1.
\]
We  may thus assume $(a,q,g^{2}-1)=1$.
Take the smallest $N_{0}\in\mathbb{N}$ such that
\[
(q,g^{N_{0}})=(q,g^{N}).
\]
By noting that $\rev{n}\not\equiv0\ \mod{g}$ for any $n\in\mathbb{N}$, we then have
\[
G
=
\sum_{\substack{
1\le v<g^{N_{0}}\\
v\equiv a\ \mod{(q,g^{N_{0}})}\\
v\not\equiv0\ \mod{g}
}}
\sum_{\substack{
p\in\mathscr{G}_{N}\\
p\equiv\overline{g}^{N-1}a\ \mod{(q,g^{2}-1)}\\
\rev{p}\equiv v\ \mod{g^{N_0}}
}}
1.
\]
When $g\mid(a,q)$, the above sum over $v$ is empty and so we may assume $g\nmid (a,q)$.
For the base-$g$ expansion \cref{cor:ZsiflawLegeis:p_expansion} of $p\in\mathscr{G}_{N}$,
we have
\[
\rev{p}\equiv v\ \mod{g^{N_0}}
\iff
\sum_{0\le i<N_{0}}p_{N-i-1}g^{i}
\equiv
v\ \mod{g^{N_{0}}}
\]
However, since $v$ and $\sum_{0\le i<N_{0}}p_{N-i-1}g^{i}$ both belong to $[0,g^{N_{0}})$, this is further equivalent to
\[
\rev{p}\equiv v\ \mod{g^{N_0}}
\iff
v
=\sum_{0\le i<N_{0}}p_{N-i-1}g^{i}
\iff
v_{i}=p_{N-i-1},
\]
where the $v_{i}$ are given by the base-$g$ representation of $v$ in the form
\[
v=\sum_{0\le i<N_{0}}v_{i}g^{i}
\quad\text{with}\quad
v_{0},\ldots,v_{N_{0}-1}\in\{0,\ldots,g-1\}
\ \text{and}\ 
v_{0}\neq0.
\]
Thus, we have $\rev{p}\equiv v\ \mod{g^{N_0}}$ and $p\in\mathscr{G}_{N}$ if and only if
\[
\sum_{N-N_{0}\le i<N}p_{i}g^{i}
=
\sum_{0\le i<N_{0}}p_{N-i-1}g^{N-i-1}
=
\sum_{0\le i<N_{0}}v_{i}g^{N-i-1}
\eqqcolon
v^{\ast}.
\]
This is further equivalent to
\[
p\in[v^{\ast},v^{\ast}+g^{N-N_{0}})
\]
since $v_{0}\neq0$ implies $v^{\ast}\ge g^{N-1}$.
We can thus further rewrite $G$ as
\[
G
=
\sum_{\substack{
1\le v<g^{N_{0}}\\
v\equiv u\ \mod{(q,g^{N_{0}})}\\
v\not\equiv0\ \mod{g}
}}
\sum_{\substack{
p\in[v^{\ast},v^{\ast}+g^{N-N_{0}})\\
p\equiv\overline{g}^{N-1}a\ \mod{(q,g^{2}-1)}
}}
1.
\]
By using some effective prime number theorem in arithmetic progressions with modulus $(q,g^{2}-1)\ll1$,
e.g.\ Corollary~11.12 and Theorem~11.17 of \cite{MV:text},
and using the approximation
\[
\int_{v^{\ast}}^{v^{\ast}+g^{N-N_{0}}}
\frac{dt}{\log t}
=
\frac{g^{N-N_{0}}}{\log g^{N}}
\biggl(1+O\biggl(\frac{1}{N}\biggr)\biggr)
\]
for $1\le v<N_{0}$ with $v\not\equiv0\ \mod{g}$, we have
\[
G
=
\frac{1}{\phi((q,g^{2}-1))}
\biggl(\sum_{\substack{
1\le v<g^{N_{0}}\\
v\equiv u\ \mod{(q,g^{N_{0}})}\\
v\not\equiv0\ \mod{g}
}}
1\biggr)
\frac{g^{N-N_{0}}}{\log g^{N}}
\biggl(1+O\biggl(\frac{1}{N}\biggr)\biggr)
+
O(g^{N+N_{0}}\exp(-c_{\textrm{PNT}}\sqrt{N})).
\]
Since
\begin{align}
\sum_{\substack{
1\le v<g^{N_{0}}\\
v\equiv a\ \mod{(q,g^{N_{0}})}\\
v\not\equiv0\ \mod{g}
}}
1
=
\sum_{\substack{
v\ \mod{g^{N_{0}}}\\
v\equiv a\ \mod{(q,g^{N_{0}})}\\
v\not\equiv0\ \mod{g}
}}
1
&=
\sum_{\substack{
v\ \mod{g^{N_{0}}}\\
v\equiv a\ \mod{(q,g^{N_{0}})}
}}
1
-
\sum_{\substack{
v\ \mod{g^{N_{0}}}\\
v\equiv a\ \mod{(q,g^{N_{0}})}\\
v\equiv0\ \mod{g}
}}
1\\
&=
\frac{g^{N_{0}}}{(q,g^{N_{0}})}
-
\sum_{\substack{
v\ \mod{g^{N_{0}-1}}\\
gv\equiv a\ \mod{(q,g^{N_{0}})}
}}
1\\
&=
\biggl(
1
-
\mathbbm{1}_{(q,g)\mid a}
\frac{(q,g)}{g}
\biggr)
\frac{g^{N_{0}}}{(q,g^{N_{0}})}\\
&=
\biggl(
1
-
\mathbbm{1}_{(q,g)\mid a}
\frac{(q,g)}{g}
\biggr)
\frac{g^{N_{0}}}{(q,g^{N})},
\end{align}
we obtain
\begin{equation}
\label{cor:ZsiflawLegeis:prefinal}
G
=
\frac{\rho_{g}(a,q)}{(q,(g^{2}-1)g^{N})}
\frac{g^{N}}{\log g^{N}}
\biggl(1+O\biggl(\frac{1}{N}\biggr)\biggr)
+
O(g^{N+N_{0}}\exp(-c_{\textrm{PNT}}\sqrt{N})).
\end{equation}
We then estimate $g^{N_{0}}$ in the remainder term.
For a prime number $p$ and a positive integer $n$,
define $v_{p}(n)\in\mathbb{Z}$ by $p^{v_{p}(n)}\mid n$ but $p^{v_{p}(n)+1}\nmid n$.
For any prime $p$, by \cref{cor:ZsiflawLegeis:d_cond}, we then have
\[
2^{v_{p}(q)}
\le
p^{v_{p}(q)}
\le
q
\le
\exp(c\sqrt{N})
\quad\text{and so}\quad
v_{p}(q)\le\frac{c}{\log 2}\sqrt{N}\le N
\]
by making $c<\log 2$. Then, we have
\[
(q,g^{[\frac{c}{\log 2}\sqrt{N}]})
=
\prod_{p\mid(q,g)}p^{\min(v_{p}(q),v_{p}(g)[\frac{c}{\log 2}\sqrt{N}])}
=
\prod_{p\mid(q,g)}p^{v_{p}(q)}
=
\prod_{p\mid(q,g)}p^{\min(v_{p}(q),v_{p}(g)N)}
=
(q,g^{N})
\]
and so $N_{0}\le\frac{c}{\log 2}\sqrt{N}$ by the minimality of $N_{0}$.
By making $c$ small enough so that
\[
g^{\frac{c}{\log 2}\sqrt{N}}
\le
\exp(\tfrac{1}{2}c_{\textrm{PNT}}\sqrt{N})
\quad\text{and}\quad
c<\tfrac{1}{2}c_{\textrm{PNT}},
\]
we then have
\[
g^{N+N_{0}}\exp(-c_{\textrm{PNT}}\sqrt{N})
\le
g^{N}\exp(-\tfrac{1}{2}c_{\textrm{PNT}}\sqrt{N})
\le
g^{N}\exp(-c\sqrt{N}).
\]
On inserting this into \cref{cor:ZsiflawLegeis:prefinal}
and combining it with \cref{thm:reversed_prime_AP},
we obtain the assertion.
\end{proof}

\begin{proof}[Proof of \cref{cor:Telhcirid}.]
Assume $(a,q,g^{2}-1)=1$ and $g\nmid(a,q)$.
If $(q,g)\mid a$, we cannot have $g\mid (q,g)$ since otherwise $g\mid (a,q)$.
We thus have
\[
\rho_{g}(a,q)
=
\biggl(
1
-
\mathbbm{1}_{(q,g)\mid a}
\frac{(q,g)}{g}
\biggr)
\prod_{p\mid (q,g^{2}-1)}\biggl(\frac{p}{p-1}\biggr)
\ge
\frac{1}{g}.
\]
Thus, the assertion follows by \cref{cor:ZsiflawLegeis}.
\end{proof}

\section*{Acknowledgments}
The first author would like to thank Lucile Devin, Didier Lesesvre,
Bruno Martin, Thi-Thu Nguyen and Martine Queffélec 
with whom she participated in the 2022--23
Lille-Calais {\it Groupe de Travail}  
on Maynard's paper \cite{Maynard:PnP}.
The second author would like to express his gratitude to Laboratoire Paul Painlev\'{e}
and Prof.~Daniel Duverney for their generous hospitality  in Lille
in October--November 2023 during which the main part of this work 
was carried out.
The second author was supported by JSPS KAKENHI Grant Number JP21K13772.

\bibliographystyle{amsplain}
\bibliography{Telhcirid}

\providecommand{\bysame}{\leavevmode\hbox to3em{\hrulefill}\thinspace}
\providecommand{\MR}{\relax\ifhmode\unskip\space\fi MR }
\providecommand{\MRhref}[2]{%
  \href{http://www.ams.org/mathscinet-getitem?mr=#1}{#2}
}
\providecommand{\href}[2]{#2}
\begin{thebibliography}{10}

\bibitem{BHS}
W.~D. Banks, D.~N. Hart, and M.~Sakata, \emph{{A}lmost all palindromes are composite}, Math.\ Res.\ Lett. \textbf{11} (2004), 853--868.

\bibitem{Col:PalindromeAP}
S.~Col, \emph{{P}alindromes dans les progressions arithm\'etiques}, Acta Arith. \textbf{137} (2009), no.~1, 1--41 (French).

\bibitem{DMRSS:ReversiblePrime}
C.~Dartyge, B.~Martin, J.~Rivat, I.~E. Shparlinski, and C.~Swaenepoel, \emph{{R}eversible primes}, ar{X}iv preprint (2023), \texttt{arXiv:2309.11380}.

\bibitem{Harman:text}
G.~Harman, \emph{Prime-{D}etecting {S}ieves}, London Math.\ Soc.\ Monographs, vol.~33, Princeton University Press, New Jersey, 2007.

\bibitem{MauduitRivat:carres}
C.~Mauduit and J.~Rivat, \emph{{L}a somme des chiffres des carr\'{e}s}, Acta Math. \textbf{203} (2009), 107--148.

\bibitem{MauduitRivat:Gelfond}
\bysame, \emph{{S}ur un probl\`{e}me de {G}elfond : la somme des chiffres des nombres premiers}, Ann.\ of Math. \textbf{171} (2010), 1591--1646.

\bibitem{Maynard:RestrictedDigitP}
J.~Maynard, \emph{{P}rimes with restricted digits}, Invent.\ Math. \textbf{217} (2019), 127--218.

\bibitem{Maynard:PnP}
\bysame, \emph{{P}rimes and polynomials with restricted digits}, Int.\ Math.\ Res.\ Not. \textbf{2022} (2022), no.~14, 10626--10648.

\bibitem{Montgomery:Topics}
H.~L. Montgomery, \emph{{T}opics in {M}ultiplicative {N}umber {T}heory}, Lecture Notes in Mathamatics, vol. 227, Springer-Verlag, 1971.

\bibitem{MV:text}
H.~L. Montgomery and R.~C. Vaughan, \emph{{M}ultiplicative {N}umber {T}heory: {I}.\ {C}lassical {T}heory}, {C}ambridge studies in advanced mathematics, vol.~97, Cambridge University Press, 2006.

\bibitem{TuxanidyPanario}
A.~Tuxanidy and D.~Panario, \emph{{I}nfinitude of palindromic almost-prime numbers}, ar{X}iv preprint (2023), \texttt{arXiv:2307.16637}.

\end{thebibliography}
\bigskip

\noindent
{\textsc{%
\small
Gautami Bhowmik\\[.3em]
\footnotesize
Laboratoire Paul Painlev\'{e}, Labex-CEMPI, Universit\'{e} de Lille\\
59655 Villeneuve d'Ascq Cedex, France.
}

\noindent
\small
\textit{Email address}: \texttt{gautami.bhowmik@univ-lille.fr}
}
\bigskip

\noindent
{\textsc{%
\small
Yuta Suzuki\\[.3em]
\footnotesize
Department of Mathematics, Rikkyo University,\\
3-34-1 Nishi-Ikebukuro, Toshima-ku, Tokyo 171-8501, Japan.
}

\noindent
\small
\textit{Email address}: \texttt{suzuyu@rikkyo.ac.jp}
}

\end{document}